\newtheorem*{lemma*}{Lemma}
\newtheorem{theorem}{Theorem}[section]
\newtheorem{proposition}[theorem]{Proposition}
\newtheorem{lemma}[theorem]{Lemma}
\newtheorem{corollary}[theorem]{Corollary}
\newtheorem{question}{Question}
\newtheorem*{question*}{Question}
\theoremstyle{definition}
\newtheorem*{claim*}{Claim}
\newtheorem{definition}[theorem]{Definition}
\theoremstyle{remark}
\newtheorem{remark}[theorem]{Remark}
\def\Xint#1{\mathchoice
	{\XXint\displaystyle\textstyle{#1}}%
	{\XXint\textstyle\scriptstyle{#1}}%
	{\XXint\scriptstyle\scriptscriptstyle{#1}}%
	{\XXint\scriptscriptstyle\scriptscriptstyle{#1}}%
	\!\int}
\def\XXint#1#2#3{{\setbox0=\hbox{$#1{#2#3}{\int}$ }
		\vcenter{\hbox{$#2#3$ }}\kern-.6\wd0}}
\def\dashint{\Xint-}
\newcommand{\ind}{\textup{Ind}}
\newcommand{\supp}{\textup{supp}}
\newcommand{\ev}{\textup{ev}}
\newcommand{\smooth}{\mathscr R}
\newcommand{\tr}{\mathrm{tr}}
\newcommand{\nDelta}{\varDelta}
\begin{document}
\title[Delocalized eta, algebraicity and $K$-theory]{Delocalized eta invariants, algebraicity, and $K$-theory of group $C^*$-algebras}

\author{Zhizhang Xie}
\address[Zhizhang Xie]{Department of Mathematics, Texas A\&M University}
\email{xie@math.tamu.edu}
\thanks{The first author is partially supported by NSF 1500823, NSF 1800737.}

\author{Guoliang Yu}
\address[Guoliang	 Yu]{Department of Mathematics, Texas A\&M University}
\email{guoliangyu@math.tamu.edu}
\thanks{The second author is partially supported by NSF 1700021.}

\maketitle

\begin{abstract}
In this paper, we establish a precise connection between higher rho invariants and delocalized eta invariants. Given an element in a discrete group, if its conjugacy class  has polynomial growth, then there is a natural trace map on the $K_0$-group of its group $C^\ast$-algebra. For each such trace map, we construct a determinant map on secondary higher invariants.  We show that, under the evaluation of this determinant map,  the image of a higher rho invariant is precisely the corresponding delocalized eta invariant of Lott.  As a consequence,  we show that if the Baum-Connes conjecture holds for a group, then Lott's delocalized eta invariants take values in algebraic numbers. We also generalize Lott's delocalized eta invariant to the case where the corresponding conjugacy class does \emph{not} have polynomial growth, provided that the strong Novikov conjecture holds for the group. 
\end{abstract}

\section{Introduction}

Let $X$ be a complete manifold  of dimension $n$ with a discrete group $\Gamma$ acting on it properly and cocompactly through isometries.  Each $\Gamma$-equivariant elliptic differential operator $D$ on $X$ gives rise to a higher index class  $\ind_\Gamma(D) \in K_n(C_r^\ast(\Gamma))$. This higher index is an obstruction to the invertibility  of $D$. It is a far-reaching generalization of the classical Fredholm index and plays a fundamental role in the studies of many problems in geometry and topology such as the Novikov conjecture, the Baum-Connes conjecture and the Gromov-Lawson-Rosenberg conjecture.  Higher index classes are often referred to as primary invariants. When the higher index class of an operator is trivial and given a specific trivialization,  a secondary index theoretic invariant naturally arises. One such example is the associated  Dirac operator $\widetilde D$ on the universal covering $\widetilde M$ of a closed spin manifold $M$, which is equipped with a positive scalar curvature metric $g$. In this case, it follows from the Lichnerowicz formula that the higher index of the Dirac operator vanishes. And there is a natural secondary higher invariant of $\widetilde D$ -- introduced by Higson and Roe  \cite{MR2220522,MR2220523,MR2220524, MR1399087} -- called the higher rho invariant of $\widetilde D$ (with respect to the metric $g$),  cf. Section $\ref{sec:roelocal}$ and  Section $\ref{sec:deloc}$ below for details.  This higher rho invariant is an obstruction to the inverse of the Dirac operator being local, and has important applications to geometry and topology.

On the other hand, for the same Dirac operator $\widetilde D$ above,  Lott introduced the following delocalized eta invariant $\eta_{\langle h\rangle}(\widetilde D)$ \cite{MR1726745}: 
\begin{equation}
\eta_{\langle h\rangle}(\widetilde D) \coloneqq  \frac{2}{\sqrt \pi} \int_{0}^{\infty}  \tr_h(\widetilde D e^{-t^2 \widetilde D^2})dt,  
\end{equation} 
 under the condition that the conjugacy class $\langle h\rangle$ of $h\in \pi_1M$ has polynomial growth. Here  $\pi_1 M$ is the fundamental group of $M$, and   $\tr_h$ is the following trace map (see Section $\ref{sec:deloc}$ for more details):   \[ \tr_h(A) = \sum_{g\in \langle h \rangle} \int_{\mathcal F} A(x, gx) dx   \] 
on $\Gamma$-equivariant Schwartz kernels $A\in C^\infty(\widetilde M\times \widetilde M)$, where $\mathcal F$ is a fundamental domain of $\widetilde M$ under the action of $\Gamma$.

In this paper, we shall devise a conceptual $K$-theoretic approach to establish a precise connection between Higson-Roe's $K$-theoretic higher rho invariants  and Lott's delocalized eta invariants.  More precisely, we have the following theorem. 
\begin{theorem}\label{thm:intro}
	Let $M$ be a closed odd-dimensional spin manifold equipped with a positive scalar curvature metric $g$. Suppose $\widetilde M$ is the universal cover of $M$, $\tilde g$ is the Riemannian metric on $\widetilde M$  lifted from $g$, and $\widetilde D$ is the associated Dirac operator on $\widetilde M$. Suppose the conjugacy class $\langle h\rangle$ of a non-identity element $h\in \pi_1M$ has polynomial growth,   then we have
	\[ \tau_h (\rho(\widetilde D, \widetilde g) ) = -\frac{1}{2}\eta_{\langle h\rangle}(\widetilde D),   \]
	where $\rho(\widetilde D, \widetilde g)$ is the $K$-theoretic  higher rho invariant of $\widetilde D$ with respect to the metric $\tilde g$, and $\tau_h$ is a canonical determinant map associated to $\langle h \rangle$. 
\end{theorem}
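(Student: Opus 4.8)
The plan is to identify both sides of the equality as avatars of the same $K$-theoretic object, namely the class of the path of invertibles (or unitaries) produced by the higher rho construction, evaluated against the trace $\tr_h$. First I would recall the explicit model for the higher rho invariant $\rho(\widetilde D,\widetilde g)$: since $g$ has positive scalar curvature, the Lichnerowicz formula makes $\widetilde D$ invertible, and $\rho(\widetilde D,\widetilde g)$ is represented in $K_1$ of the obstruction (localization) algebra by the path $t\mapsto u_t$, where $u_t = \exp\bigl(2\pi i \, \chi_t(\widetilde D)\bigr)$ for a suitable normalizing function $\chi_t$ that interpolates between a sharp cutoff and the identity as $t$ ranges over $[1,\infty)$ (equivalently one uses $\frac{1+F_t}{2}$ for a normalized path of operators $F_t$). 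The determinant map $\tau_h$ is the canonical map on $K_1$ of the relevant $C^\ast$-algebra built from the trace $\tr_h$ on the group $C^\ast$-algebra: concretely, for a smooth path of invertibles $u_t$ that is eventually constant, $\tau_h([u]) = \frac{1}{2\pi i}\int \tr_h\bigl(u_t^{-1} \dot u_t\bigr)\,dt$, the standard Karoubi-type determinant / the pairing with the degree-one cyclic cocycle coming from $\tr_h$. The polynomial growth hypothesis on $\langle h\rangle$ is exactly what guarantees $\tr_h$ is a continuous trace on a dense smooth subalgebra and that all the heat-kernel expressions below converge.

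Next I would carry out the computation of $\tau_h(\rho(\widetilde D,\widetilde g))$ directly from this formula. Plugging $u_t = \exp(2\pi i\,\chi_t(\widetilde D))$ into $\frac{1}{2\pi i}\int \tr_h(u_t^{-1}\dot u_t)\,dt$ and using that $\tr_h$ is a trace (so it kills commutators and behaves well under functional calculus), the integrand collapses to $\tr_h\bigl(\tfrac{d}{dt}\chi_t(\widetilde D)\bigr)$, and after integrating in $t$ and choosing the endpoints of $\chi_t$ appropriately, one is left with a single expression of the form $\tr_h\bigl(\psi(\widetilde D)\bigr)$ where $\psi$ is an odd function; choosing the standard Gaussian normalization $\chi_t$ built from $\mathrm{erf}$, $\psi(\widetilde D)$ becomes exactly $\frac{2}{\sqrt\pi}\int_0^\infty \widetilde D e^{-s^2\widetilde D^2}\,ds$ up to the constant $-\frac12$. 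Thus $\tau_h(\rho(\widetilde D,\widetilde g)) = -\frac12 \tr_h\Bigl(\frac{2}{\sqrt\pi}\int_0^\infty \widetilde D e^{-s^2\widetilde D^2}\,ds\Bigr) = -\frac12\eta_{\langle h\rangle}(\widetilde D)$, which is the claim. In other words, the matching of the two invariants is forced by the matching of the two formulas once one uses the heat-kernel representation of the normalizing function in the definition of the higher rho invariant.

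There are two places where care is genuinely needed, and the second of these is the main obstacle. The first, more routine, point is the interchange of the trace $\tr_h$ with the $t$-integral and with $\frac{d}{dt}$: this requires off-diagonal Gaussian-type estimates on the heat kernel of $\widetilde D$ (finite propagation speed plus Cheeger–Gromov–Taylor bounds) together with the polynomial-growth assumption on $\langle h\rangle$ to dominate the sum $\sum_{g\in\langle h\rangle}$; I would state the needed decay estimate as a lemma and quote it. The second and harder point is purely $K$-theoretic bookkeeping: one must verify that $\tau_h$ as a determinant map is \emph{well-defined} on the higher rho invariant, i.e.\ that it descends from the level of explicit paths to $K$-theory classes, independent of the choice of normalizing function $\chi_t$ and of the path's parametrization. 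This is precisely where the structure developed earlier in the paper — the construction of $\tau_h$ via the trace associated to a polynomial-growth conjugacy class and its compatibility with the boundary map / the six-term sequence relating the localization algebra to $C_r^\ast(\Gamma)$ — does the work, and I would invoke it rather than reprove it. Once $\tau_h$ is known to be a homomorphism on the correct $K$-group, the freedom to deform $\chi_t$ to the Gaussian normalization costs nothing, and the explicit computation above yields the stated identity; a sign/orientation check (the factor $-\tfrac12$ and the choice of $K_1$ versus $K_0$ conventions, odd-dimensional case) completes the proof.
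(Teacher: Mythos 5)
Your outline is essentially the paper's own strategy: represent $\rho(\widetilde D,\widetilde g)$ by the path $u(t)=\exp\bigl(2\pi i\,\tfrac{\varphi(t^{-1}\widetilde D)+1}{2}\bigr)$ with $\varphi$ the error-function normalizing function, pair against $\tr_h$ via the de la Harpe--Skandalis-type integral, and observe that the integrand collapses to $-\tfrac{1}{\sqrt\pi}\,\tr_h(\widetilde D e^{-s^2\widetilde D^2})$, yielding $-\tfrac12\eta_{\langle h\rangle}(\widetilde D)$. Your two flagged ``care points'' (convergence of $\tr_h$ against the heat kernel; independence of the normalizing function) are exactly where the paper spends its effort.

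One subtlety you touch on only obliquely, and which the paper treats as a genuine step rather than as an invocation of earlier well-definedness: the determinant $\tau_h$ is \emph{defined} by evaluating the integral only on a \emph{regularized representative} in the sense of Proposition~\ref{prop:reg} --- a path whose tail is $e^{2\pi i\frac{F(t-1)+1}{2}}$ for an $F$ with propagation tending to zero. The Gaussian path $\varphi(t^{-1}\widetilde D)$ has \emph{infinite} propagation for every $t$, so it is not such a representative, and plugging it into the integral formula is not, by fiat, computing $\tau_h$. The paper handles this by choosing a second normalizing function $\chi$ with compactly supported distributional Fourier transform (so that $\chi(t^{-1}\widetilde D)$ has shrinking propagation), splicing it onto $u$ over $[1,2]$ to produce a bona fide regularized representative $w$, checking that $u$ itself lies in the smooth subalgebra $\mathscr A_{L,0}(\widetilde M,\mathcal S)^\Gamma$ (Proposition~\ref{prop:schwartz} plus the spectral-radius estimate), and then running a transgression argument to show $\int\tr_h(\dot w\,w^{-1})=\int\tr_h(\dot u\,u^{-1})$. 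Describing this merely as ``the freedom to deform $\chi_t$ to the Gaussian normalization costs nothing'' undersells it: the convergence of each term in the transgression uses both the locality of $\tr_h$ (it kills small-propagation operators) and Lott's heat-kernel estimates, and without this step the computation proves a statement about the raw path rather than about the $K$-theory class. With that bridge made explicit, your argument is complete and agrees with the paper's.
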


While the definition of Lott's delocalized eta invariant requires certain growth conditions on $\pi_1 M$ (e.g. polynomial growth on a conjugacy class), the $K$-theoretic higher rho invariant can be defined in complete generality, without any growth conditions on $\pi_1 M$. We shall show how to generalize Lott's delocalized eta invariant without imposing any growth conditions on $\pi_1 M$, provided that the strong Novikov conjecture holds for $\pi_1 M$. This is achieved by using the Novikov rho invariant introduced in \cite[Section 7]{Weinberger:2016dq}.

As an application of Theorem $\ref{thm:intro}$ above, we have the following algebraicity result concerning the values of delocalized eta invariants.

\begin{theorem}
	With the same notation as above, if the rational Baum-Connes conjecture holds for $\Gamma$, and the conjugacy class $\langle h\rangle$ of a non-identity element $h\in \Gamma$ has polynomial growth,  then
	the delocalized eta invariant $\eta_{\langle h\rangle}(\widetilde D)$ is an algebraic number.  Moreover, if in addition $h$ has infinite order, then  $\eta_{\langle h\rangle}(\widetilde D)$ vanishes.   
\end{theorem}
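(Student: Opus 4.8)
The plan is to route everything through Theorem~\ref{thm:intro}, reduce the question to the value of a determinant on $K_1$ of the group $C^*$-algebra, and then feed in the rational Baum--Connes isomorphism.

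\textbf{Step 1 (reduction to $K_*(C^*_r(\Gamma))$).} By Theorem~\ref{thm:intro}, $\eta_{\langle h\rangle}(\widetilde D)=-2\,\tau_h\big(\rho(\widetilde D,\widetilde g)\big)$, so it suffices to understand $\tau_h(\rho(\widetilde D,\widetilde g))$. Since $(M,g)$ has positive scalar curvature, $\widetilde D$ is invertible, and hence the rho class $\rho(\widetilde D,\widetilde g)\in K_*(C^*_{L,0}(\widetilde M)^\Gamma)$ has vanishing image in $K_*(C^*_{L}(\widetilde M)^\Gamma)\cong K_*^\Gamma(\widetilde M)$ (the ``localized'' index of an invertible operator is trivial). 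By exactness of the long exact sequence attached to $C^*_{L,0}(\widetilde M)^\Gamma\subset C^*_L(\widetilde M)^\Gamma$, there is a class $y\in K_1(C^*_r(\Gamma))$ (the relevant degree when $\dim M$ is odd) with $\rho(\widetilde D,\widetilde g)=\partial(y)$, where $\partial$ is the connecting homomorphism. By construction, $\tau_h$ transgresses the delocalized trace on $C^*_r(\Gamma)$ to the structure group; in particular $\tau_h\circ\partial=\det_h$, where $\det_h\colon K_1(C^*_r(\Gamma))\to\mathbb{C}$ is the Fuglede--Kadison-type determinant homomorphism associated with that trace. Thus $\eta_{\langle h\rangle}(\widetilde D)=-2\,\det_h(y)$, and the whole problem is now about the values of $\det_h$ on $K_1(C^*_r(\Gamma))$.

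\textbf{Step 2 (rational Baum--Connes).} The rational Baum--Connes conjecture asserts that the assembly map $\mu\colon K_*^\Gamma(\underline{E}\Gamma)\otimes\mathbb{Q}\to K_*(C^*_r(\Gamma))\otimes\mathbb{Q}$ is an isomorphism; in particular $\operatorname{coker}\mu$ is torsion, so there exist $N\geq 1$ and $z\in K_1^\Gamma(\underline{E}\Gamma)$ with $Ny=\mu(z)$, and, since $\underline{E}\Gamma$ is an increasing union of cocompact $\Gamma$-CW complexes with finite isotropy, $z$ may be taken on such a finite model $X$, so that $z\in K_1^\Gamma(X)$ with $K_1^\Gamma(X)$ finitely generated. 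One then computes $\det_h(\mu(z))$ through the delocalized Chern--Connes character: after complexification, $\det_h\circ\mu$ is the pairing against the delocalized cyclic cocycle of $\langle h\rangle$, which factors through the $\langle h\rangle$-component of the equivariant Chern character of $K_*^\Gamma(\underline{E}\Gamma)$; this component is supported on \emph{finite-order} conjugacy classes, and over the finite isotropy groups of $X$ it is governed by the representation theory of finite cyclic groups.

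\textbf{Step 3 (conclusion).} If $h$ has infinite order, then $\langle h\rangle$ is not the conjugacy class of a torsion element, so the relevant Chern-character component vanishes and $\det_h\circ\mu=0$ after complexification; from $Ny=\mu(z)$ we get $N\det_h(y)=\det_h(\mu(z))=0$ in $\mathbb{C}$, hence $\det_h(y)=0$ and $\eta_{\langle h\rangle}(\widetilde D)=0$. If $h$ has finite order $m$, localization at the $\langle h\rangle$-component over the finite stabilizers of $X$ expresses $\det_h(\mu(z))$ as a finite $\mathbb{Z}$-linear combination of characters of finite cyclic groups evaluated at $m$-th roots of unity, so $\det_h(\mu(z))\in\mathbb{Q}(\zeta_m)$; then $\det_h(y)=\tfrac1N\det_h(\mu(z))\in\mathbb{Q}(\zeta_m)$ and $\eta_{\langle h\rangle}(\widetilde D)=-2\det_h(y)$ is an algebraic number.

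\textbf{Main obstacle.} The geometric inputs — Theorem~\ref{thm:intro} and the vanishing of the localized index of $\widetilde D$ — are routine. The real content is Step~2: identifying, under the rational Baum--Connes isomorphism, the $C^*$-algebraic delocalized trace (well-defined on a dense smooth subalgebra precisely because $\langle h\rangle$ has polynomial growth) with projection onto the $\langle h\rangle$-component of the equivariant Chern character of $K_*^\Gamma(\underline{E}\Gamma)$, and showing that this component lives on finite-order conjugacy classes with cyclotomic values on the finite isotropy. Setting up this compatibility between the analytic trace, the assembly map, and the delocalized Chern character — together with the reduction to finite $\Gamma$-CW models — is where essentially all the work lies.
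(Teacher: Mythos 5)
Your overall strategy — reduce via Theorem \ref{thm:intro} to understanding the values of $\tau_h$, then use the rational Baum--Connes isomorphism to trade $K_1(C_{L,0}^\ast)$-classes for classes in the $K$-theory of $C_r^\ast(\Gamma)$, then appeal to a Lefschetz-type computation over finite isotropy — is the right one and matches the paper's. But Step~1 contains a genuine error that would sink the argument as written.

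You claim that, because $\widetilde D$ is invertible, $\rho(\widetilde D,\widetilde g)$ has \emph{vanishing image} in $K_\ast(C_L^\ast(\widetilde M)^\Gamma)\cong K_\ast^\Gamma(\widetilde M)$. This is false. The image of $\rho(\widetilde D,\widetilde g)$ under $K_1(C_{L,0}^\ast(\widetilde M)^\Gamma)\to K_1(C_L^\ast(\widetilde M)^\Gamma)$ is the \emph{local} index class, i.e.\ the $\Gamma$-equivariant $K$-homology class $[\widetilde D]\in K_1^\Gamma(\widetilde M)$, and this is generically nonzero (it is the spin fundamental class). What positive scalar curvature gives you is that the image one step further along, in $K_1(C^\ast(\widetilde M)^\Gamma)\cong K_1(C_r^\ast(\Gamma))$, is zero — this is the vanishing of the higher index, not of the local index. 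So exactness at $K_1(C_{L,0}^\ast(\widetilde M)^\Gamma)$ does not by itself produce a class $y$ with $\partial(y)=\rho$; in fact, without any further hypothesis $\rho$ need not lie in $\operatorname{im}\partial$. This is exactly where rational Baum--Connes enters, not in Step~2: under rational BC, the map $K_1(C_L^\ast(E\Gamma)^\Gamma)\otimes\mathbb Q\to K_1(C_r^\ast(\Gamma))\otimes\mathbb Q$ is injective, so after pushing $\rho$ forward to $K_1(C_{L,0}^\ast(E\Gamma)^\Gamma)\otimes\mathbb Q$ and using that its image in $K_1(C_r^\ast(\Gamma))\otimes\mathbb Q$ vanishes, one deduces that $\rho$ rationally lies in $\operatorname{im}\partial$ (equivalently, that $\partial\colon K_0(C_r^\ast(\Gamma))\otimes\mathbb Q\to K_1(C_{L,0}^\ast(E\Gamma)^\Gamma)\otimes\mathbb Q$ is surjective). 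Your proposal skips this passage to $E\Gamma$ and in doing so asserts something false.

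Two secondary issues. First, your degrees are consistently off by one: with $\dim M$ odd, $\partial$ maps $K_0(C_r^\ast(\Gamma))\to K_1(C_{L,0}^\ast)$, so the preimage $y$ lives in $K_0$, and the relevant map is the trace $\tr_h\colon K_0(C_r^\ast(\Gamma))\to\mathbb C$ (Lemma \ref{lm:bdry} gives $\tau_h\circ\partial=-\tr_h$), not a Fuglede--Kadison-type determinant on $K_1$. Second, Step~3 gestures at ``characters of finite cyclic groups evaluated at roots of unity'' but the paper's mechanism is more concrete: the Baum--Douglas model represents every class in $K_0^\Gamma(\underline E\Gamma)$ as a twisted Dirac index $\ind_\Gamma(D_E)$, and the $L^2$-Lefschetz fixed point theorem (Corollary \ref{cor:alg}) computes $\tr_h(\ind_\Gamma(D_E))$ as an integral of characteristic classes over the fixed-point set, whose rotation angles are of the form $2k\pi/n$ with $n$ the order of $h$; this directly lands in $\mathbb Q_\Gamma$, and vanishes when $h$ has infinite order because then the fixed-point set is empty. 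Your Chern-character heuristic points at the same phenomenon but needs this explicit input to yield the algebraicity claim.
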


This theorem follows from the construction of the determinant map $\tau_h$ and a $L^2$-Lefschetz fixed point theorem of B.-L. Wang and H. Wang  \cite[Theorem 5.10]{MR3454548}.  When $\Gamma$ is torsion-free and satisfies the Baum-Connes conjecture,  and the conjugacy class $\langle h\rangle$ of a non-identity element $h\in \Gamma$ has polynomial growth, Piazza and Schick have proved the vanishing of  $\eta_{\langle h\rangle}(\widetilde D)$ by a different method \cite[Theorem 13.7]{MR2294190}. 

In light of this algebraicity result, we propose the following question.

\begin{question*}
	What values can delocalized eta invariants take in general? Are they always algebraic numbers?
\end{question*}

In particular, if a delocalized eta invariant is transcendental, then it will lead to a counterexample to the Baum-Connes conjecture \cite{PBAC88, BCH94, AC94}. Note that the above question is a reminiscent of Atiyah's question concerning rationality of $\ell^2$-Betti numbers \cite{MR0420729}. Atiyah's question was answered in negative by Austin, who showed that $\ell^2$-Betti numbers can be transcendental \cite{MR3149852}. 

As another application of Theorem $\ref{thm:intro}$,  we give a $K$-theoretic proof of a version of the delocalized  Atiyah-Patodi-Singer index theorem. See Proposition $\ref{thm:aps}$ below. 

Some of the main results in this paper are inspired by previous work of Lott, \mbox{Leichtnam}, Piazza and Schick \cite{JLott92, MR1726745}\cite{MR2294190}\cite{ELPP01}. A key new ingredient of our approach is the construction of an explicit determinant map $\tau_h$  on $K_1(C_{L,0}^\ast(\widetilde M)^{\pi_1 M})$ for each non-identity conjugacy class $\langle h \rangle$ with polynomial growth. The definition of $K_1(C_{L,0}^\ast(\widetilde M)^{\pi_1 M})$ is reviewed in Section $\ref{sec:roelocal}$ below.  Each such determinant map is induced by the corresponding trace map $\tr_h$ on $K_0(C_r^\ast(\pi_1 M))$, and our construction is inspired by the work of de la Harpe and Skandalis \cite{MR743629} and  Keswani \cite{MR1763959}. In fact, combined with finite propagation speed of wave operators, our  $K$-theoretic approach above can also be used to give a uniform treatment of various vanishing results and homotopy invariance results for delocalized eta variants in \cite{MR952817, MR1763959, MR1794283, MR2294190, MR2761858, MR3296587}. See a brief discussion in Remark $\ref{rk:app}$ below.   We will  present the details in a separate paper \cite{tang-xie-yao-yu2}. 

One can use the same techniques developed in this paper to show that the analogues of  Theorem $\ref{thm:intro}$ and Proposition $\ref{thm:aps}$ hold for  hyperbolic groups,  if one uses Puschnigg's smooth dense subalgebras \cite{MR2647141}.  In fact, from the viewpoint of cyclic cohomology, the various traces considered in this paper are degree zero cyclic cocycles in the cyclic cohomology of the corresponding group algebra. For example, we can restate Theorem $\ref{thm:intro}$ by saying that Lott's delocalized eta invariants equal  the pairings between Higson-Roe's higher rho invariants and degree zero cyclic cocycles. The main results in this paper have natural analogues for higher degree cyclic cocycles. We shall apply the techniques from this paper to investigate the pairings between higher rho invariants and  cyclic cocycles of higher degrees in a sequel paper, where in particular we will show that analogues of the main results of this paper hold for higher degree cyclic cocycles, if $\pi_1 M$ has polynomial growth or $\pi_1 M$ is hyperbolic.

The paper is organized as follows. In Section $\ref{sec:conjpol}$, we recall some basic definitions of certain geometric $C^\ast$-algebras. Given a discrete group, for each conjugacy class with polynomial growth, we review how to extend a trace on the group algebra to the Connes-Moscovici smooth dense subalgebra of the corresponding reduced group $C^\ast$-algebra.  In Section $\ref{sec:second}$, we then use this extended trace map to define an explicit determinant map on secondary higher invariants. In Section $\ref{sec:deloc}$, we establish a precise connection between Higson-Roe's $K$-theoretic  higher rho invariants and Lott's delocalized eta invariants. We then apply it in Section $\ref{sec:bcalg}$ to prove an algebraicity result concerning the values of delocalized eta invariants and a version of delocalized Atiyah-Patodi-Singer index theorem. 

We would like to thank the referees for helpful comments. 
 
\section{Conjugacy classes with polynomial growth and extension of traces}\label{sec:conjpol}

Let $\Gamma$ be a discrete group and $\mathbb C\Gamma$ the corresponding group algebra. For each $h \in \Gamma$, there  is  a natural trace map on $\mathbb C\Gamma$ defined as follows: 
\[ \tr_h( a) = \sum_{g\in \langle h \rangle} a_g  \]
where $\langle h\rangle $ is the conjugacy class of $h$  and $a = \sum_{g\in \Gamma}a_g g\in \mathbb C\Gamma$. In this section, we give a brief construction on how to extend this trace to a smooth dense subalgebra of the reduced group $C^\ast$-algebra $C^\ast_r(\Gamma)$,  provided that  $\langle h\rangle $ has polynomial growth. In particular, such a trace map induces a map on $K_0(C_r^\ast(\Gamma))$. We shall use this induced map on $K_0(C_r^\ast(\Gamma))$ to define a determinant map on secondary higher invariants in the next section.

\subsection{Roe algebras and localization algebras}\label{sec:roelocal}
In this subsection, we briefly recall some standard definitions of certain geometric $C^\ast$-algebras. We refer the reader to \cite{MR1147350, MR1451759} for more details. Let $X$ be a proper metric space. That is, every closed ball in $X$ is compact. An	$X$-module is a separable Hilbert space equipped with a	$\ast$-representation of $C_0(X)$, the algebra of all continuous functions on $X$ which vanish at infinity. An	$X$-module is called nondegenerate if the $\ast$-representation of $C_0(X)$ is nondegenerate. An $X$-module is said to be standard if no nonzero function in $C_0(X)$ acts as a compact operator. 
\begin{definition}
	Let $H_X$ be a $X$-module and $T$ a bounded linear operator acting on $H_X$. 
	\begin{enumerate}[(i)]
		\item The propagation of $T$ is defined to be $\sup\{ d(x, y)\mid (x, y)\in \supp(T)\}$, where $\supp(T)$ is  the complement (in $X\times X$) of the set of points $(x, y)\in X\times X$ for which there exist $f, g\in C_0(X)$ such that $gTf= 0$ and $f(x)\neq 0$, $g(y) \neq 0$;
		\item $T$ is said to be locally compact if $fT$ and $Tf$ are compact for all $f\in C_0(X)$; 
		\item $T$ is said to be pseudo-local if $[T, f]$ is compact for all $f\in C_0(X)$.  
	\end{enumerate}
\end{definition}

\begin{definition}
	Let $H_X$ be a standard nondegenerate $X$-module and $\mathcal B(H_X)$ the set of all bounded linear operators on $H_X$.  
	\begin{enumerate}[(i)]
		\item The Roe algebra of $X$, denoted by $C^\ast(X)$, is the $C^\ast$-algebra generated by all locally compact operators with finite propagations in $\mathcal B(H_X)$.
		\item $D^\ast(X)$ is the $C^\ast$-algebra generated by all pseudo-local operators with finite propagations in $\mathcal B(H_X)$. In particular, $D^\ast(X)$ is a subalgebra of the multiplier algebra of $C^\ast(X)$.
		\item $C_L^\ast(X)$ (resp. $D_{L}^\ast(X)$) is the $C^\ast$-algebra generated by all bounded and uniformly norm-continuous functions $f: [0, \infty) \to C^\ast(X)$ (resp. $f: [0, \infty) \to D^\ast(X)$)  such that 
		\[ \textup{propagation of $f(t) \to 0 $, as $t\to \infty$. }\]  
		Again $D_{L}^\ast(X)$ is a subalgebra of the multiplier algebra of $C_L^\ast(X)$. 
		\item $C_{L, 0}^\ast(X)$ is the kernel of the evaluation map 
		\[  \ev : C_L^\ast(X) \to C^\ast(X),  \quad   \ev (f) = f(0).\]
		In particular, $C_{L, 0}^\ast(X)$ is an  ideal of $C_L^\ast(X)$. Similarly, we define $D_{L, 0}^\ast(X)$ as the kernel of the evaluation map from $D_L^\ast(X)$ to $D^\ast(X)$.
		
	\end{enumerate}
\end{definition}

Now in addition we assume that a discrete group $\Gamma$ acts properly and cocompactly on  $X$  by isometries. Let $H_X$ be a $X$-module equipped with a covariant unitary representation of $\Gamma$. If we denote the representation of $C_0(X)$ by $\varphi$ and the representation of $\Gamma$ by $\pi$, this means 
\[  \pi(\gamma) (\varphi(f) v )  =  \varphi(f^\gamma) (\pi(\gamma) v),\] 
where $f\in C_0(X)$, $\gamma\in \Gamma$, $v\in H_X$ and $f^\gamma (x) = f (\gamma^{-1} x)$. In this case, we call $(H_X, \Gamma, \varphi)$ a covariant system.  

\begin{definition}[\cite{MR2732068}]
	A covariant system $(H_X, \Gamma, \varphi)$ is called admissible if 
	\begin{enumerate}[(1)]
		\item the $\Gamma$-action on $X$ is proper and cocompact;
		\item $H_X$ is a nondegenerate standard $X$-module;
		\item for each $x\in X$, the stabilizer group $\Gamma_x$ acts on $H_X$ regularly in the sense that the action is isomorphic to the action of $\Gamma_x$ on $l^2(\Gamma_x)\otimes H$ for some infinite dimensional Hilbert space $H$. Here $\Gamma_x$ acts on $l^2(\Gamma_x)$ by translations and acts on $H$ trivially. 
	\end{enumerate}
\end{definition}
We remark that for each locally compact metric space $X$ with a proper and cocompact isometric action of $\Gamma$, there exists an admissible covariant system $(H_X, \Gamma, \varphi)$. Also, we point out that the condition $(3)$ above is automatically satisfied if $\Gamma$ acts freely on $X$. If no confusion arises, we will denote an admissible covariant system $(H_X, \Gamma, \varphi)$ by $H_X$ and call it an admissible $(X, \Gamma)$-module.

\begin{definition}
	Let $X$ be a locally compact metric space $X$ with a proper and cocompact isometric action of $\Gamma$. If $H_X$ is an admissible $(X, \Gamma)$-module, we denote by $\mathbb C[X]^\Gamma$ the $\ast$-algebra of all $\Gamma$-invariant locally compact operators with finite propagations in $\mathcal B(H_{X})$.  We define $C^\ast(X)^\Gamma$ to be the completion of $\mathbb C[X]^\Gamma$ in $\mathcal B(H_{ X})$.
\end{definition}
Since the action of $\Gamma$ on $X$ is cocompact,  we have $C^\ast(X)^\Gamma \cong C^\ast_r(\Gamma)\otimes \mathcal K$, where $ C^\ast_r(\Gamma)$ is the reduced group $C^\ast$-algebra of $\Gamma$ and $\mathcal K$ is the algebra of all compact operators, cf. \cite[Lemma 5.14]{MR1399087}.

Similarly, we can also define  $D^\ast( X)^\Gamma$, $C_L^\ast( X)^\Gamma$, $D_L^\ast( X)^\Gamma$,  $C_{L,0}^\ast( X)^\Gamma$, $D_{L, 0}^\ast( X)^\Gamma$, $C^\ast_{L}( Y;  X)^\Gamma$ and $C^\ast_{L,0}( Y;  X)^\Gamma$.   

\begin{remark}
	Up to isomorphism, $C^\ast(X) = C^\ast(X, H_X)$ does not depend on the choice of the standard nondegenerate $X$-module $H_X$. The same holds for $D^\ast(X)$, $C_L^\ast(X)$, $D_L^\ast( X)$,  $C_{L,0}^\ast( X)$, $D_{L, 0}^\ast(X)$, $C^\ast_{L}( Y;X)$, $C^\ast_{L,0}( Y;X)$ and their $\Gamma$-equivariant versions. 
\end{remark}

\begin{remark}
	Note that we can also define maximal versions of all the $C^\ast$-algebras above. For example, we define the maximal $\Gamma$-invariant Roe algebra $C^\ast_{\max}(X)^\Gamma$ to be the completion of $\mathbb C[X]^\Gamma$ under the maximal norm:
	\[  \|a\|_{\max} = \sup_{\phi} \ \big\{\|\phi(a)\| \mid \phi: \mathbb C[X]^\Gamma \to \mathcal B(H') \ \textup{a $\ast$-representation} \big\}. \]
\end{remark}

\subsection{Extension of traces to smooth dense subalgebras}\label{sec:exttr}
In this subsection, we give a brief construction on how to extend the trace $\tr_h\colon \mathbb C\Gamma \to \mathbb C$ to be defined on a smooth dense subalgebra of the reduced group $C^\ast$-algebra $C^\ast_r(\Gamma)$,  provided that  $\langle h\rangle $ has polynomial growth. Also see \cite{Samurkas:2017aa} for an alternative approach and relevant applications.

Let $M$ be a closed oriented Riemannian manifold. 
Let $\smooth$ be the algebra of smoothing operators on $M$. Fix a basis of $L^2(M)$, then $\smooth$ can be identified with the algebra of matrices $(a_{ij})_{i, j\in \mathbb N}$ such that 
\[  \sup_{i, j}i^kj^l|a_{ij}| < \infty \textup{ for all } k, l\in \mathbb N. \]

Let us recall the following smooth dense subalgebra of $C_r^\ast(\Gamma)\otimes \mathcal K$, due to  Connes and Moscovici \cite{CM90}. Let $\Delta$, resp. $D$, be the unbounded operator in $\ell^2(\mathbb N)$, resp. $\ell^2(\Gamma)$, defined by 
\[ \Delta(\delta_j) = j \delta_j \textup{ for } j \in \mathbb N, \textup{ resp. } Dg = |g|\cdot g \textup{ for } g\in \Gamma.   \]
Consider the unbounded derivations $\partial = [D, \cdot]$ of $\mathcal B(\ell^2(\Gamma))$ and $\widetilde \partial = [D\otimes I, \cdot]$ of $\mathcal B(\ell^2(\Gamma)\otimes \ell^2(\mathbb N))$, and set\footnote{To be precise, the algebra $\mathscr B(\widetilde M)^\Gamma$ defined here is slightly different from Connes-Moscovici's algebra $\mathscr B$ in \cite[Lemma 6.4]{CM90}. Both of them are smooth dense subalgebras of $C_r^\ast(\Gamma)\otimes \mathcal K$. In this paper, the algebra $\mathscr B(\widetilde M)^\Gamma$ works better for our purposes.   }:
\[ \mathscr B(\widetilde M)^\Gamma= \{ A\in C_r^\ast(\Gamma)\otimes \mathcal K \mid \widetilde \partial^k(A)\circ (I\otimes \Delta)^2 \textup{ is bounded } \forall k \in \mathbb N \}.   \]
It follows from \cite[Lemma 6.4]{CM90} (and its proof) that $\mathscr B(\widetilde M)^\Gamma$ contains $\mathbb C\Gamma\otimes \mathscr R$ and is closed under holomorphic functional calculus. 

For each $n\in \mathbb N$, define the following seminorm on $\mathscr B(\widetilde M)^\Gamma$:
\[ \|A\|_n = \sum_{k = 0}^n \frac{1}{k!} \big\|\widetilde \partial^k(A)\circ (I\otimes \Delta)^2\big\|_{op}  \]
where $\|\widetilde \partial^k(A)\circ (I\otimes \Delta)^2\big\|_{op}$ stands for the operator norm of $\widetilde \partial^k(A)\circ (I\otimes \Delta)^2$.  Then $\mathscr B(\widetilde M)^\Gamma$ is a Fr\'echet algebra under this sequence of seminorms $\{\|\cdot \|_n \colon n\in \mathbb N\}$.

We associate to each element $h\in \Gamma$ the following trace on $\mathbb C\Gamma\otimes \mathscr R$: 
\[ \tr_h( \gamma \otimes \omega) =  \begin{cases}
\textup{trace}(\omega) & \textup{if } \gamma \in \langle h\rangle,  \\   0 & \textup{if } \gamma \notin \langle h \rangle, 
\end{cases} \]
for $\gamma \in \Gamma $ and $\omega \in \mathscr R$.  Now suppose $h\in \Gamma$ such that its conjugacy class $\langle h\rangle $ has polynomial growth, that is, there exists $C$ and $d$ such that 
\[  \sharp \{g\in \langle h\rangle :  |g|\leq n\} \leq C\cdot n^d. \]
The following lemma shows that the trace $\tr_h$ extends to a continuous trace on $\mathscr B(\widetilde M)^\Gamma$, provided that $\langle h\rangle $ has polynomial growth. 
\begin{lemma}\label{lm:tr}
If $\langle h\rangle $ has polynomial growth, then	$\tr_h$ extends to a continuous trace on  $\mathscr B(\widetilde M)^\Gamma$.  
\end{lemma}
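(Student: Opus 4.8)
The strategy is to show that $\tr_h$ is bounded with respect to one of the seminorms $\|\cdot\|_n$, which immediately gives the desired continuous extension from the dense subalgebra $\mathbb C\Gamma \otimes \mathscr R$ to all of $\mathscr B(\widetilde M)^\Gamma$. Fix $A = (A_{g,g'})_{g,g'\in\Gamma}$ as an element of $\mathbb C\Gamma\otimes\mathscr R$ thought of as a matrix of operators (or rather, write $A = \sum_{g\in\Gamma} g\otimes \omega_g$ with $\omega_g\in\mathscr R$), so that $\tr_h(A) = \sum_{g\in\langle h\rangle}\operatorname{trace}(\omega_g)$. First I would bound each individual term $|\operatorname{trace}(\omega_g)|$ by a trace-norm expression, then use the fact that the $(I\otimes\Delta)^2$ factor forces rapid decay of the matrix entries of $\omega_g$ along the $\mathbb N$-direction; concretely, $\|\omega_g\circ\Delta^2\|_{op}$ bounded means the $(i,j)$ entry of $\omega_g$ decays like $j^{-2}$, so $\operatorname{trace}(\omega_g) = \sum_i (\omega_g)_{ii}$ converges absolutely and is controlled by a constant times $\|\omega_g\circ\Delta^2\|_{op}$ (using $\sum_i i^{-2}<\infty$).

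The second ingredient is to control the sum over $g\in\langle h\rangle$. Here I would use the derivation $\widetilde\partial = [D\otimes I,\cdot]$: applying it $k$ times to $A$ multiplies the $g$-component by $(|g|-|e|)^k = |g|^k$ in an appropriate sense (more precisely $\widetilde\partial^k(g\otimes\omega_g)$ picks up a factor reflecting $|g|$), so that $\|\widetilde\partial^k(A)\circ(I\otimes\Delta)^2\|_{op}$ being finite gives, for each $g$, a bound on $|g|^k \cdot \|\omega_g\circ\Delta^2\|_{op}$ up to lower-order cross terms. Combining this with the polynomial growth hypothesis $\sharp\{g\in\langle h\rangle : |g|\le n\}\le C n^d$, I can choose $k = d+2$ (say) and estimate
\[
\sum_{g\in\langle h\rangle}|\operatorname{trace}(\omega_g)| \ \lesssim\ \sum_{g\in\langle h\rangle} \frac{1}{(1+|g|)^{d+2}}\,\|A\|_{k} \ \lesssim\ \Big(\sum_{n\ge 0}\frac{C(n+1)^d}{(1+n)^{d+2}}\Big)\|A\|_k,
\]
where the last series converges. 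This shows $|\tr_h(A)| \le C'\|A\|_k$ for a fixed $k$ depending only on $d$, hence $\tr_h$ extends continuously; that it remains a trace on the extension follows by continuity since the trace property $\tr_h(AB) = \tr_h(BA)$ holds on the dense subalgebra and multiplication in the Fréchet algebra is continuous.

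The main obstacle is making precise the interaction between the two "directions" of the tensor product $\ell^2(\Gamma)\otimes\ell^2(\mathbb N)$ — that is, carefully extracting from a single operator-norm bound on $\widetilde\partial^k(A)\circ(I\otimes\Delta)^2$ simultaneous decay in the group variable $g$ (polynomial in $|g|$, coming from $\widetilde\partial^k$) and in the $\mathbb N$-variable (quadratic, coming from $\Delta^2$), while handling the lower-order terms produced by the binomial expansion of $\widetilde\partial^k(g\otimes\omega_g)$ and the non-commutativity of $\Delta$ with the $\omega_g$. One must be a little careful that $\widetilde\partial$ acts only on the $\ell^2(\Gamma)$ factor while $\Delta$ acts only on the $\ell^2(\mathbb N)$ factor, so in fact they interact cleanly; the genuine care is needed in relating $\|\widetilde\partial^k(A)\circ(I\otimes\Delta)^2\|_{op}$ to the termwise quantities $|g|^k\|\omega_g\circ\Delta^2\|_{op}$, for which one compresses to the rank-one projection onto $\delta_g\in\ell^2(\Gamma)$ and uses that these compressions are uniformly bounded by the operator norm. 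Once this bookkeeping is done, the polynomial-growth summation is routine.
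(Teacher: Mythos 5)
Your proposal is correct and follows essentially the same route as the paper: both proofs extract $j^{-2}$-decay from the $(I\otimes\Delta)^2$ factor and $|g|^{-k}$-decay from $\widetilde\partial^k$ by evaluating $\widetilde\partial^k(A)\circ(I\otimes\Delta)^2$ on the vectors $\delta_e\otimes\delta_j$ and then invoke polynomial growth, the only real difference being that the paper applies Cauchy--Schwarz over the double index $(j,g)$ (needing only $k>(d+1)/2$) while you bound the two directions separately by the triangle inequality (costing $k>d+1$), and the ``binomial cross terms'' you worry about do not in fact arise since $\widetilde\partial^k(g\otimes\omega_g)(\delta_e\otimes v)=|g|^k\,\delta_g\otimes\omega_g v$ exactly when evaluated on the $\delta_e$-slice. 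Verifying the trace identity $\tr_h(AB)=\tr_h(BA)$ by density and continuity of multiplication, as you note, is also how the paper concludes.
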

\begin{proof}
Our proof will follow closely the proof of \cite[Lemma 6.4]{CM90}. 	If $A\in \mathscr B(\widetilde M)^\Gamma$, $A = (a_{ij})_{i, j\in \mathbb N}$ with $a_{ij}\in C_r^\ast(\Gamma)$, we define 
\begin{equation}\label{eq:sum}
\tr_h (A)  = \sum_{j\in \mathbb N}\sum_{g\in \langle h \rangle} a_{jj}(g).
\end{equation} 
We need to verify that the summation on the right side converges. Consider the following inequality: 
\begin{align*}
\sum_{j\in \mathbb N}\sum_{g\in \langle h \rangle} |a_{jj}(g)| \leq \Big(\sum_{j\in \mathbb N}\sum_{g\in \langle h \rangle} j^2 (1 + |g|)^{2k}|a_{jj}(g)|^2\Big)^{1/2}  \Big(\sum_{j \in \mathbb N} \sum_{g\in \langle h \rangle} j^{-2}(1 + |g|)^{-2k}  \Big)^{1/2} 
\end{align*}
Since $\langle h \rangle$ has polynomial growth, the term $\sum_{j \in \mathbb N} \sum_{g\in \langle h \rangle} j^{-2}(1 + |g|)^{-2k} $ converges  by choosing a sufficiently large $k$, for example, $k > (d+1)/2$. 

On the other hand, observe that 
\begin{align*}
\left(\widetilde \partial^k(A)\circ (I\otimes \Delta)^2\right) (\delta_e \otimes \delta_j) &= j^2 \sum_{i\in \mathbb N}\widetilde \partial^k a_{ij}(\delta_e)\otimes \delta_i \\
& =  j^2 \sum_{(g, i)\in \Gamma \times \mathbb N} |g|^k a_{ij}(g) \delta_g \otimes \delta_i,
\end{align*} 
which implies
\[ \sum_{(g, i)\in \Gamma \times \mathbb N} j^2 |g|^{2k} |a_{ij}(g)|^2 \leq  j^{-2}  \big\|\widetilde \partial^k(A)\circ (I\otimes \Delta)^2\big\|^2_{op}.   \]
It follows that there exists a fixed constant $C_k >0$ such that 
\[  \sum_{j\in \mathbb N}\sum_{g\in \langle h \rangle} j^2 (1 + |g|)^{2k}|a_{jj}(g)|^2 \leq C_k \cdot \|A\|_k^2. \]
 Therefore, $\tr_h$ extends to a continuous linear map on  $\mathscr B(\widetilde M)^\Gamma$.  

Now let us verify that $\tr_h$ is a trace on $\mathscr B(\widetilde M)^\Gamma$, that is, $\tr_h(AB) = \tr_h(BA)$ for $A, B\in \mathscr B(\widetilde M)^\Gamma$. First, assume that $A, B\in \mathbb C\Gamma\otimes \mathscr R$.  In this case, a straightforward calculation shows that    $\tr_h(AB) = \tr_h(BA)$. Now the general case follows, since $\tr_h$ is continuous and  $\mathbb C\Gamma\otimes \mathscr R$ is dense in $\mathscr B(\widetilde M)^\Gamma$. This finishes the proof. 
\end{proof}

Since $\mathscr B(\widetilde M)^\Gamma$ is a dense subalgebra of $C^\ast(\widetilde M)^\Gamma \cong C_r^\ast(\Gamma)\otimes \mathcal K$ and is closed under holomorphic functional calculus, we see that the trace $\tr_h$ induces a homomorphism: 
\[ \tr_h\colon  K_0(\mathscr B(\widetilde M)^\Gamma) =  K_0(C_r^\ast(\Gamma)\otimes \mathcal K) \to \mathbb C.\]

\section{Secondary higher invariants and determinant maps}\label{sec:second}

In this section, we will use the trace maps from the previous section to construct certain determinant maps on secondary higher invariants. More precisely,  for each $h\neq e\in \Gamma$ and 
\[ \tr_h\colon  K_0(\mathscr B(\widetilde M)^\Gamma) =  K_0(C_r^\ast(\Gamma)\otimes \mathcal K) \to \mathbb C,\]
we will construct a linear map $\tau_h\colon K_1(C_{L,0}^\ast(\widetilde M)^\Gamma) \to \mathbb C$. Let us first introduce some notation. 
\begin{definition}
	We define $\mathscr B_L(\widetilde M )^\Gamma$ to be the dense subalgebra of $C_L^\ast(\widetilde M)^\Gamma$ consisting of elements $f\in C_L^\ast(\widetilde M)^\Gamma$ such that   $f(t) \in \mathscr B(\widetilde M)^\Gamma$ for all $t\in [0, \infty)$ and $f$ is piecewise smooth with respect to the Fr\'echet topology of $\mathscr B(\widetilde M)^\Gamma$.
\end{definition}

$\mathscr B_L(\widetilde M )^\Gamma$ is a dense subalgebra of $C_L^\ast(\widetilde M)^\Gamma$ and is closed under holomorphic functional calculus. Similarly, we define $\mathscr B_{L, 0}(\widetilde M )^\Gamma$ to be the kernel of the evaluation map 
\[ \ev\colon  \mathscr B_L(\widetilde M )^\Gamma \to  \mathscr B(\widetilde M)^\Gamma \textup{ defined by } f\mapsto f(0).\]
The following lemma is an immediate consequence of the above definitions. 

\begin{lemma}\label{lm:sm}
The inclusion maps $\mathscr B_L(\widetilde M )^\Gamma \hookrightarrow  C_L^\ast(\widetilde M)^\Gamma$ and $\mathscr B_{L, 0}(\widetilde M )^\Gamma \hookrightarrow  C_{L, 0}^\ast(\widetilde M)^\Gamma$ induce natural isomorphisms: 
\[ K_j(\mathscr B_L(\widetilde M )^\Gamma) \cong K_j(C_L^\ast(\widetilde M)^\Gamma) \textup{ and } K_j(\mathscr B_{L, 0}(\widetilde M )^\Gamma) \cong K_j(C_{L, 0}^\ast(\widetilde M)^\Gamma).\]  
\end{lemma}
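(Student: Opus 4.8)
The plan is to reduce the statement to the well-known general principle that a dense, holomorphically closed subalgebra of a $C^*$-algebra induces isomorphisms on all $K$-groups, and then to verify the two hypotheses (density and holomorphic closure) for the pairs $\mathscr B_L(\widetilde M)^\Gamma \hookrightarrow C_L^\ast(\widetilde M)^\Gamma$ and $\mathscr B_{L,0}(\widetilde M)^\Gamma \hookrightarrow C_{L,0}^\ast(\widetilde M)^\Gamma$. For the first inclusion, density was already asserted in the paragraph preceding the lemma, so the real content is holomorphic closure: given $f \in \mathscr B_L(\widetilde M)^\Gamma$ invertible in the unitization of $C_L^\ast(\widetilde M)^\Gamma$, one must show $f^{-1}$ again lies in $\mathscr B_L(\widetilde M)^\Gamma$, i.e. that $f^{-1}(t) \in \mathscr B(\widetilde M)^\Gamma$ for every $t$ and that $t \mapsto f^{-1}(t)$ is piecewise smooth for the Fréchet topology. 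The first of these is immediate from the fact (quoted from \cite[Lemma 6.4]{CM90}) that $\mathscr B(\widetilde M)^\Gamma$ is itself holomorphically closed in $C_r^\ast(\Gamma)\otimes\mathcal K$, applied pointwise in $t$; the second follows because inversion is a smooth operation on the open set of invertibles in a Fréchet algebra, so composing the piecewise-smooth path $f$ with inversion preserves piecewise smoothness, uniform norm-continuity of the path, and the propagation decay condition (the latter because the resolvent identity $f^{-1}(t) - f^{-1}(s) = f^{-1}(t)(f(s)-f(t))f^{-1}(s)$ together with finite propagation bounds controls how propagation of $f^{-1}(t)$ behaves, and more directly because holomorphic functional calculus applied to an element with propagation tending to $0$ produces an element with propagation tending to $0$).

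Next I would handle the second inclusion. The cleanest route is to use the short exact sequences
\[ 0 \to \mathscr B_{L,0}(\widetilde M)^\Gamma \to \mathscr B_L(\widetilde M)^\Gamma \xrightarrow{\ \ev\ } \mathscr B(\widetilde M)^\Gamma \to 0 \]
and
\[ 0 \to C_{L,0}^\ast(\widetilde M)^\Gamma \to C_L^\ast(\widetilde M)^\Gamma \xrightarrow{\ \ev\ } C^\ast(\widetilde M)^\Gamma \to 0, \]
which fit into a commuting ladder of six-term $K$-theory exact sequences with vertical maps induced by the inclusions. By the already-established isomorphism for $\mathscr B_L \hookrightarrow C_L^\ast$, the quoted isomorphism $K_j(\mathscr B(\widetilde M)^\Gamma) \cong K_j(C^\ast(\widetilde M)^\Gamma)$ from Section~\ref{sec:exttr}, and the fact that $\ev$ on $\mathscr B_L(\widetilde M)^\Gamma$ is surjective (evaluate a smooth path that is constant after a short time, or note that constant paths valued in $\mathscr B(\widetilde M)^\Gamma$ lie in $\mathscr B_L(\widetilde M)^\Gamma$), the Five Lemma forces the map $K_j(\mathscr B_{L,0}(\widetilde M)^\Gamma) \to K_j(C_{L,0}^\ast(\widetilde M)^\Gamma)$ to be an isomorphism as well. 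Alternatively one can argue directly that $\mathscr B_{L,0}(\widetilde M)^\Gamma$ is dense and holomorphically closed in $C_{L,0}^\ast(\widetilde M)^\Gamma$: an element of the unitization invertible in $C_{L,0}^\ast$ has inverse whose non-unit part lies in $C_{L,0}^\ast$, hence vanishes at $t=0$, and the holomorphic-closure argument above shows the path is in $\mathscr B_L$, so it lies in $\mathscr B_{L,0}$; this avoids the diagram chase entirely, and I would probably present this direct version for brevity.

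The main obstacle — such as it is — is purely bookkeeping around the notion of ``piecewise smooth with respect to the Fréchet topology'': one has to confirm that this class of paths is stable under the relevant algebraic operations (sums, products, and in particular inversion/holomorphic functional calculus) and that a path obtained by functional calculus from a piecewise-smooth path is again piecewise smooth and still has propagation tending to $0$. None of this is deep, since on each smooth piece inversion is a smooth map between the relevant open sets and the chain rule applies in the Fréchet category, but it is the one point where care is needed; everything else is a citation of \cite[Lemma 6.4]{CM90} (holomorphic closure of $\mathscr B(\widetilde M)^\Gamma$), the standard Karoubi/density theorem for holomorphically closed subalgebras, and the naturality of the six-term sequence. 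I would therefore structure the write-up as: (1) recall the density/holomorphic-closure criterion; (2) verify it for $\mathscr B_L \hookrightarrow C_L^\ast$, with the piecewise-smoothness stability spelled out; (3) deduce the $\mathscr B_{L,0} \hookrightarrow C_{L,0}^\ast$ case either by the Five Lemma on the ideal-quotient ladder or by the parallel direct argument.
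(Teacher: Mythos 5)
Your proposal is correct and follows essentially the same route as the paper: the paper simply asserts, in the sentence immediately preceding the lemma, that $\mathscr B_L(\widetilde M)^\Gamma$ is a dense, holomorphically closed subalgebra of $C_L^\ast(\widetilde M)^\Gamma$, and then declares the lemma an ``immediate consequence'' — i.e.\ it invokes exactly the density/holomorphic-closure $K$-theory principle you use, without spelling out the verification. Your write-up fills in the details (pointwise holomorphic closure via \cite[Lemma 6.4]{CM90}, stability of piecewise smoothness under inversion, and the ideal case via the Five Lemma or a direct argument), all of which is consistent with what the paper leaves implicit.
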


Now for each non-identity element $h\in \Gamma$ such that the conjugacy class $\langle h\rangle$ has polynomial growth,  we shall construct a determinant map 
\[ \tau_h\colon K_1(\mathscr B_{L,0}(\widetilde M)^\Gamma) \to \mathbb C, \]
which can be equivalently viewed as a map $ \tau_h\colon K_1(C_{L,0}^\ast(\widetilde M)^\Gamma) \to \mathbb C$. Roughly speaking, the explicit formula for $\tau_h$ is given by 
\begin{equation}\label{eq:det}
	\tau_h(u) \coloneqq \frac{1}{2\pi i}\int_{0}^\infty \tr_h\big(\dot{u}(t)u^{-1}(t)\big) dt
\end{equation}
for each $[u]\in K_1(\mathscr B_{L,0}(\widetilde M)^\Gamma)$, 	where $\dot u$ is the derivative of $u$. 
In order to justify the validity of this integral,   we need the following technical results.

\begin{definition}\label{def:locloop}
Let $SC^\ast(\widetilde M)^\Gamma$ be the suspension of $C^\ast(\widetilde M)^\Gamma$, and  $\varphi\in$ be an invertible element in $ SC^\ast(\widetilde M)^\Gamma$, that is,  a loop $\varphi\colon S^1 = [0, 1]/\{0, 1\} \to (C^\ast(\widetilde M)^\Gamma)^+$ of invertible elements such that $\varphi(1) =1$, where  $(C^\ast(\widetilde M)^\Gamma)^+$ is the unitization $C^\ast(\widetilde M)^\Gamma$. We say  $\varphi$ is \emph{local} if it is the image of an invertible element $\psi\in SC^\ast_{L}(\widetilde{M})^\Gamma$ under the evaluation map $SC^\ast_{L}(\widetilde{M})^\Gamma \to SC^\ast(\widetilde{M})^\Gamma$. Similarly, an invertible element $\varphi \in S\mathscr B(\widetilde M)^\Gamma$ is called local if it is the image of an invertible element  $\psi\in S\mathscr B_{L}(\widetilde{M})^\Gamma$ under the evaluation map. 
\end{definition}

Local loops of invertible elements have the following property. 

\begin{lemma}\label{lm:local}
If $\varphi$ is a local invertible element in  $SC^\ast(\widetilde M)^\Gamma$ \textup{(}resp. $S\mathscr B(\widetilde M)^\Gamma$\textup{)}, then for $\forall \varepsilon >0$, there exists an idempotent  $p$ in $C^\ast(\widetilde M)^\Gamma$ \textup{(}resp. $\mathscr B(\widetilde M)^\Gamma$\textup{)} such that   the propagation of $p$ is $\leq \varepsilon$  and $\varphi$ is equivalent to the invertible  element $e^{2\pi i\theta} p + (1-p)$ in $SC^\ast(\widetilde M)^\Gamma$ \textup{(}resp. $S\mathscr B(\widetilde M)^\Gamma$\textup{)}.
\end{lemma}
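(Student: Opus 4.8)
The plan is to use the locality hypothesis to reduce to the case in which $\varphi$ already has arbitrarily small propagation, and then to run a propagation-controlled version of the standard Bott isomorphism $K_1(SA)\cong K_0(A)$, under which the class of $e^{2\pi i\theta}p+(1-p)$ corresponds to $[p]$. Throughout I will use the stable isomorphism $C^\ast(\widetilde M)^\Gamma\cong C^\ast_r(\Gamma)\otimes\mathcal K$ together with the fact that the resulting identifications $M_n(C^\ast(\widetilde M)^\Gamma)\cong C^\ast(\widetilde M)^\Gamma$, and likewise $M_n(\mathscr B(\widetilde M)^\Gamma)\cong\mathscr B(\widetilde M)^\Gamma$, can be arranged to preserve propagation, since they only rearrange the internal multiplicity of the module and do not touch the space $\widetilde M$; this lets me absorb matrix amplifications without enlarging propagation, and similarly lets me absorb the identity summands that appear in the equivalence relation on invertibles.

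\emph{Step 1 (reduction to small propagation).} By Definition~\ref{def:locloop}, $\varphi=\ev(\psi)$ for an invertible element $\psi$ of $(SC^\ast_{L}(\widetilde M)^\Gamma)^+$ in the Roe-algebra case, resp.\ of $(S\mathscr B_{L}(\widetilde M)^\Gamma)^+$ in the smooth case. Writing $\psi=\psi(\theta,t)$ with $\theta\in S^1$ the suspension variable and $t\in[0,\infty)$ the localization variable, each $\varphi_t:=\psi(\,\cdot\,,t)$ is an invertible loop based at $1$ in $(C^\ast(\widetilde M)^\Gamma)^+$, resp.\ $(\mathscr B(\widetilde M)^\Gamma)^+$; the assignment $t\mapsto\varphi_t$ is continuous in the relevant topology, $\varphi_0=\varphi$, and the propagation of $\varphi_t$ tends to $0$ as $t\to\infty$. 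Hence, for a small constant $\varepsilon'>0$ to be fixed at the end and after a harmless norm-small perturbation (using density of $\mathbb C\Gamma\otimes\smooth$ in the smooth case), we may replace $\varphi$ by $\varphi_{t_0}$ with $t_0$ large and thereby assume from the outset that $\varphi$ has propagation $\le\varepsilon'$.

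\emph{Step 2 (controlled Bott correspondence and normal form).} Starting from the small-propagation invertible loop $\varphi$, I first replace it, through invertible loops of the same propagation bound, by a loop that is piecewise linear in $\theta$ and interpolates the values of $\varphi$ at a sufficiently fine partition of $S^1$; this is legitimate because a convex combination of operators of propagation $\le\varepsilon'$ again has propagation $\le\varepsilon'$, and nearby invertibles remain invertible. For such a loop one carries out the classical procedure that extracts a $K_0$-element from an invertible loop: after a matrix amplification (absorbed as in the first paragraph, and after a further stabilization if necessary), one produces from the finitely many values of $\varphi$ an idempotent $p$, obtained by finitely many algebraic operations followed by a holomorphic functional calculus; each of these steps changes propagation by at most a fixed multiplicative constant $C$, so $\operatorname{prop}(p)\le C\varepsilon'$. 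Performing this inside $\mathscr B(\widetilde M)^\Gamma$ is permissible precisely because $\mathscr B(\widetilde M)^\Gamma$ is closed under holomorphic functional calculus, so the spectral projection lands in $\mathscr B(\widetilde M)^\Gamma$; in that case $p$ is an idempotent, not a projection, in accordance with the statement. One then has $[\varphi]=\bigl[e^{2\pi i\theta}p+(1-p)\bigr]\cdot\bigl[e^{2\pi i\theta}\mathbf 1_k+(1-\mathbf 1_k)\bigr]^{-1}$ in $K_1$ for an appropriate scalar corner $\mathbf 1_k\le p$, and a direct computation identifies the right-hand side with $e^{2\pi i\theta}p'+(1-p')$ for the idempotent $p'=p-\mathbf 1_k$, whose propagation is again $\le C\varepsilon'$. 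Since two invertible loops based at $1$ with the same class in $K_1(SC^\ast(\widetilde M)^\Gamma)$, resp.\ $K_1(S\mathscr B(\widetilde M)^\Gamma)$, are equivalent after stabilizing by identity summands, $\varphi$ is equivalent to $e^{2\pi i\theta}p'+(1-p')$. Choosing $\varepsilon'=\varepsilon/C$ at the start of Step~1 yields $\operatorname{prop}(p')\le\varepsilon$, which completes the proof.

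The main obstacle is the propagation bookkeeping in Step~2: the classical passage from an invertible loop to a $K_0$-idempotent ordinarily involves inverses of invertibles and spectral projections, and neither of these is of finite propagation in general, so one must organize the construction so that only finitely many algebraic operations are applied to the small-propagation values of $\varphi$ before any functional calculus is taken, and control the resulting constant $C$. A secondary subtlety is that the whole construction must be carried out inside the merely holomorphically closed Fréchet algebra $\mathscr B(\widetilde M)^\Gamma$ rather than a $C^\ast$-algebra, which is exactly where the closure of $\mathscr B(\widetilde M)^\Gamma$ under holomorphic functional calculus is needed.
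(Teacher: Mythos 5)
The proof breaks down in Step 2, which is precisely where the difficulty lies. You claim that the classical Bott procedure (from an invertible loop to an idempotent) ``changes propagation by at most a fixed multiplicative constant $C$'' even when holomorphic functional calculus is taken. That claim is false: taking the inverse of a finite-propagation operator, and a fortiori applying holomorphic functional calculus (which integrates the resolvent $(z-a)^{-1}$ over a contour), generically destroys finite propagation altogether. The resulting spectral idempotent will in general not lie in $\mathbb C[\widetilde M]^\Gamma$ at all, so there is no constant $C$ to extract, and the argument cannot be salvaged by doing finitely many algebraic operations before the functional calculus --- the functional-calculus step alone already kills the propagation bound. You flag this as ``the main obstacle'' in your closing paragraph, but you never resolve it, and it is exactly the nontrivial content of the lemma. (Step 1 is fine; replacing $\varphi$ by $\varphi_{t_0}$ via the path $s\mapsto\varphi_s$ is a legitimate equivalence, and the argument correctly reduces to the case of a small-propagation loop.)

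The paper avoids this obstruction entirely by a different route: it applies Bott periodicity at the level of the \emph{localization} algebra, $K_0(C_L^\ast(\widetilde M)^\Gamma)\cong K_1(SC_L^\ast(\widetilde M)^\Gamma)$, identifies $K_0(C_L^\ast(\widetilde M)^\Gamma)$ with $K$-homology, and invokes the Baum--Douglas geometric description so that the class is represented by a twisted Dirac operator on a $spin^c$ manifold. The local index construction for such an operator produces, by its very design (using finite propagation speed of wave operators), a \emph{family} of idempotents $P(t)$ whose propagation tends to $0$ as $t\to\infty$; evaluating at large $t$ then gives the desired $p$ with propagation $\le\varepsilon$. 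The smooth case follows by Lemma~\ref{lm:sm}. In other words, the paper never tries to control propagation through the Bott isomorphism in $C^\ast(\widetilde M)^\Gamma$; it gets the small-propagation idempotent from the geometry of $K$-homology cycles. Your proposal would need to replace the ``controlled Bott correspondence'' in Step~2 with something of this sort to close the gap.
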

\begin{proof}	
	By the Bott periodicity map
	\[  \beta\colon K_0(C_L^\ast(\widetilde M)^\Gamma) \xrightarrow{\ \cong \ } K_1(SC_L^\ast(\widetilde M)^\Gamma),\quad  P \mapsto e^{2\pi i \theta}P + (1-P),     \]
every invertible element in $SC_L^\ast(\widetilde X)^\Gamma$ is equivalent to  an invertible element of the form $e^{2\pi i \theta}P + (1-P)$. It follows from the Baum-Douglas geometric description of $K$-homology \cite[Section 11]{BD82} that $P$ can be chosen to be a family of idempotents such that the propagation of $P(t)$ goes to zero as $t$ goes to infinity.  Indeed, since $P$ represents a $K$-homology class, it can be chosen to be the local index (cf. \cite[Section 3]{MR1451759}) of a twisted Dirac operator over a $spin^c$ manifold. The standard construction of $K$-theoretic (local) index classes shows that the propagation of the idempotent\footnote{It is important that we use idempotents instead of projections here. In general, the construction of $K$-theoretic index classes does \emph{not} produce a \emph{projection} (an idempotent that is self-adjoint) with finite propagation, but it does produce an idempotent with arbitrary small propagation. On the other hand, if one insists on having both self-adjointness and finite propagation, one possibility is to  use quasi-projections, cf.  \cite{GY98}.  } $P(t)$ can be made finite and goes to zero as $t$ goes to infinity. 

Since $\varphi$ is the image of an invertible element in $SC_L^\ast(\widetilde X)^\Gamma$ under the evaluation map, it follows immediately that for $\forall \varepsilon >0$, there exists an idempotent $p \in C^\ast(\widetilde M)^\Gamma$ such that the propagation of $p$ is $\leq \varepsilon$ and  $\varphi$ is homotopic to the loop $e^{2\pi i \theta} p + (1-p)$ through a family of loops of invertible elements.  

By applying Lemma $\ref{lm:sm}$, the case of a local invertible element in $S\mathscr B(\widetilde M)^\Gamma$ also follows.
\end{proof}

In order to rigorously define the determinant map $\tau_h\colon K_1(\mathscr B_{L,0}(\widetilde M)^\Gamma) \to \mathbb C$, we shall prove that every element of $K_1(\mathscr B_{L,0}(\widetilde M)^\Gamma)$ has a nice representative with certain regularities. The main motivation for choosing such nice representatives is to guarantee the convergence of the integral in line $\eqref{eq:det}$.   Moreover, we show that  for a given element of $K_1(\mathscr B_{L,0}(\widetilde M)^\Gamma)$, two different such regularized representatives can be connected by a family of representatives of the same kind. This allows us to show that the integral in line  $\eqref{eq:det}$ is independent of the choice of such representatives.  

\begin{proposition}\label{prop:reg}
Every element $[u]\in K_1(\mathscr B_{L,0}(\widetilde M)^\Gamma)$  has a representative $w\colon [0, \infty) \to (\mathscr B(\widetilde M)^\Gamma)^+$ such that 
\[ w(t) = \begin{cases}
u(t) & \textup{ if $0\leq t \leq 1$,}\\
h(t) & \textup{ if $1\leq t \leq 2$,} \\
e^{2\pi i \frac{F(t-1) + 1}{2}} & \textup{ if $t \geq 2$,} 
\end{cases} \]
where $h$  is a piecewise smooth path of invertible elements connecting $u(1)$ and $e^{2\pi i \frac{F(1) + 1}{2}}$, and  $F$ is a piecewise smooth map $F\colon [1, \infty) \to  D^\ast(\widetilde M)^\Gamma$ satisfying 
\begin{enumerate}[$(1)$]
	\item $F(t)^2 - 1 \in \mathscr B(\widetilde M)^\Gamma $ and $F^\ast(t) = F(t)$, 
	\item its derivative $F'(t)\in \mathscr B(\widetilde M)^\Gamma,$ 
	\item and propagation of $F(t)$ goes to $0$, as $t\to \infty$.  
\end{enumerate}
Moreover, if $v$ is another such representative,  then there exists a piecewise smooth family  of invertibles $u_s\in \mathscr B_{L,0}(\widetilde M)^\Gamma$ and piecewise smooth maps $F_s\colon [1, \infty) \to  D^\ast(\widetilde M)^\Gamma$ satisfying conditions $(1), (2)$ and $(3)$ above,  with $s\in [0, 1]$,  such that
\begin{enumerate}[$(i)$]
	\item $u_0 = w$, 
	\item $u_s(t) = e^{2\pi i \frac{F_s(t-1) + 1}{2}} \textup{ for all } t\geq 2; $
	\item $u_1 v^{-1}(t) = 1 $ for all $t\notin (1, 2)$ and $u_1v^{-1}\colon [1, 2]\to \mathscr B(\widetilde M)^\Gamma$  is a local loop of invertible elements.  
\end{enumerate} 

\end{proposition}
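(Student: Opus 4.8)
The plan is to prove the two assertions of Proposition \ref{prop:reg} separately: first the existence of a regularized representative $w$, and then the statement that any two such representatives are connected by a path of representatives of the same kind.

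\textbf{Existence of a regularized representative.} Let $[u]\in K_1(\mathscr B_{L,0}(\widetilde M)^\Gamma)$ with $u\colon[0,\infty)\to(\mathscr B(\widetilde M)^\Gamma)^+$ a representative; by definition of $\mathscr B_{L,0}$ we may assume $u(0)=1$, that $u(t)\in(\mathscr B(\widetilde M)^\Gamma)^+$ is invertible for all $t$, that $u$ is piecewise smooth in the Fr\'echet topology, and that the propagation of $u(t)$ tends to $0$ as $t\to\infty$. The first step is to recall the standard picture of $K_1$ of a localization algebra: an odd $K$-theory class over $C_L^\ast(\widetilde M)^\Gamma$ is represented, after applying Bott periodicity and Lemma \ref{lm:local}, by a path of the form $e^{2\pi i\frac{F(t)+1}{2}}$ where $F(t)$ is an ``abstract symmetry'' coming from the normalization of a Dirac-type operator --- concretely $F(t)=\chi(t^{-1}D)$ or the local index normalization of \cite[Section 3]{MR1451759} --- so that $F(t)=F(t)^\ast$, $F(t)^2-1\in\mathscr B(\widetilde M)^\Gamma$, $F'(t)\in\mathscr B(\widetilde M)^\Gamma$, and the propagation of $F(t)$ goes to $0$. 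The key point is that such $F$ can be arranged precisely because the standard construction of $K$-theoretic local index classes produces operators with arbitrarily small (and decaying) propagation, exactly as in the proof of Lemma \ref{lm:local}; the smoothing-algebra conditions $(1)$ and $(2)$ are then checked from the explicit functional-calculus formulas (using that $D$ has bounded geometry, so $\chi(t^{-1}D)-1$ and its $t$-derivative lie in $\mathscr B(\widetilde M)^\Gamma$ for suitable chopping functions $\chi$). Since $u$ and $t\mapsto e^{2\pi i\frac{F(t-1)+1}{2}}$ represent the same class in $K_1(\mathscr B_{L,0}(\widetilde M)^\Gamma)$, a standard $K$-theory argument (invertibles representing the same class are connected by a path of invertibles, which we then smooth out and reparametrize) produces a piecewise smooth path $h$ of invertibles on $[1,2]$ joining $u(1)$ to $e^{2\pi i\frac{F(1)+1}{2}}$. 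Splicing these three pieces together on $[0,1]$, $[1,2]$ and $[2,\infty)$ yields the desired $w$, and one checks it is homotopic to $u$ rel endpoints inside $\mathscr B_{L,0}(\widetilde M)^\Gamma$.

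\textbf{Connecting two regularized representatives.} Given two regularized representatives $w$ and $v$ as above, with associated symmetries $F^w$ and $F^v$, the strategy is: (a) first homotope $w$ through representatives of the same kind so that its tail symmetry agrees with that of $v$; (b) then deal with the remaining ``difference on $[1,2]$''. For (a), since $F^w(t)$ and $F^v(t)$ are both self-adjoint operators with $F^2-1\in\mathscr B$ and small propagation, representing the same local index class, the space of such symmetries is path-connected through symmetries of the same kind --- one can, e.g., interpolate between two chopping functions, or use a straight-line-plus-correction homotopy $F_s$ keeping $F_s(t)^\ast=F_s(t)$, $F_s(t)^2-1\in\mathscr B$, $F_s'(t)\in\mathscr B$, and propagation decaying; this gives the family $F_s$ and a family $u_s$ equal to $e^{2\pi i\frac{F_s(t-1)+1}{2}}$ for $t\ge 2$, with $u_0=w$, establishing $(i)$ and $(ii)$. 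For (b), after this homotopy we may assume $u_1$ and $v$ have the same tail and both equal their common value on $[0,1]$ only up to homotopy; the product $u_1 v^{-1}$ is then a representative that is trivial outside $(1,2)$, and being the difference of two representatives of the same class it is null-homotopic in $\mathscr B_{L,0}$, hence --- after absorbing the contractions into the region $[1,2]$ and using Lemma \ref{lm:local} together with the isomorphism $K_1(\mathscr B_{L,0})\cong K_1(C^\ast_{L,0})$ of Lemma \ref{lm:sm} --- can be arranged to be, on $[1,2]$, a local loop of invertible elements in $\mathscr B(\widetilde M)^\Gamma$ in the sense of Definition \ref{def:locloop}, which is $(iii)$.

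\textbf{Main obstacle.} The routine parts are the $K$-theory manipulations (paths of invertibles, Bott periodicity, holomorphic-functional-calculus invariance via Lemma \ref{lm:sm}). The genuinely delicate point --- and the one I expect to be the main obstacle --- is arranging \emph{all three} of conditions $(1)$, $(2)$, $(3)$ \emph{simultaneously}, both for the single representative $w$ and, more seriously, for the connecting family $F_s$: one must keep the propagation of $F_s(t)$ decaying \emph{uniformly in $s$} while maintaining that $F_s(t)^2-1$ and $F_s'(t)$ stay in the smoothing algebra $\mathscr B(\widetilde M)^\Gamma$ (a Fr\'echet condition involving all the seminorms $\|\cdot\|_n$), and while $s\mapsto F_s$ and $t\mapsto F_s(t)$ remain piecewise smooth. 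The way I would handle this is to build everything from a single chopping-function construction applied to $t^{-1}\widetilde D$: fix a Schwartz-class odd function $\chi$ with $\chi^2-1\in\mathscr S(\mathbb R)$ compactly-Fourier-supported approximants, set $F_s(t)=\chi_s(t^{-1}\widetilde D)$ for an appropriate family of chopping functions $\chi_s$, and use finite propagation speed of the wave operator of $\widetilde D$ together with the bounded-geometry estimates that already underlie the definition of $\mathscr B(\widetilde M)^\Gamma$ to verify $(1)$--$(3)$ with the required uniformity; the Connes--Moscovici-type seminorm bounds from Lemma \ref{lm:tr} and \cite[Lemma 6.4]{CM90} are exactly the tool for controlling the $\widetilde\partial$-derivatives that enter these conditions.
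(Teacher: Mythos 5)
Your overall architecture matches the paper's: produce $w$ by representing the $K_1$-class by a path of the form $e^{2\pi i\frac{F(t)+1}{2}}$ with $F$ self-adjoint, smoothing-symmetric, and of decaying propagation, then glue; for uniqueness, reduce to connecting two such symmetries $F$ and $G$ by a family $F_s$ and absorb the leftover into a local loop on $[1,2]$. You also correctly flag the one genuinely delicate step.

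However, there is a real gap precisely at that step. You propose to build the connecting family $F_s$ by interpolating chopping functions applied to a \emph{single} Dirac operator, $F_s(t)=\chi_s(t^{-1}\widetilde D)$. That only handles the case where both symmetries arise from the same operator with different normalizing functions. The Proposition is about an arbitrary $[u]\in K_1(\mathscr B_{L,0}(\widetilde M)^\Gamma)$, and the paper produces $F$ by first passing to $K_1(\mathscr B_L)\cong K_1^\Gamma(\widetilde M)$ and invoking the \emph{Baum--Douglas geometric model}: $\hat u$ is represented by a twisted Dirac operator on some auxiliary $spin^c$ manifold, not on $\widetilde M$ itself. Consequently two regularized representatives $w$ and $v$ of the same class may have tails $F$ and $G$ coming from Dirac operators on \emph{different} geometric cycles. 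Connecting them through a piecewise smooth family $F_s$ that \emph{simultaneously} satisfies $(1)$--$(3)$ is exactly the content of Keswani's controlled-paths theorem for geometric $K$-homology, which the paper cites as the key input (\cite[Theorem 3.8]{MR1701826}). Without that (or some equivalent bordism/controlled-path argument in geometric $K$-homology), the assertion ``the space of such symmetries is path-connected through symmetries of the same kind'' is unjustified, and a straight-line homotopy between two self-adjoint involutions of small propagation need not stay invertible, let alone control the Fr\'echet seminorms uniformly.

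A secondary, smaller gap: your treatment of $(iii)$ is also stated at the level of ``after absorbing contractions it can be arranged to be a local loop.'' The paper makes this explicit by inserting the interpolating segment $e^{2\pi i\frac{F_s(1)+1}{2}}$ on $[2,3]$ (the path $\varpi$), reparametrizing, and exhibiting $\varpi v^{-1}$ on $[1,2]$ as the boundary restriction of an invertible element of $S\mathscr B_L(\widetilde M)^\Gamma$, hence local in the sense of Definition \ref{def:locloop}. That concrete splicing is what makes the verification go through; you should supply it rather than appeal to it being possible.
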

\begin{remark}
	For simplicity, we shall call a representative as in the proposition a \emph{regularized} representative. 
\end{remark}
\begin{proof}
	View the invertible element $u \in \mathscr B_{L,0}(\widetilde M)^\Gamma$ as an invertible element in $\mathscr B_{L}(\widetilde M)^\Gamma$. Consider the element\footnote{ Note that $\hat u$ starts at $t=1$ instead of $t=0$.} $\hat u = u \colon [1, \infty) \to (\mathscr B(\widetilde M)^\Gamma)^+$ in $K_1(\mathscr B_{L}(\widetilde M)^\Gamma)$. Since the $K$-theory of $\mathscr B_{L}(\widetilde M)^\Gamma$ is the $K$-homology of $M$, it follows from the Baum-Douglas geometric description of $K$-homology \cite[Section 11]{BD82} that $\hat u$ can be represented by a twisted Dirac operator over a $spin^c$ manifold. In particular, it follows that there exists a piecewise smooth map $F\colon [1, \infty) \to  D^\ast(\widetilde M)^\Gamma$ satisfying 
	\begin{enumerate}[$(1)$]
		\item $F(t)^2 - 1 \in \mathscr B(\widetilde M)^\Gamma$ and $F^\ast(t) = F(t)$,
		\item its derivative $F'(t)\in \mathscr B(\widetilde M)^\Gamma,$ 
		\item propagation of $F(t)$ goes to $0$, as $t\to \infty$; 
	\end{enumerate}
and $\hat u(t)$ is homotopic to the path  $ e^{2\pi i \frac{F(t) + 1}{2}}$ with $t\in [1, \infty). $ In particular, there is a path of invertible elements, denoted by $h$,  connecting $u(1)$ and $e^{2\pi i \frac{F(1) + 1}{2}}$. 
Then $u$ is homotopic to the invertible element $w$ defined by 
\[ w(t) = \begin{cases}
u(t) & \textup{ if $0\leq t \leq 1$,}\\
h(t) & \textup{ if $1\leq t \leq 2$,} \\
 e^{2\pi i \frac{F(t-1) + 1}{2}} & \textup{ if $t \geq 2$,} 
\end{cases} \]
cf. Figure $\ref{fig:homtpy}$ below. 

\begin{figure}[h]
	\begin{center}
		\begin{tikzpicture}
    \draw [fill= lightgray, lightgray] (0,0) rectangle (10,-2);
		\draw [thick] (0,0) -- (2,0) -- (4, 0) -- (10, 0);
		
		\filldraw [black] (2,0) circle (1pt);
		\filldraw [black] (4,0) circle (1pt);  
		 \node [above] at (1,0) {$u(t)$};
		 \node [above] at (3,0) {$h(t)$};
		 \node [above] at (6,0) {$e^{2\pi i \frac{F(t-1) + 1}{2}}$};

		\draw [thick] (0,-2) -- (2,-2) -- (4, -2) -- (10, -2);
		\filldraw [black] (2,-2) circle (1pt);
		\draw (2, -2) -- (4, 0); 
		\node [left] at (2.9, -1) {$h(t)$}; 
		%\filldraw [black] (4,-2) circle (1pt);
		
		 \node [below] at (1,-2) {$u(t)$};
		\node [below] at (5,-2) {$u(t)$};
		\end{tikzpicture}
	\end{center}
	\caption{homotopy between $u$ and $w$.} \label{fig:homtpy}
\end{figure}
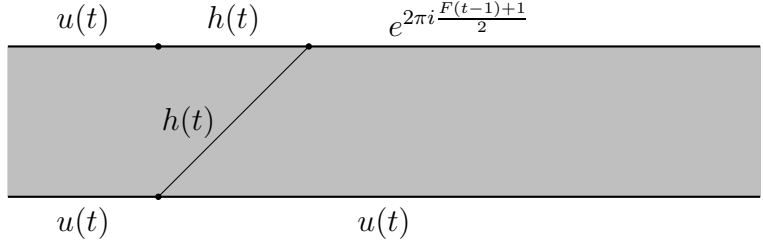

Now suppose $v$ is another representative of $[u]$ such that 
\[ v(t) = \begin{cases}
u(t) & \textup{ if $0\leq t \leq 1$,}\\
g(t) & \textup{ if $1\leq t \leq 2$,} \\
e^{2\pi i \frac{G(t-1) + 1}{2}} & \textup{ if $t \geq 2$,} 
\end{cases} \]
where $g$ is a path of invertible elements connecting $u(1)$ and $e^{2\pi i \frac{G(1) + 1}{2}}$, and $G$ is a piecewise smooth map $G\colon [1, \infty) \to  D^\ast(\widetilde M)^\Gamma$ satisfying that  $G(t)^2 - 1 \in \mathscr B(\widetilde M)^\Gamma $ and $G^\ast(t) = G(t)$; its derivative $G'(t)\in \mathscr B(\widetilde M)^\Gamma$;  and propagation of $G(t)$ goes to $0$, as $t\to \infty$. 

By \cite[Theorem 3.8]{MR1701826}, there exists a piecewise smooth family $F_s\colon [1, \infty) \to  D^\ast(\widetilde M)^\Gamma$ with $s\in [0, 1]$  such that $F_0 = F$ and $F_1 = G$;   $F_s(t)^2 - 1 \in \mathscr B(\widetilde M)^\Gamma $ and $F_s^\ast(t) = F_s(t)$; its derivative $\frac{\partial}{\partial t}F_s(t)\in \mathscr B(\widetilde M)^\Gamma;$ and propagation of $F_s(t)$ goes to $0$, as $t\to \infty$.  

Let $\varpi \colon [0, \infty) \to (\mathscr B(\widetilde M)^\Gamma)^+$ be the path of invertibles defined as 
\[ \varpi (t) = \begin{cases}
u(t) & \textup{ if $0\leq t \leq 1$,}\\
h(t) & \textup{ if $1\leq t \leq 2$,}\\
e^{2\pi i \frac{F_{s}(1) + 1}{2}} & \textup{ if $2\leq t  = s+2 \leq 3$,} \\
e^{2\pi i \frac{G(t-2) + 1}{2}} & \textup{ if $t \geq 3$,} 
\end{cases} \]
Clearly,  $w$ is homotopic to $\varpi $. On the other hand, after a re-parametrization, it is not difficult to see that $\varpi$ differs from $v$ by the loop $f\colon [0, 1]\to  (\mathscr B(\widetilde M)^\Gamma)^+$ with 
\[  f(t) = \begin{cases}
g(t)^{-1}h(2t) & \textup{if $0 \leq t \leq 1/2$}\\
\\
g(t)^{-1}e^{2\pi i \frac{F_{2t-1}(1) + 1}{2}}  & \textup{ if $1/2\leq t \leq 1$.} 
\end{cases} \]
Moreover,  $f$ is a local loop in the sense of Definition $\ref{def:locloop}$ (cf. Figure $\ref{fig:local}$). This finishes the proof.
\begin{figure}[h]
	\begin{center}
		\begin{tikzpicture}
		%\draw [fill= lightgray, lightgray] (0,0) rectangle (15,-4);
		
		\draw [thick](0,0) -- (2,0) -- (4, 0) -- (15, 0);
		\filldraw [black] (2,0) circle (1.5pt);
		\filldraw [black] (4,0) circle (1.5pt);  
		\draw [thick] (0,-2) -- (2,-2) -- (3.5, -2);
		\draw [thick, dashed] (3.5, -2) -- (4.5, -2);
		\filldraw [black] (2,-2) circle (1.5pt);
		
		 \node [above] at (1,0) {$u(t)$};
		\node [above] at (3,0) {$h(t)$};
		\node [above] at (6,0) {$e^{2\pi i \frac{F(t-1) + 1}{2}}$};

		\draw [thick] (0,-4) -- (2,-4) -- (4, -4) -- (15, -4);
		\filldraw [black] (2,-4) circle (1.5pt);
		\filldraw [black] (4,-4) circle (1.5pt);
		
		\node [left] at (2.9, -1) {$h(t)$}; 
		\node [left] at (2.9, -3) {$g(t)$}; 
		\node [right] at (4.6, -1.6) { $e^{2\pi i \frac{F_s(1) + 1}{2}}$ };
				
		\draw (4, 0) --  (2, -2);
		\draw (2, -2) -- (4, -4);
		\draw (4, 0) to [out = -60, in= 60] (4, -4);
		
		\draw [fill= lightgray] (4,0) -- (14, 0)  to [out = -60, in= 60] (14, -4) -- (4, -4)  to [out = 60, in= -60] (4, 0);
		
		\draw[dashed]  (14, -4) --  (12, -2) -- (14, 0) ;
		\draw (14, 0) to [out = -60, in= 60] (14, -4);
		
		\filldraw [black] (14,0) circle (1.5pt);
		\filldraw [black] (14,-4) circle (1.5pt);
		\filldraw [black] (12,-2) circle (1.5pt);
		
		 \node [below] at (1,-4) {$u(t)$};
		\node [below] at (3,-4) {$g(t)$};
		\node [below] at (6,-4) {$e^{2\pi i \frac{G(t-1) + 1}{2}}$};
		\end{tikzpicture}
	\end{center}
\caption{$\varpi$ and $v$ differs by a local loop. The picture should be viewed as 3-dimensional. The circular sectors form an element in $S\mathscr B_L(\widetilde M)^\Gamma$, which shows that the circular sector on the left is local in the sense of the Definition $\ref{def:locloop}$.} \label{fig:local}
\end{figure}
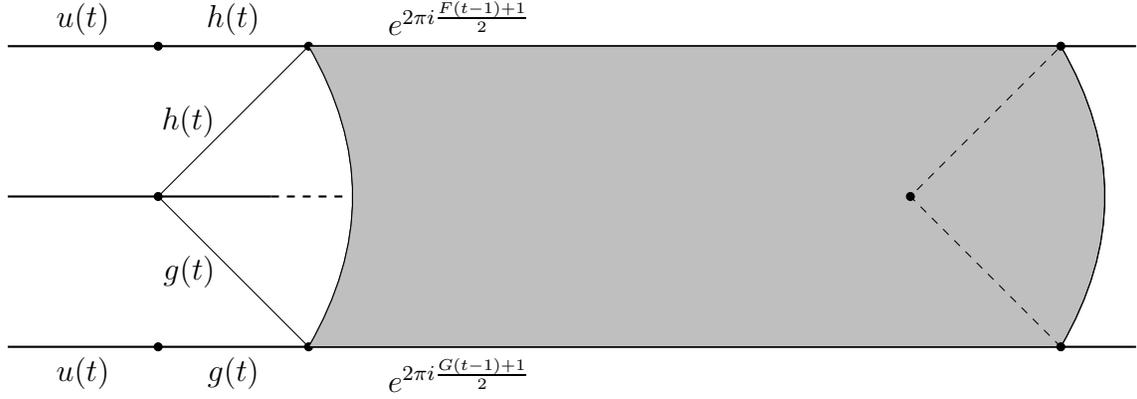

\end{proof}

As before, suppose that $h$ is a non-identity element in $\Gamma$ such that its conjugacy class $\langle h\rangle$ has polynomial growth.

\begin{definition}\label{def:reg}
	For each $[u]\in K_1(\mathscr B_{L,0}(\widetilde M)^\Gamma)$, let $w$ be a regularized representative of $u$ as in Proposition $\ref{prop:reg}$.  We define
	\begin{equation}\label{eq:int}
	\tau_h(u) \coloneqq \frac{1}{2\pi i}\int_{0}^\infty \tr_h\big(\dot{w}(t)w^{-1}(t)\big) dt.
	\end{equation} 
where $\dot w$ is the derivative of $w$. 
\end{definition}

Let us show that the above formula $\eqref{eq:int}$ gives a well-defined map $\tau_h\colon K_1(\mathscr B_{L,0}(\widetilde M)^\Gamma) \to \mathbb C$. In particular, we shall prove that the integral in the formula $\eqref{eq:int}$ converges and  is independent of the choice of regularized representative. 

\begin{proposition} \label{prop:det} If  the conjugacy class $\langle h\rangle$ of a non-identity element $h\in \Gamma $ has polynomial growth, then 
	the map $\tau_h \colon K_1( C^\ast_{L,0}(\widetilde M)^\Gamma) \to \mathbb C$ is well-defined. 
\end{proposition}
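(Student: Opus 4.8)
The plan is to establish two things: first that the integral in \eqref{eq:int} converges absolutely for any regularized representative $w$ as produced by Proposition \ref{prop:reg}, and second that its value does not depend on the choice of such representative.

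For convergence, I would break the integral over $[0,\infty)$ into the three regimes coming from the piecewise description of $w$. On $[0,1]$ the path is just $u$, which is piecewise smooth in the Fréchet topology of $\mathscr B(\widetilde M)^\Gamma$, and on $[1,2]$ the path is the piecewise smooth path $h$ of invertibles; on these compact intervals $\dot w(t) w^{-1}(t)$ lies in $\mathscr B(\widetilde M)^\Gamma$ with seminorms bounded uniformly, so by Lemma \ref{lm:tr} the integrand $\tr_h(\dot w(t)w^{-1}(t))$ is bounded and the contribution is finite. The essential point is the tail $t\geq 2$, where $w(t) = e^{2\pi i \frac{F(t-1)+1}{2}}$. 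Here a direct computation of $\dot w(t)w^{-1}(t)$ in terms of $F'(t)$ and $F(t)$ (using that $F(t)^2-1\in \mathscr B(\widetilde M)^\Gamma$ and $F'(t)\in \mathscr B(\widetilde M)^\Gamma$ by conditions $(1)$ and $(2)$) shows that $\dot w(t)w^{-1}(t)$ lies in $\mathscr B(\widetilde M)^\Gamma$, but more importantly it is supported, up to the ideal, on operators whose propagation goes to $0$ as $t\to\infty$ by condition $(3)$. Since $\tr_h$ only sees the part of an operator's kernel concentrated along the conjugacy class $\langle h\rangle$ and $h\neq e$, once the propagation of $F(t)$ drops below the minimal displacement associated to $\langle h\rangle$, the quantity $\tr_h(\dot w(t)w^{-1}(t))$ vanishes identically. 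Thus the tail integral is in fact eventually zero, and convergence is immediate. I expect this vanishing-at-small-propagation argument — making precise that a finite-propagation operator of small enough propagation has zero $h$-trace for $h\neq e$ — to be the technical heart of the convergence claim.

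For independence of the representative, I would use the second half of Proposition \ref{prop:reg}: given two regularized representatives $w$ and $v$ of $[u]$, there is a piecewise smooth family $u_s$, $s\in[0,1]$, with $u_0 = w$, each $u_s$ of the regularized form on $[2,\infty)$, and $u_1 v^{-1}$ equal to $1$ outside $(1,2)$ and a local loop of invertibles on $[1,2]$. The strategy is: (a) show $\tau_h$ is constant along the homotopy $s\mapsto u_s$, and (b) show $\tau_h(v) = \tau_h(u_1)$ by analyzing the local loop $u_1 v^{-1}$. For (a), differentiate $\int_0^\infty \tr_h(\dot u_s(t) u_s(t)^{-1})\,dt$ in $s$; the standard computation shows $\frac{d}{ds}\tr_h(\dot u_s u_s^{-1}) = \frac{\partial}{\partial t}\tr_h\big(\frac{\partial u_s}{\partial s}u_s^{-1}\big)$ using the trace property of $\tr_h$ (Lemma \ref{lm:tr}) and commuting $\partial_s$ with $\partial_t$, so the $s$-derivative of the integral is a boundary term $\tr_h\big(\frac{\partial u_s}{\partial s}u_s^{-1}\big)\big|_{t=0}^{t=\infty}$, which vanishes: at $t=0$ because $u_s\in \mathscr B_{L,0}(\widetilde M)^\Gamma$ forces $u_s(0)=1$ and hence $\partial_s u_s(0)=0$, and at $t=\infty$ because for large $t$ the propagation of $F_s(t)$ is small so the $h$-trace vanishes as above. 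For (b), by Lemma \ref{lm:local} the local loop $u_1 v^{-1}$ is equivalent in $S\mathscr B(\widetilde M)^\Gamma$ to $e^{2\pi i\theta}p + (1-p)$ with $p$ an idempotent in $\mathscr B(\widetilde M)^\Gamma$ of arbitrarily small propagation; choosing the propagation small enough that $\tr_h(p)=0$, a direct evaluation of the integral formula on this normal form gives $\tr_h(p)=0$, so the local loop contributes nothing to $\tau_h$. Combining (a) and (b) (together with additivity of the integral formula under concatenation/composition of paths, which follows from the trace property) yields $\tau_h(w) = \tau_h(v)$.

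The subtle points to watch are: verifying that $\tau_h$ as defined on regularized representatives is additive under the natural composition of loops, so that "differs by a local loop" translates into "differs in $\tau_h$-value by the $\tau_h$ of that loop"; and being careful that all the manipulations of $\dot w w^{-1}$ stay inside $\mathscr B(\widetilde M)^\Gamma$ where $\tr_h$ is defined and continuous — this is exactly what conditions $(1)$ and $(2)$ of Proposition \ref{prop:reg} are for. The main obstacle, as noted, is the clean statement and use of the fact that $\tr_h$ annihilates operators of sufficiently small propagation when $h\neq e$; once that is in hand, both convergence and the vanishing of the tail boundary terms in the homotopy argument follow, and the rest is bookkeeping with the trace identity.
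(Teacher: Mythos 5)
Your proposal matches the paper's own proof in both structure and substance: you split the integral the same way, use the same ``$\tr_h$ kills operators of small propagation when $h\neq e$'' observation to handle convergence and the $t=\infty$ boundary term, prove independence via the same transgression formula along the homotopy $u_s$, and dispose of the residual local loop via Lemma \ref{lm:local} exactly as the paper does. The only cosmetic differences are that you spell out why the $t=0$ boundary term vanishes (because $u_s(0)=1$) and explicitly flag the concatenation-additivity point, both of which the paper leaves implicit.
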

\begin{proof}
 	Let $[u]\in K_1( C^\ast_{L,0}(\widetilde M)^\Gamma) \cong K_1(\mathscr B_{L,0}(\widetilde M)^\Gamma) $.  Let $w$ be a regularized representative of $[u]$ as in Proposition $\ref{prop:reg}$.  First, we shall show that the integral in the formula $\eqref{eq:int}$ converges. Indeed, we have 
	\begin{align*}
	\frac{1}{2\pi i} \int_{0}^\infty \tr_h\big(\dot{w}(t)w^{-1}(t)\big) dt  = \frac{1}{2\pi i} \int_{0}^2  \tr_h\big(\dot{w}(t)w^{-1}(t)\big) dt  + \frac{1}{2\pi i} \int_{2}^\infty \tr_h(\dot F(t)) dt \\
	\end{align*}
	The first integral on the right is clearly well-defined. Observe that there exists $\varepsilon >0$ (depending only on $\widetilde M$) such that $\tr_h(\dot F(t)) = 0$ as long as the propagation of $\dot F(t)$ is less than $\varepsilon$.  Since the propagation of $\dot F(t)$ goes $0$  as $t$ goes to $\infty$,  it follows that the second integral on the right is well-defined.

	 Now let us show that $\tau_h([u])$ is independent of the choice of regularized representatives. suppose $v$ is another regularized representative of $[u]$. By Proposition $\ref{prop:reg}$, there exists a piecewise smooth family  of invertibles $u_s\in \mathscr B_{L,0}(\widetilde M)^\Gamma)^+$ with the stated properties $(i) - (iii)$ as in Proposition $\ref{prop:reg}$. A key consequence of these properties is that it guarantees the convergence of each integral in the following transgression formula:
	\begin{align}
	\frac{\partial}{\partial s} \tau_h(u_s) & = \dashint_{0}^\infty  \partial_s \tr_h\big((\partial_t u)u^{-1}\big) dt \label{eq:trans} \\
	& = \dashint_{0}^\infty  \tr_h\big((\partial_s\partial_tu)u^{-1}\big) dt + \dashint_{0}^\infty  \tr_h\big((\partial_tu)\partial_s (u^{-1})\big)dt  \notag \\
	& =  \dashint_{0}^\infty  \tr_h\big((\partial_s\partial_tu)u^{-1}\big) dt - \dashint_{0}^\infty  \tr_h\big((\partial_tu)u^{-1}(\partial_s u)u^{-1}\big)dt \notag \\
	& = \dashint_{0}^\infty  \tr_h\big((\partial_s\partial_tu)u^{-1}\big) dt + \dashint_{0}^\infty  \tr_h\big(\partial_t(u^{-1})(\partial_s u)\big)dt \notag \\
	& = \dashint_{0}^\infty  \partial_t\tr_h\big((\partial_su)u^{-1}\big) dt \notag \\
	& = \tr_h\big((\partial_sF_s)(n)\big) -  \tr_h\big((\partial_su_s)(0)u_s^{-1}(0)\big) \textup{ for $n$ sufficiently large } \notag \\
	& = 0, \notag
	\end{align}
%		\begin{align*}
%	\frac{\partial}{\partial s} \tau_h(u_s) & = \dashint_{0}^\infty  \partial_s \tr_h\big((\partial_t u_s(t))u_s^{-1}(t)\big) dt  \\
%	& = \dashint_{0}^\infty  \tr_h\big((\partial_s\partial_tu_s(t))u_s^{-1}(t)\big) dt + \dashint_{0}^\infty  \tr_h\big((\partial_tu_s(t))(\partial_s u_s^{-1}(t))\big)dt \\
%	& =  \dashint_{0}^\infty  \tr_h\big((\partial_s\partial_tu_s(t))u_s^{-1}(t)\big) dt + \dashint_{0}^\infty  \tr_h\big((\partial_tu_s(t))u_s^{-1}(t)(\partial_s u_s(t))u_s^{-1}(t)\big)dt \\
%	& = \dashint_{0}^\infty  \tr_h\big((\partial_s\partial_tu_s(t))u_s^{-1}(t)\big) dt + \dashint_{0}^\infty  \tr_h\big((\partial_tu_s^{-1}(t))(\partial_s u_s(t))\big)dt \\
%	& = \dashint_{0}^\infty  \partial_t\tr_h\big((\partial_su_s(t))u_s^{-1}(t)\big) dt \\
%	& = \tr_h\big(\partial_sF_s(T)\big) -  \tr_h\big((\partial_su_s(0))u_s^{-1}(0)\big) \textup{ for $T$ sufficiently large }\\
%	& = 0, 
%	\end{align*}
	where $\dashint$ stands for  $\frac{1}{2\pi i}\int$.  It follows that $\tau_h(w)  = \tau_h(u_0) = \tau_h(u_1)$. On the other hand, $v$ and $u_1$ differ by a local loop $\varphi$. By Lemma $\ref{lm:local}$, a local loop $\varphi\colon S^1\to  (\mathscr B(\widetilde M)^\Gamma)^+$  is homotopic to a loop $e^{2\pi i\theta}p + (1-p)$ with the propagation of the idempotent $p$ being sufficiently small.  It follows that
	\[  \frac{1}{2 \pi i} \int_0^1 \tr_h(\dot\varphi(\theta)\varphi^{-1}(\theta)) d\theta  = \int_0^1 \tr_h(p) d\theta = 0,   \]
	since the propagation of $p$ is sufficiently small. Therefore, we have  $\tau_h(v) = \tau_h(u_1) = \tau_h(w)$. This finishes the proof. 
\end{proof}

The determinant map $\tau_h \colon K_1( C^\ast_{L,0}(\widetilde M)^\Gamma)  \to \mathbb C$ 
is related to the trace map 
\[ \tr_h\colon K_0(C_r^\ast(\Gamma))\to \mathbb C \] as follows. 
\begin{lemma}\label{lm:bdry} With the same notation as above,  if  the conjugacy class $\langle h\rangle$ of a non-identity element $h\in \Gamma $ has polynomial growth, then the following diagram commutes:
	\[\xymatrix{ K_0(C_r^\ast(\Gamma)) \ar[d]_{-\tr_h} \ar[r]^-\partial  & K_1(C_{L,0}^\ast(\widetilde M)^\Gamma) \ar[d]^{\tau_h} \\
		\mathbb C \ar[r]& \mathbb C }  \]
	where $\partial \colon K_0(C_r^\ast(\Gamma))  \to  K_1(C_{L,0}^\ast(\widetilde M)^\Gamma)$ is the connecting map in the six-term $K$-theory long exact sequence for the short exact sequence:
	\[ 0 \to C_{L,0}^\ast(\widetilde M)^\Gamma \to  C_L^\ast(\widetilde M)^\Gamma  \to  C^\ast(\widetilde M)^\Gamma \to 0.\] 
\end{lemma}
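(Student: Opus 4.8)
The plan is to trace a standard generator of $K_0(C_r^\ast(\Gamma)) \cong K_0(C^\ast(\widetilde M)^\Gamma)$ through the connecting map $\partial$, produce an explicit regularized representative of its image in $K_1(C_{L,0}^\ast(\widetilde M)^\Gamma)$, feed it into the integral formula \eqref{eq:int} defining $\tau_h$, and check that the result equals $-\tr_h$ of the original class. Since $\tr_h$ on $K_0$ factors through the smooth dense subalgebra $\mathscr B(\widetilde M)^\Gamma$ (by Lemma \ref{lm:tr}), it suffices to work entirely at the level of the smooth subalgebras $\mathscr B(\widetilde M)^\Gamma$, $\mathscr B_L(\widetilde M)^\Gamma$, $\mathscr B_{L,0}(\widetilde M)^\Gamma$, whose $K$-theory agrees with that of the corresponding $C^\ast$-algebras by Lemma \ref{lm:sm}.

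First I would recall the concrete description of $\partial$. A class in $K_0(C^\ast(\widetilde M)^\Gamma)$ is represented by a formal difference of idempotents; by the discussion in the proof of Proposition \ref{prop:reg} (the Baum--Douglas picture), any such class lifts to an idempotent-valued path $P \in \mathscr B_L(\widetilde M)^\Gamma$ with $P(0)$ the given idempotent and propagation of $P(t) \to 0$ as $t\to\infty$; equivalently, applying Bott periodicity, the invertible $e^{2\pi i\theta}P + (1-P)$ lives in $S\mathscr B_L(\widetilde M)^\Gamma$. The connecting map $\partial$ then sends $[P]-[e\otimes(\text{rank})]$ to the class in $K_1(\mathscr B_{L,0}(\widetilde M)^\Gamma)$ of $e^{\pi i (F(t)+1)}$-type invertibles where $F$ is built from $2P-1$; more precisely, one writes $\partial[P]$ via the exponential of a lift, so that a regularized representative (in the sense of Proposition \ref{prop:reg}) has the form $w(t) = e^{2\pi i \frac{F(t)+1}{2}}$ for large $t$ with $F(t) = 2P(t)-1$, and $w(0) = 1$. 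Then $\dot w(t)w^{-1}(t) = 2\pi i \cdot \frac{\dot F(t)}{2} = \pi i\,\dot F(t)$ up to the usual commutator corrections, which vanish under $\tr_h$ by the trace property, so
\[
\tau_h(w) = \frac{1}{2\pi i}\int_0^\infty \tr_h\big(\dot w(t)w^{-1}(t)\big)\,dt = \frac{1}{2}\int_0^\infty \tr_h(\dot F(t))\,dt = \frac{1}{2}\Big(\tr_h(F(\infty)) - \tr_h(F(0))\Big).
\]
Now $F(\infty)$ has propagation $0$, hence (for $\langle h\rangle \neq \langle e\rangle$, as in the proof of Proposition \ref{prop:det}) $\tr_h(F(\infty)) = 0$, while $F(0) = 2P(0)-1$ gives $\tr_h(F(0)) = 2\tr_h(P(0)) = 2\tr_h([P])$ after subtracting the scalar part (which is supported at the identity and killed by $\tr_h$). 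This yields $\tau_h(\partial[P]) = -\tr_h([P])$, i.e. commutativity of the diagram.

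**The main obstacle** is bookkeeping the precise normalization and the various homotopies: one must (a) verify that the lift of $\partial$ to $\mathscr B_{L,0}$ can indeed be taken in the regularized form of Proposition \ref{prop:reg} with $F = 2P - 1$, reconciling the factor-of-$2$ conventions between the Bott map $P \mapsto e^{2\pi i\theta}P+(1-P)$ and the exponential description of the connecting map; (b) justify discarding the commutator terms in $\dot w w^{-1}$ when $w = e^{2\pi i\frac{F+1}{2}}$ — this uses $\tr_h(AB) = \tr_h(BA)$ on $\mathscr B(\widetilde M)^\Gamma$ from Lemma \ref{lm:tr} together with the fact that all relevant operators lie in (or have derivatives in) $\mathscr B(\widetilde M)^\Gamma$, so the integrand is genuinely trace-class in the required sense; and (c) confirm independence of the choice of lift, which is exactly the well-definedness already established in Proposition \ref{prop:det}. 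The sign and the vanishing $\tr_h(F(\infty))=0$ both hinge on the finite-propagation vanishing observation used in Proposition \ref{prop:det}, so no new analytic input is needed beyond what is already in hand; the argument is essentially a careful transgression computation.
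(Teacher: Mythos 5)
There is a genuine gap in your description of the connecting map, and it makes the computation as written internally inconsistent. You claim that a class $[P]\in K_0(C^\ast(\widetilde M)^\Gamma)$ always lifts to an \emph{idempotent-valued} path $P(t)\in\mathscr B_L(\widetilde M)^\Gamma$ with $P(0)$ the given idempotent and propagation going to $0$. This is false in general: such a lift would exhibit $[P]$ as coming from $K_0(C_L^\ast(\widetilde M)^\Gamma)$, so by exactness of the six-term sequence $\partial[P]$ would vanish. (The Baum--Douglas discussion in the proof of Proposition \ref{prop:reg} concerns representatives of classes in $K_1(\mathscr B_L(\widetilde M)^\Gamma)$ by approximate involutions $F$ with $F^2-1\in\mathscr B(\widetilde M)^\Gamma$, not exact idempotents, and says nothing about lifting $K_0(C^\ast)$ to $K_0(C_L^\ast)$.) Moreover, if $P(t)$ really were an exact idempotent for all $t$, then $w(t)=e^{2\pi i P(t)}\equiv 1$ and $\dot w w^{-1}\equiv 0$, so $\tau_h(w)=0$. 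Your manipulation $\tr_h(\dot w w^{-1})=\pi i\,\tr_h(\dot F)$ then forces $\tr_h(\dot P)\equiv 0$ — which is in fact automatic for a differentiable path of exact idempotents, since $P\dot P P=(1-P)\dot P(1-P)=0$ makes $\dot P$ off-diagonal, so its trace vanishes — and hence $\tr_h(F(\infty))=\tr_h(F(0))$. Your final step therefore reads $0=-\tr_h(P)$, which is only consistent in the trivial case $\tr_h(P)=0$.

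The paper avoids this by taking a \emph{self-adjoint but not idempotent} lift: one chooses $a(t)\in\mathscr B_L(\widetilde M)^\Gamma$ with $a(0)=p$ and $a(t)=0$ for $t\geq 1$ (so the path passes through non-idempotents for $0<t<1$), and sets $u(t)=e^{2\pi i a(t)}$. Then $u(0)=e^{2\pi i p}=1$ and $u(t)=1$ for $t\geq 1$, and the trace property gives $\tr_h(\dot u u^{-1})=2\pi i\,\tr_h(\dot a)$, so
\begin{equation*}
\tau_h(\partial p)=\int_0^\infty \tr_h(\dot a(t))\,dt=\tr_h(a(\infty))-\tr_h(a(0))=-\tr_h(p).
\end{equation*}
Because $u$ is constantly $1$ for $t\geq 1$, it is tautologically a regularized representative in the sense of Proposition \ref{prop:reg}, so no separate regularization step is needed. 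Your overall plan — pushing an idempotent through the connecting map and evaluating $\tau_h$ by a transgression — is the right idea, but the lift must be the squashed-to-zero self-adjoint lift, not an idempotent path.
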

\begin{proof}
	For each $[p]\in  K_0(C_r^\ast(\Gamma))$, recall that $\partial [p]$	is defined as follows: let $\{a(t)\}_{t\in [0, \infty)}$ be a lift of $p$ in $\mathscr B_L(\widetilde M)^\Gamma$ such that $a(t) = 0$ for all $t\geq 1$, in particular, $a(0) = p$,  then 
	\[  \partial p \coloneqq u \textup{ with }   u(t) = e^{2\pi i a(t)} \textup{ for } t\in [0, \infty). \]
	It follows that 
	\[ \tau_h(\partial p) =  \frac{1}{2\pi i}\int_{0}^\infty \tr_h\big(\dot{u}(t)u^{-1}(t)\big) dt = \int_0^\infty \tr_h(\dot a(t))dt = - \tr_h(p).\]
	More precisely, by our construction of the map $\tau_h$, we need to choose a regularized representative of $u$ as in Proposition $\ref{prop:reg}$. Since $u(t) = 1$ for all $t\geq 1$, a regularized representative of $u$ can tautologically\footnote{For example, the local index of the Dirac operator on the empty set gives a constant path of invertible elements $w(t) = 1$.}  be chosen to be itself. 
\end{proof}

\begin{remark}\label{rk:app}
	The same method in this section can be also applied to the following (relative) traces defined on $C_r^\ast(\Gamma)$ or $C_{\max}^\ast(\Gamma)$. Throughout this remark, we do not assume any growth conditions on the group $\Gamma$. 
\begin{enumerate}[(1)]
	\item Let $\sigma_1 \colon \Gamma \to U(n)$ and $\sigma_2 \colon \Gamma \to U(n)$ be two unitary representations of $\Gamma$ of the same dimension. They induces traces $\tr_{\sigma_i}$ on $C_{\max}^\ast(\Gamma)$ by 
	\[  \gamma \mapsto \tr(\sigma_i(\gamma)).  \]
	By using regularized representatives as in Proposition $\ref{prop:reg}$, the relative trace $\tr_{\sigma_1} -\tr_{\sigma_2}$ induces a homomorphism $\tau_{\sigma_1, \sigma_2} \colon K_1(C^\ast_{L,0}(\widetilde M)_{\max}^\Gamma) \to \mathbb C$ by 
	\[ \tau_{\sigma_1, \sigma_2}(u) = \frac{1}{2\pi i}\int_{0}^\infty (\tr_{\sigma_1}-\tr_{\sigma_2})\big(\dot{u}(t)u^{-1}(t)\big) dt. \] A key observation here is again that there exists $\varepsilon >0$ such that\footnote{To be precise, since $C_{\max}^\ast(\widetilde M)^\Gamma \cong C^\ast_{\max}(\Gamma)\otimes \mathcal K$, one needs to pass to an appropriate smooth dense subalgebra of $C^\ast_{\max}(\Gamma)\otimes \mathcal K$ on which the traces $\tr_{\sigma_1}$ and $\tr_{\sigma_2}$ are defined. Such smooth dense subalgebras always exist.  }
	\[ (\tr_{\sigma_1}-\tr_{\sigma_2})(a) = 0 \]
	if the propagation of $a \in C_{\max}^\ast(\widetilde M)^\Gamma$ is less than $\varepsilon$. Combined with the finite propagation speed of wave operators,  this provides a conceptual approach to some results of Keswani \cite{MR1763959},  Piazza and Schick \cite{MR2294190} and  Higson and Roe \cite{MR2761858}. We shall present the details in a separate paper \cite{tang-xie-yao-yu2}.
	 
	\item Let $\nu$ be the $L^2$-trace on the group von Neumann algebra  $\mathcal N\Gamma$ of $\Gamma$. It induces a trace, still denoted by $\nu$,  on $C^\ast_{\max}(\Gamma)$ by the natural map $C^\ast_{\max}(\Gamma) \to C^\ast_r(\Gamma) \to  \mathcal N\Gamma$. Now suppose  $\lambda\colon C^\ast_{\max}(\Gamma) \to \mathbb C$ is the trivial representation. Then the formula
		\[ \rho_{(2)}(u) \coloneqq  \frac{1}{2\pi i}\int_{0}^\infty (\nu-\lambda)\big(\dot{u}(t)u^{-1}(t)\big)  dt \]
		defines a  homomorphism $\rho_{(2)} \colon K_1(C^\ast_{L,0}(\widetilde M)_{\max}^\Gamma) \to \mathbb C$, which is precisely the $L^2$-$\rho$-invariant of Cheeger and Gromov \cite{MR806699}. This provides a more conceptual approach to some results of Keswani \cite{MR1794283} and Benameur and Roy \cite{MR3296587}. Again, the details will be given in \cite{tang-xie-yao-yu2}. 
\end{enumerate}
\end{remark}

\section{Higher rho invariants and  delocalized eta invariants}\label{sec:deloc}

In this section, we shall establish a precise connection between higher rho invariants and delocalized eta invariants. More precisely, let $M$ be  an odd dimensional\footnote{The even dimensional case is completely parallel. For simplicity, we will only discuss the odd dimensional case here.} closed spin manifold equipped with a positive scalar curvature metric. Denote its fundamental group $\pi_1 M$ by $\Gamma$.  Suppose $\widetilde M$ is the universal cover of $M$. We denote the Riemannnian metric lifted to $\widetilde M$ by $\tilde g$. Then the Dirac operator $\widetilde D$ on $\widetilde M$ with respect to $\tilde g$ naturally defines a higher rho invariant $\rho(\widetilde D, \tilde g) \in K_1(C_{L,0}^\ast(\widetilde M)^\Gamma)$. We shall show that if the conjugacy class $\langle h\rangle$ of a non-identity element $h\in \Gamma$ has polynomial growth, then $\tau_h(\rho(\widetilde D, \tilde g))$ is equal to the delocalized eta invariant of Lott. 

Let us briefly recall the construction of $\rho(\widetilde D, \tilde g)\in K_1(C_{L, 0}^\ast(\widetilde M)^\Gamma)$. Recall that
\[ \widetilde D^2 = \nabla^\ast \nabla + \frac{\kappa}{4}, \]
where $\nabla\colon C^\infty(\widetilde M, S) \to C^\infty(\widetilde  M, T^\ast \widetilde M\otimes S)$ is the connection on the  spinor bundle $S$ over $\widetilde  M$, $\nabla^\ast$ is the adjoint of $\nabla$, and $\kappa$ is the scalar curvature of the metric $\tilde g$. By assumption, $\kappa >\varepsilon$ for some $\varepsilon >0$, it follows immediately that $ \widetilde  D$ is invertible in this case. We define
\[  F = \widetilde D|\widetilde D|^{-1}. \]
Now for each $n\in \mathbb N$, let  $\{U_{n, j}\}$ be a $\Gamma$-invariant locally finite open cover\footnote{If $n=0$, we choose the open cover to be $\{\widetilde M\}$ consisting of a single open set $\widetilde M$ itself.} of $\widetilde M$ with $\textup{diameter}(U_{n,j}) < 1/n$ and $\{\phi_{n, j}\}$ a $\Gamma$-invariant partition of unity subordinate to $\{U_{n, j}\}$.   We define 
\begin{equation}
F(t) = \sum_{j} (1 - (t-n)) \phi_{n, j}^{1/2} F \phi_{n,j}^{1/2} + (t-n) \phi_{n+1, j}^{1/2} F \phi_{n+1, j}^{1/2}
\end{equation}
for $t\in [n, n+1]$.  Form the path of unitaries 
\[ u(t) = e^{2\pi i \frac{F(t)+1}{2}}, 0\leq t < \infty.\]
 Note that $\frac{F+1 }{2}$ is a genuine projection, hence  $u(0) = 1$. So the path $u(t), 0\leq t < \infty,$ defines a class in $K_1(C_{L, 0}^\ast(M)^\Gamma)$.

\begin{definition}
	The higher rho invariant $\rho(\widetilde D, \tilde g)$ is defined to be the $K$-theory class
	\[ [u]\in K_1(C_{L, 0}^\ast(M)^\Gamma).\]
	
\end{definition}

Now let us also recall the definition of delocalized eta invariants due to Lott.  
\begin{definition}[{\cite[Definition 7]{MR1726745}}]
	With the above notation, the delocalized eta invariant of $\widetilde D$ at $\langle h \rangle$ is defined to be 
\begin{equation}
\eta_{\langle h\rangle}(\widetilde D) \coloneqq  \frac{2}{\sqrt \pi} \int_{0}^{\infty}  \tr_h(\widetilde D e^{-t^2 \widetilde D^2})dt. 
\end{equation} 
\end{definition}
Here the convergence of the integral does not hold in general, and  relies on the growth rate of the conjugacy class $\langle h \rangle$, cf. \cite[Section 3]{MR2366359} for a more thorough discussion. A sufficient condition for the convergence of the integral is that the conjugacy class $\langle h \rangle$ has polynomial growth.

We have the following main result of this section. 
\begin{theorem}\label{thm:rhoeta}
	Let $M$ be a closed odd-dimensional spin manifold equipped with a positive scalar curvature metric $g$. Suppose $\widetilde M$ is the universal cover of $M$, $\tilde g$ is the Riemannnian metric on $\widetilde M$  lifted from $g$, and $\widetilde D$ the associated Dirac operator. Suppose the conjugacy class $\langle h\rangle$ of a non-identity element $h\in \pi_1(M)$ has polynomial growth.  Then we have
	\[ \tau_h (\rho(\widetilde D, \widetilde g) ) = - \frac{1}{2}\eta_{\langle h\rangle}(\widetilde D).  \]
\end{theorem}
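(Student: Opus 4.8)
The strategy is to compare the two sides by deforming the higher rho invariant's natural representative $u(t)=e^{2\pi i\frac{F(t)+1}{2}}$ into a regularized representative of the kind produced by Proposition~\ref{prop:reg}, and then to evaluate the defining integral \eqref{eq:int} for $\tau_h$ explicitly, recognizing the resulting integrand as (a constant multiple of) the delocalized heat kernel trace $\tr_h(\widetilde D e^{-t^2\widetilde D^2})$. Since $\widetilde D$ is invertible (by the Lichnerowicz--Rosenberg argument recalled just above the statement), the operator $F=\widetilde D|\widetilde D|^{-1}$ is a genuine symmetry, and the path $F(t)$ built from shrinking partitions of unity is a regularizing homotopy from $F$ (at $t=0$, where the cover is $\{\widetilde M\}$, so $F(0)=F$) toward operators of vanishing propagation. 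The first task is therefore to check that, after a harmless reparametrization, the path $w(t)=e^{2\pi i\frac{F(t)+1}{2}}$ is itself (homotopic through regularized representatives to) a representative satisfying conditions $(1)$--$(3)$ of Proposition~\ref{prop:reg}; the key point is that $F(t)^2-1$ and $F'(t)$ lie in the smooth subalgebra $\mathscr B(\widetilde M)^\Gamma$ because $\widetilde D$ has a spectral gap and the partition-of-unity averaging only introduces smoothing-type errors, and that $\tau_h$ does not depend on which regularized representative is used (Proposition~\ref{prop:det}).

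Granting this, I would then compute
\[
\tau_h(\rho(\widetilde D,\widetilde g))=\frac{1}{2\pi i}\int_0^\infty \tr_h\big(\dot w(t) w(t)^{-1}\big)\,dt
=\frac{1}{2\pi i}\int_0^\infty \tr_h\Big(2\pi i\,\tfrac{1}{2}\dot F(t)\,e^{2\pi i\frac{F(t)+1}{2}}e^{-2\pi i\frac{F(t)+1}{2}}\Big)\,dt
=\frac{1}{2}\int_0^\infty \tr_h\big(\dot F(t)\big)\,dt,
\]
using that $w(t)=e^{2\pi i\frac{F(t)+1}{2}}$ and $\dot w(t)w(t)^{-1}=2\pi i\,\tfrac12\dot F(t)$ (this last identity needs a short justification since $F(t)$ and $\dot F(t)$ need not commute, but $\tr_h$ is a trace, so the non-commuting correction terms in the Duhamel expansion of $\tfrac{d}{dt}e^{2\pi i\frac{F(t)+1}{2}}$ cancel cyclically against $e^{-2\pi i\frac{F(t)+1}{2}}$). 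So the whole computation reduces to identifying $\tfrac12\int_0^\infty\tr_h(\dot F(t))\,dt$ with $-\tfrac12\eta_{\langle h\rangle}(\widetilde D)$, i.e.\ to showing $\int_0^\infty \tr_h(\dot F(t))\,dt=-\eta_{\langle h\rangle}(\widetilde D)$. Here I would \emph{not} use the combinatorial partition-of-unity path, but instead invoke the independence of $\tau_h$ on the regularized representative to replace $F(t)$ by a smooth spectral-theoretic path that interpolates between $F=\widetilde D|\widetilde D|^{-1}$ and an operator of small propagation in a way adapted to the heat semigroup — for instance a path of the Getzler/Wassermann type built from $\widetilde D e^{-t^{-2}\widetilde D^2}$, or equivalently the path $\widetilde D(\widetilde D^2+ \epsilon(t))^{-1/2}$ with $\epsilon(t)\to 0$. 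Along such a path, $\frac{d}{dt}\tr_h(F(t))$ becomes, up to the change of variables, exactly $\frac{2}{\sqrt\pi}\tr_h(\widetilde D e^{-t^2\widetilde D^2})$, by the standard identity $F=\widetilde D|\widetilde D|^{-1}=\frac{2}{\sqrt\pi}\int_0^\infty \widetilde D e^{-s^2\widetilde D^2}\,ds$ (valid since $\widetilde D$ is invertible) differentiated in a cutoff parameter; the boundary terms are controlled because $\tr_h$ annihilates operators of propagation $<\varepsilon$ at the $t\to\infty$ end and the integrand is manifestly integrable at the $t\to 0$ end thanks to invertibility.

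The main obstacle, and the step that deserves the most care, is the interchange of the trace $\tr_h$ with the time integral and the justification that all the manipulations above take place inside the smooth dense subalgebra $\mathscr B(\widetilde M)^\Gamma$ on which $\tr_h$ is a continuous trace (Lemma~\ref{lm:tr}). Concretely: one must show that $\widetilde D e^{-t^2\widetilde D^2}$, though not a smoothing operator, has Schwartz kernel with sufficient off-diagonal decay along $\langle h\rangle$ that $\tr_h(\widetilde D e^{-t^2\widetilde D^2})$ is defined and that $\int_0^\infty\tr_h(\widetilde D e^{-t^2\widetilde D^2})\,dt$ converges — this is precisely where the polynomial growth hypothesis on $\langle h\rangle$ enters (via finite-propagation-speed estimates for the wave operator $\cos(s\widetilde D)$, which give Gaussian-type off-diagonal bounds for $e^{-t^2\widetilde D^2}$, together with the spectral gap giving exponential decay in $t$). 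A clean way to package this is to split $F(t)$ as $F_{\mathrm{prop}}(t)+K(t)$ with $F_{\mathrm{prop}}(t)$ of finite propagation and $K(t)$ genuinely in $\mathscr B(\widetilde M)^\Gamma$, show $\tr_h(\dot F_{\mathrm{prop}}(t))$ vanishes once the propagation drops below $\varepsilon$, and apply continuity of $\tr_h$ only to the $\mathscr B(\widetilde M)^\Gamma$-part; this mirrors the convergence argument already given in the proof of Proposition~\ref{prop:det}. The remaining steps — the Duhamel cancellation under the trace, and the elementary identity expressing $F$ as the time-integral of $\widetilde D e^{-s^2\widetilde D^2}$ — are routine once the analytic framework is in place.
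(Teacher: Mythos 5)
Your proposal follows essentially the same route as the paper's proof: the paper also discards the partition-of-unity path in favor of the Getzler-type spectral path $F(t)=\varphi(t^{-1}\widetilde D)$ with $\varphi(x)=\frac{2}{\sqrt\pi}\int_0^x e^{-s^2}\,ds$, reduces $\tr_h(\dot u\,u^{-1})$ to $\pi i\,\tr_h(\dot F)$ (here $F(t)$ and $\dot F(t)$ are both functions of $\widetilde D$, so they commute outright and your Duhamel cancellation is not even needed), and arrives at $-\tfrac12\eta_{\langle h\rangle}(\widetilde D)$ by the change of variables $s=t^{-1}$, citing Lott for absolute convergence. One technical point you gloss over, which the paper treats with care: $\varphi(t^{-1}\widetilde D)$ has \emph{infinite} propagation for every $t$ (the distributional Fourier transform $\hat\varphi$ is not compactly supported), so $u$ is not literally a regularized representative in the sense of Proposition~\ref{prop:reg}, and you cannot simply "invoke independence on the regularized representative." The paper closes this gap in two steps: it proves the Fr\'echet estimate $\|\dot F(t)\|_{\mathscr A_N}\lesssim t^{-2}e^{-\sigma^2/2t^2}$ using the spectral gap $\sigma$ of $\widetilde D$ to show $u-1$ lies in the smooth dense subalgebra $\mathscr A_{L,0}(\widetilde M,\mathcal S)^\Gamma$; and it then constructs a genuine regularized representative $w$ from $G(t)=\chi(t^{-1}\widetilde D)$ with $\hat\chi$ compactly supported, and runs the transgression formula of Proposition~\ref{prop:det} between $w$ and the $\varphi$-path $u$ to identify $\tau_h([u])$ with the integral you computed. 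Your suggested $F_{\mathrm{prop}}+K$ splitting would serve the same purpose at the $t\to\infty$ end, so the proposal is a correct blueprint once this finite-propagation bookkeeping is made explicit.
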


 Before we prove the theorem, let us point out that  the definition of higher rho invariant $\rho(\widetilde D, \tilde g)$ does not require any growth condition on $\langle h\rangle$ or $\pi_1 M$. In fact, if the strong Novikov conjecture holds for $\Gamma = \pi_1(M)$, then we can generalize Lott's delocalized eta invariant \emph{without} any growth conditions of the conjugacy class of $h$. This can be achieved by using the Novikov rho invariant introduced in \cite[Section 7]{Weinberger:2016dq}. Let us briefly recall the construction below, and refer the reader to \cite[Section 7]{Weinberger:2016dq} for more details. Consider the following  commutative diagram: 
 \begin{equation}
 \begin{split}
 \xymatrix{  K^\Gamma_{1}(\underline E\Gamma, \widetilde M)  \ar[r] \ar[d]_{\Lambda}  & K_{0}^\Gamma(\widetilde M) \ar[r] \ar[d]^{\cong}  &  K_{0}^\Gamma(\underline E\Gamma) \ar[r] \ar[d]^{\mu_\ast} & K^\Gamma_{0}(\underline E\Gamma, \widetilde M)  \ar[d]_{\Lambda} \\
 	K_{0}(C_{L, 0}^\ast(\widetilde M)^\Gamma) \ar[r]  & K_{0}(C_{L}^\ast(\widetilde M)^\Gamma) \ar[r] & K_{0}(C^\ast_r(\Gamma))  \ar[r]^-\partial   &  K_{1}(C_{L, 0}^\ast(\widetilde M)^\Gamma)  
 } 
 \end{split}
 \end{equation}
 where $\underline{E}\Gamma$ is the universal space for proper $\Gamma$-actions and  $K^\Gamma_{i}(\underline E\Gamma, \widetilde M) $ is the  $\Gamma$-equivariant relative $K$-homology group for the pair of $\Gamma$-spaces $(\underline E\Gamma, \widetilde M)$. Let us assume  that  $\mu_\ast\colon K_{i}^\Gamma(\underline{E}\Gamma) \to K_{i}(C^\ast_r(\Gamma))$
 is a split injection\footnote{So far, in all known cases where the strong Novikov conjecture holds, the split injectivity of the Baum-Connes assembly map is known to be true as well.}.  In this case, let us denote the splitting map by $\alpha\colon K_{0}(C^\ast_r(\Gamma))\to K_{0}^\Gamma(\underline{E}\Gamma)$ which induces a direct sum decomposition:
 \[  K_{0}(C^\ast_r(\Gamma))  \cong  K_{0}^\Gamma(\underline{E}\Gamma) \oplus \mathscr E. \]  A routine diagram chase shows that
 \begin{enumerate}[(1)]
 	\item the homomorphism $\Lambda\colon K^\Gamma_{0}(\underline E\Gamma, \widetilde M) \to   K_{1}(C_{L, 0}^\ast(\widetilde M)^\Gamma) $
 	is also an injection;
 	\item and $\partial(\mathscr E)\cap \partial(K_{0}^\Gamma(\underline{E}\Gamma) ) = 0 $.   
 \end{enumerate} 
 It follows that  we have the following commutative diagram: 
 \begin{equation}\label{diag:tophr2}
 \begin{split}
 \scalebox{0.9}{\xymatrix{    K_{0}^\Gamma(\widetilde M) \ar[r] \ar[d]^{\cong}  &  K_{0}^\Gamma(\underline E\Gamma) \ar[r] \ar[d]_{\mu_\ast} & K^\Gamma_{0}(\underline E\Gamma, \widetilde M)  \ar[d]^\Lambda \ar[r] & K_{1}^\Gamma (\widetilde M)  \ar[d]^{\cong} \\
 			K_{0}(C_{L}^\ast(\widetilde M)^\Gamma) \ar[r] \ar[d]^{=} & K_{0}^\Gamma(\underline{E}\Gamma) \oplus \mathscr E  \ar[r]^-\partial  \ar[d]^\alpha &  K_{1}(C_{L, 0}^\ast(\widetilde M)^\Gamma) \ar[d]^q \ar[r]	& K_{1}(C_{L}^\ast(\widetilde M)^\Gamma) \ar[d]^{=}  \\
 		 	K_{0}(C_{L}^\ast(\widetilde M)^\Gamma) \ar[r] & K_{0}^\Gamma(\underline{E}\Gamma) \ar[r]^-\partial   &  K_{1}(C_{L, 0}^\ast(\widetilde M)^\Gamma)/\partial(\mathscr E) \ar[r] & 	K_{1}(C_{L}^\ast(\widetilde M)^\Gamma)  &	} 
 }
 \end{split}
 \end{equation}
 where $q$ is the quotient map 
 \[ q\colon  K_{1}(C_{L, 0}^\ast(\widetilde M)^\Gamma)\to  K_1(C_{L, 0}^\ast(\widetilde M)^\Gamma)/\partial(\mathscr E).  \]
 Note that the last row in diagram $\eqref{diag:tophr2}$ is also a long exact sequence. 
 By the five lemma, it follows that the composition \[ q\circ \Lambda \colon K^\Gamma_{0}(\underline E\Gamma, \widetilde M) \xrightarrow{\ \cong \ } K_{1}(C_{L, 0}^\ast(\widetilde M)^\Gamma)/\partial(\mathscr E)\] is an isomorphism.
Now we define 
 \[ \beta \coloneqq  (q\circ \Lambda)^{-1} \circ q \colon K_{1}(C_{L, 0}^\ast(\widetilde M)^\Gamma) \to   K^\Gamma_{0}(\underline E\Gamma, \widetilde M).  \]

Let  $E\Gamma$ be the universal space for free and proper $\Gamma$-actions.  By Composing $\beta$ with the natural morphism  $$ K^\Gamma_{0}(\underline E\Gamma, \widetilde M) \to K^\Gamma_{0}(\underline E\Gamma, E\Gamma)$$ induced by the inclusion $(\underline E\Gamma, \widetilde M) \hookrightarrow (\underline E\Gamma, E\Gamma)$ and the Chern character map \[ K^\Gamma_{0}(\underline E\Gamma, E\Gamma) \to \bigoplus_{k\in \mathbb Z}H^\Gamma_{2k}(\underline E\Gamma, E\Gamma)\otimes \mathbb C,\] 
we get a morphism
 \[ \Theta \colon K_{1}(C_{L, 0}^\ast(\widetilde M)^\Gamma) \to  \bigoplus_{k\in \mathbb Z}H^\Gamma_{2k}(\underline E\Gamma, E\Gamma)\otimes \mathbb C.\] 
 Recall that (cf. \cite{MR928402}, \cite[Section 7]{BCH94})
 \[  \bigoplus_{k\in \mathbb Z}H_{2k}(\underline E\Gamma, E\Gamma)\otimes \mathbb C \quad \cong \bigoplus_{\substack{\langle \gamma\rangle  \\ \gamma \textup{ finite order and } \\  \gamma \neq e } } \bigoplus_{k\in \mathbb Z}H_{2k}(Z_\gamma; \mathbb C),   \]
where $e$ is the identity element of $\Gamma$, $\langle \gamma\rangle $ runs through all conjugacy classes of finite order elements $\gamma$ with $\gamma \neq e$, and $Z_\gamma$ is the  centralizer group of $\gamma$ in $\Gamma$. 
 
\begin{definition}
If the Baum-Connes assembly map for $\pi_1(M)$ is a split injection\footnote{We remark that this definition of generalized delocalized eta invariants depends on the choice of the split injection in general.}, then we  define the generalized delocalized eta invariant $\widetilde D$ at $\langle h\rangle$ to be the complex number in the $H_0(Z_h; \mathbb C)$-component of $\rho(\widetilde D, \tilde g)$ under the map $\Theta$. In particular, if $h$ has infinite order, then the generalized delocalized eta invariant $\widetilde D$ at $\langle h\rangle$ is defined to be zero. 
\end{definition}

Note that if the conjugacy class $\langle h\rangle$ has polynomial growth and in addition the Baum-Connes assembly map is an \emph{isomorphism}, then the above generalized delocalized eta invariant coincides with the delocalized eta invariant of Lott. Indeed, in this case, the map  
\[ \Theta \colon K_{1}(C_{L, 0}^\ast(E\Gamma)^\Gamma)\otimes \mathbb C \to  \bigoplus_{k\in \mathbb Z}H^\Gamma_{2k}(\underline E\Gamma, E\Gamma)\otimes \mathbb C\] 
is an isomorphism. It follows from Lemma $\ref{lm:bdry}$ and Theorem $\ref{thm:rhoeta}$ that the above generalized delocalized eta invariant coincides with the delocalized eta invariant of Lott.

Now let us proceed to prove Theorem $\ref{thm:rhoeta}$.  In order to make the exposition more transparent, we shall work with the following alternative smooth dense subalgebra of $C^\ast(\widetilde M)^\Gamma = C^\ast_r(\Gamma)\otimes \mathcal K$.

Let $\mathscr S(\widetilde M)^\Gamma$ be the convolution algebra of all elements $A\in C^\infty(\widetilde M\times \widetilde M)$ satisfying  
\begin{enumerate}[(1)]
	\item $A$ is $\Gamma$-invariant, that is, $A(gx, gy) = A(x, y)$ for all $g\in \Gamma$, 
	\item $A$ has finite propagation, that is, there exists $R>0$ such that $A(x, y) = 0$ for all $x, y\in \widetilde M$ with $d(x, y) \geq R$. 
\end{enumerate}
The algebra $\mathscr S(\widetilde M)^\Gamma$ acts on $L^2(\widetilde M)$ by 
\[  (Af)(x) = \int_{\widetilde M} A(x, y)f(y)dy, \]
for $A \in \mathscr S(\widetilde M)^\Gamma$ and $f\in L^2(\widetilde M)$.

Fix a point $x_0\in \widetilde M$ and  let $\sigma  \colon \widetilde M\to \mathbb R$ be the distance function $\sigma(x) = d(x, x_0)$ on $\widetilde M$. In fact, we shall choose a smooth approximation $\sigma_1$ of  $\sigma$ such that $|\sigma_1(x) - \sigma(x)| < 1$ and $\|d\sigma_1(x)\| \leq 2$ for all $x\in \widetilde M$. For notational simplicity, we shall continue to denote this modified distance function by $\sigma$. Multiplication by the function $\sigma$ acts as an unbounded operator on $L^2(\widetilde M)$.  Taking commutator with $\sigma$ defines a derivation on $\mathscr S(\widetilde M)^\Gamma$:
\[ \widetilde\partial = [\sigma, \cdot] \colon  \mathscr S(\widetilde M)^\Gamma \to \mathscr S(\widetilde M)^\Gamma.   \]

Now let $\nDelta$ be the Laplace operator  on $\widetilde M$ and $r$ an integer  $> \dim M$. We define 
\begin{equation}\label{eq:alg}
\mathscr A(\widetilde M)^\Gamma = \{ A\in C^\ast(\widetilde M)^\Gamma \mid \widetilde \partial^k(A) \circ (\nDelta+1)^{r} \textup{ is bounded for } \forall k \in \mathbb N\}. 
\end{equation} 
The same proof from \cite[Lemma 6.4]{CM90} shows that $\mathscr A(\widetilde M)^\Gamma$ contains $\mathscr S(\widetilde M)^\Gamma$  and is closed under holomorphic functional calculus. 

We associate to each element $h\in \Gamma$ the following trace on $\mathscr S(\widetilde M)^\Gamma$  : 
\[ \tr_h(A) = \sum_{g\in \langle h \rangle} \int_{\mathcal F} A(x, gx) dx   \] 
where $\mathcal F$ is a fundamental domain of $\widetilde M$ under the action of $\Gamma$.
Here we have identified $L^2(\widetilde M)$ with $L^2(\mathcal F) \otimes \ell^2(\Gamma)$ through the mapping $f \to \hat f$ by the formula $\hat f(x, \alpha) = f(\alpha x)$ for $x \in \mathcal F$ and $\alpha \in \Gamma$. In particular, each element $A\in \mathscr S(\widetilde M)^\Gamma$ becomes a finite sum $\sum_{g\in \Gamma} (A_g) R_g$, where $
	A_g(x, y) = A(x, gy)$ for $x, y\in \mathcal F$ and  $R$ denotes the right regular representation of $\Gamma$.

%If we identify $L^2(\widetilde M)$ with $L^2(\mathcal F)\otimes \ell^2(\Gamma)$, then each element $A \in \mathscr S(\widetilde M)^\Gamma$ becomes a finite sum:
%\[  A  = \sum_{g \in \Gamma } A_g \cdot g \]
%with $A_g(x, y) = A(x, g y)$ for $x, y\in \mathcal F$. In particular, we have $\tr_h(A) = \sum_{g\in \langle h \rangle} \tr(A_g)$. 

The following  lemma and its proof are essentially the same as Lemma $\ref{lm:tr}$. We shall be brief.  
\begin{lemma}
	If $\langle h\rangle $ has polynomial growth, then	$\tr_h$ extends to a continuous trace on  $\mathscr A(\widetilde M)^\Gamma$.  
\end{lemma}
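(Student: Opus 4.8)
The plan is to mimic the proof of Lemma~\ref{lm:tr} almost verbatim, with the operator $I\otimes\Delta$ on the $\ell^2(\mathbb N)$-factor replaced by the elliptic operator $(\nDelta+1)^r$ on $L^2(\widetilde M)$, and with the two independent decay directions there (the matrix index $j$ and the group-length $|g|$) replaced by the single decay mechanism $(\nDelta+1)^{-r}$ together with the group-length weight coming from $\widetilde\partial^k=[\sigma,\cdot]^k$. First I would fix $A\in\mathscr A(\widetilde M)^\Gamma$ and write $A$ as (a limit of) operators of the form $\sum_{g\in\Gamma}A_gR_g$ on $L^2(\mathcal F)\otimes\ell^2(\Gamma)$ as in the excerpt, so that $\tr_h(A)=\sum_{g\in\langle h\rangle}\int_{\mathcal F}A(x,gx)\,dx=\sum_{g\in\langle h\rangle}\operatorname{tr}_{L^2(\mathcal F)}(A_g)$; the task is to show this sum converges and is bounded by a continuous seminorm. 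The key estimate is that the boundedness of $\widetilde\partial^k(A)\circ(\nDelta+1)^r$ for all $k$ forces, for each $g\in\Gamma$, a bound on $\|(1+|g|)^k A_g\circ(\nDelta+1)^r\|_{op}$ (more precisely on $\|[\sigma,\cdot]^k(A)\circ(\nDelta+1)^r\|$ restricted to the $g$-component, using that $\sigma$ differs from the word-length distance by a bounded amount so that on the range of $R_g$ the commutator contributes a factor comparable to $|g|$); since $r>\dim M$, the operator $(\nDelta+1)^{-r}$ is trace-class on $L^2(\mathcal F)$ with a trace norm depending only on $\widetilde M$, so $|\operatorname{tr}_{L^2(\mathcal F)}(A_g)|\le \|(\nDelta+1)^{-r}\|_1\cdot\|A_g\circ(\nDelta+1)^r\|_{op}\le C_k(1+|g|)^{-k}\|A\|_{(k)}$ for a suitable Fréchet seminorm $\|\cdot\|_{(k)}$.

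Summing over $g\in\langle h\rangle$ and invoking the polynomial growth hypothesis $\sharp\{g\in\langle h\rangle:|g|\le n\}\le Cn^d$, choosing any $k>d+1$ makes $\sum_{g\in\langle h\rangle}(1+|g|)^{-k}<\infty$, so $|\tr_h(A)|\le C'\,\|A\|_{(k)}$, which proves that $\tr_h$ is a continuous linear functional on $\mathscr A(\widetilde M)^\Gamma$ and hence extends from $\mathscr S(\widetilde M)^\Gamma$. The trace property $\tr_h(AB)=\tr_h(BA)$ is then handled exactly as in Lemma~\ref{lm:tr}: verify it by a direct computation on the dense subalgebra $\mathscr S(\widetilde M)^\Gamma$ (where everything is a finite sum over $\Gamma$ and the conjugation-invariance of $\langle h\rangle$ is exactly what makes the $h$-localized sum cyclic), and then extend to all of $\mathscr A(\widetilde M)^\Gamma$ by continuity and density.

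The main obstacle — and the only place where the argument is genuinely different from Lemma~\ref{lm:tr} rather than a formal transcription — is the first step: extracting, from the single global bound $\|\widetilde\partial^k(A)\circ(\nDelta+1)^r\|_{op}<\infty$, the per-$g$ weighted bound $\|(1+|g|)^k A_g\circ(\nDelta+1)^r\|_{op}\lesssim\|A\|_{(k)}$. One must be careful that $\sigma$ is only a Lipschitz (indeed smoothed) approximation of the distance to a basepoint, not a group-invariant word metric, so the identification ``$[\sigma,\cdot]$ acting on the $g$-summand $\approx$ multiplication by something of size $|g|$'' requires the estimates $|\sigma_1-\sigma|<1$, $\|d\sigma_1\|\le 2$ recorded just before the lemma, plus the $\Gamma$-cocompactness to compare $d(x,x_0)$ and $d(gx,x_0)$ with $|g|$ up to an additive constant (coarse geometry of the orbit map). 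Once this comparison is in place, the rest is the same trace-class estimate plus Cauchy–Schwarz / summability bookkeeping as in Lemma~\ref{lm:tr}, so I would present the lemma's proof by pointing to Lemma~\ref{lm:tr} and only spelling out this comparison and the replacement of $(I\otimes\Delta)^2$ by $(\nDelta+1)^r$.
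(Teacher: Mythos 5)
Your overall plan is sound and you correctly identify the crux, but the key estimate you propose --- $\|(1+|g|)^{k}A_g\circ(\nDelta+1)^{r}\|_{\mathrm{op}}\lesssim\|A\|_{(k)}$ --- is not established and in fact cannot be extracted from $\|\widetilde\partial^{k}(A)\circ(\nDelta+1)^{r}\|_{\mathrm{op}}$ by restricting to ``$g$-blocks'' the way you suggest. Two things go wrong. First, $\sigma$ is not $\Gamma$-invariant, so $[\sigma,A]$ does not decompose as $\sum_g B_g\otimes R_g$: on the $(\alpha,\alpha g)$-block its kernel is $(\sigma(\alpha x)-\sigma(\alpha g y))A_g(x,y)$, which depends on $\alpha$, and the factor $|\sigma(\alpha x)-\sigma(\alpha g y)|$ is \emph{not} bounded below by $|g|-C$ uniformly in $\alpha$ (it is only bounded \emph{above} by $d(\alpha x,\alpha g y)+2$). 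The lower bound $|\sigma(x)-\sigma(g x)|\geq |g|-\diam(\mathcal F)$ holds only for $x\in\mathcal F$, i.e.\ at the specific diagonal points entering the trace, not for the whole operator block. Second, even on the $(e,g)$-block the multiplier $(\sigma(x)-\sigma(gy))^k$ is a nonconstant function of $(x,y)$, so a pointwise lower bound on it does not let you ``divide'' to get an operator-norm bound on $A_g$; and in any case $(\nDelta+1)^{r}$ does not commute with the projections $P_{g\mathcal F}$, so $P_{\mathcal F}\widetilde\partial^{k}(A)(\nDelta+1)^{r}P_{g\mathcal F}$ is not $\bigl(P_{\mathcal F}\widetilde\partial^{k}(A)P_{g\mathcal F}\bigr)\circ(\nDelta+1)^{r}$.

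The paper avoids this entirely by not working at the level of operator norms on blocks. From $\widetilde\partial^{k}(A)\circ(\nDelta+1)^{r}$ bounded with $r>\dim M$, Sobolev embedding gives that the Schwartz kernel $\widetilde\partial^{k}(A)(x,y)$ is a uniformly bounded continuous function, with sup-norm controlled by the operator norm. One then uses the \emph{pointwise} identity $\widetilde\partial^{2k}(A)(x,gx)=(\sigma(x)-\sigma(gx))^{2k}A(x,gx)$ together with $|\sigma(x)-\sigma(gx)|\geq|g|-\diam(\mathcal F)$ for $x\in\mathcal F$ to get $(1+|g|)^{4k}\int_{\mathcal F}|A(x,gx)|^{2}\,dx\lesssim\int_{\mathcal F}|\widetilde\partial^{2k}(A)(x,gx)|^{2}\,dx$, and finishes with the Cauchy--Schwarz and polynomial-growth bookkeeping as in Lemma~\ref{lm:tr}. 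This pointwise route is precisely where the diagonal restriction $x\mapsto(x,gx)$, $x\in\mathcal F$, does all the work, and is what your block/operator-norm argument cannot reproduce. Your treatment of the trace identity $\tr_h(AB)=\tr_h(BA)$ by density and continuity is correct and matches the paper.
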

\begin{proof}
	Let $A$ be an element in $\mathscr A(\widetilde M)^\Gamma$.  By assumption, $\widetilde \partial^k(A)\circ (\nDelta+1)^{r}$ is bounded for all $k\in \mathbb N$.   It follows from Sobolev embedding theorem that the Schwartz kernel $\widetilde \partial^k(A)(x, y)$  of $\widetilde \partial^k(A)$ is a uniformly bounded continuous function on $\widetilde M \times \widetilde M$ for each $k\in \mathbb N$.   We define 
	\begin{equation}
	\tr_h(A) =  \sum_{g\in \langle h \rangle} \int_{\mathcal F} A(x, gx) dx 
	\end{equation} 
	We need to verify that the summation on the right side converges. Observe that
	\[  \widetilde \partial^{2k} (A)(x, gx) = \big(\sigma(x) - \sigma(gx)\big)^{2k}A(x, gx), \]
	and furthermore $|\sigma(x) - \sigma(gx)|\geq |g| -\textup{diam}(\mathcal F)$ for all $x\in \mathcal F$, where $\textup{diam}(\mathcal F)$ is the diameter of $\mathcal F$. 	It follows that there exists a fixed constant $C_k >0$ such that 
	\begin{equation}
	(1+|g|)^{4k}\int_{\mathcal F} |A(x, gx)|^2 dx  \leq C_k\int_{\mathcal F} |\widetilde \partial^{2k} (A)(x, gx)|^2 dx. 
	\end{equation}
	Now 
	for all $k\in \mathbb N$, we have 
	\begin{align*}
	|\tr_h(A)|  & \leq \sum_{g\in \langle h \rangle} \int_{\mathcal F} | A(x, gx)| dx    \\
	&  \leq \Big(\sum_{g\in \langle h \rangle} (1+|g|)^{2k}\int_{\mathcal F}|A(x, gx)|^2dx\Big)^{1/2}\Big( \sum_{g\in \langle h \rangle} (1+|g|)^{-2k}\Big)^{1/2}\\
	& \leq C_k^{1/2} \Big(\sum_{g\in \langle h \rangle} (1+|g|)^{-2k}\int_{\mathcal F}|\widetilde \partial^{2k}(A)(x, gx)|^2dx\Big)^{1/2}\Big( \sum_{g\in \langle h \rangle} (1+|g|)^{-2k}\Big)^{1/2},
	\end{align*}		
	which is finite for $k$ sufficiently large, since $\langle h\rangle $ has polynomial growth and $\widetilde \partial^{2k}A(x, gx)$ is uniformly bounded for all $g$. 
	Now a similar argument\footnote{Note that there exists a fixed constant $\Lambda_{j}$ such that  the supremum norm of the continuous function $\widetilde \partial^{j}(A)(x, gy)$ is $\leq  \Lambda_{j}\cdot  \|\widetilde \partial^{j}(A)\circ (\nDelta+1)^{r}\|$ for all $A\in \mathscr A(\widetilde M)^\Gamma$. } as in Lemma $\ref{lm:tr}$ shows that $\tr_h$ is a continuous map and  $\tr_h(AB) = \tr_h(BA)$ for $A, B\in \mathscr A(\widetilde M)^\Gamma$.  
\end{proof}

Let $\mathcal S$ be the associated spinor bundle on $\widetilde M$ and $\mathcal S^\ast$ its dual bundle. Consider the bundle $\textup{End}(\mathcal S) = p_1^\ast (\mathcal S)\otimes p_2^\ast (\mathcal S^\ast)$ on $\widetilde M\times \widetilde M$, where $p_i\colon \widetilde M\times \widetilde M \to \widetilde M$ is the projection onto the first and second component respectively. There is a natural diagonal action of $\Gamma$ on $\textup{End}(\mathcal S)$.   Define $C^\infty(\widetilde M\times \widetilde M, \textup{End}(\mathcal S))$ to be the set of all smooth sections of the bundle $\textup{End}(\mathcal S)$ over $\widetilde M\times \widetilde M$.   Let $\mathscr S(\widetilde M, \mathcal S)^\Gamma$ be the convolution algebra of all $\Gamma$-invariant finite propagation elements in $C^\infty(\widetilde M\times \widetilde M, \textup{End}(\mathcal S))$. The algebra $\mathscr S(\widetilde M, \mathcal S)^\Gamma$ acts on $L^2(\widetilde M, \mathcal S)$ by 
\[  (Af)(x) = \int_{\widetilde M} A(x, y)f(y)dy, \]
for $A \in \mathscr S(\widetilde M, \mathcal S)^\Gamma$ and $f\in L^2(\widetilde M, \mathcal S)$, where $L^2(\widetilde M, \mathcal S)$ is the space of $L^2$-sections of $\mathcal S$ over $\widetilde M$.

Recall that the $\Gamma$-equivariant Roe algebra $C^\ast(\widetilde M)^\Gamma$ is (up to isomorphism) independent of the choice of admissible $(\widetilde M, \Gamma)$-modules. We shall still denote the $\Gamma$-equivariant Roe algebra obtained from the $(\widetilde M, \Gamma)$-module $L^2(\widetilde M, \mathcal S)$ by $C^\ast(\widetilde M)^\Gamma$.  Similar to line $\eqref{eq:alg}$,  we define 
\[ \mathscr A(\widetilde M, \mathcal S)^\Gamma = \{ A\in C^\ast(\widetilde M)^\Gamma \mid \widetilde \partial^k(A) \circ (\widetilde D^{2n} + 1) \textup{ is bounded for } \forall k \in \mathbb N\}. \] 
where $\widetilde D$ is the Dirac operator on $\widetilde M$ and $n$ is a fixed integer $> \dim M$.
As before, we can similarly define the algebras $\mathscr A_L(\widetilde M, \mathcal S)^\Gamma$ and $\mathscr A_{L,0}(\widetilde M, \mathcal S)^\Gamma$ as follows 

\begin{definition}\label{def:alg}
	We define $\mathscr A_L(\widetilde M, \mathcal S )^\Gamma$ to be the dense subalgebra of $C_L^\ast(\widetilde M)^\Gamma$ consisting of elements $f\in C_L^\ast(\widetilde M)^\Gamma$ such that  $f$ is piecewise smooth and $f(t) \in \mathscr A(\widetilde M, \mathcal S)^\Gamma$ for all $t\in [0, \infty)$. Also, we define $\mathscr A_{L, 0}(\widetilde M, \mathcal S)^\Gamma$ to be the kernel of the evaluation map 
	\[ \ev\colon  \mathscr A_L(\widetilde M, \mathcal S )^\Gamma \to  \mathscr A(\widetilde M, \mathcal S)^\Gamma \textup{ defined by } f\mapsto f(0).\]
\end{definition}

Recall that a smooth function $\varphi$ on $\mathbb R$ is called a Schwartz function if the function $x^k \varphi^{(j)}(x)$  is bounded on $\mathbb R$ for each $k, j\in \mathbb N$, where $\varphi^{(j)}$ is the $j$-th derivative of $\varphi$.    

\begin{proposition}\label{prop:schwartz}
	Suppose $\varphi$ is a Schwartz function, then $\varphi(\widetilde D)$ is an element in $ \mathscr A(\widetilde M, \mathcal S)^\Gamma$.  
\end{proposition}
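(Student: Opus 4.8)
The plan is to show that $\varphi(\widetilde D)$ lies in $C^\ast(\widetilde M)^\Gamma$ and that $\widetilde\partial^k(\varphi(\widetilde D))\circ(\widetilde D^{2n}+1)$ is bounded for every $k$. The first point is standard: by the Fourier inversion formula write $\varphi(\widetilde D)=\frac{1}{2\pi}\int_{\mathbb R}\hat\varphi(s)\,e^{is\widetilde D}\,ds$, and invoke finite propagation speed of the wave operators $e^{is\widetilde D}$ (propagation $\leq|s|$) together with the fact that $\widetilde D$ is a $\Gamma$-equivariant first order elliptic operator, so $\varphi(\widetilde D)$ is $\Gamma$-invariant, locally compact, and — since $\hat\varphi$ is Schwartz, hence integrable — norm-approximable by finite propagation operators; thus $\varphi(\widetilde D)\in C^\ast(\widetilde M)^\Gamma$.

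The substantive step is the estimate on $\widetilde\partial^k(\varphi(\widetilde D))\circ(\widetilde D^{2n}+1)$. First I would handle the commutators with $\sigma$: since $[\sigma,e^{is\widetilde D}]$ can be computed via Duhamel, $[\sigma,e^{is\widetilde D}]=\int_0^s e^{i(s-u)\widetilde D}\,i[\sigma,\widetilde D]\,e^{iu\widetilde D}\,du$, and $[\sigma,\widetilde D]=c(d\sigma)$ is a bounded bundle endomorphism (bounded because $\|d\sigma\|\leq 2$ by our choice of smoothed distance function), one gets $\|[\sigma,e^{is\widetilde D}]\|\leq C|s|$. Iterating, $\|\widetilde\partial^k(e^{is\widetilde D})\|\leq C_k|s|^k$. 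Hence $\widetilde\partial^k(\varphi(\widetilde D))=\frac{1}{2\pi}\int\hat\varphi(s)\,\widetilde\partial^k(e^{is\widetilde D})\,ds$ is bounded because $s^k\hat\varphi(s)$ is integrable (again Schwartz). To incorporate the extra factor $(\widetilde D^{2n}+1)$, note $\widetilde\partial^k(\varphi(\widetilde D))\circ(\widetilde D^{2n}+1)=\widetilde\partial^k\big((\widetilde D^{2n}+1)\varphi(\widetilde D)\big)-\big[\text{lower-order commutator terms}\big]$, and $(\widetilde D^{2n}+1)\varphi(\widetilde D)=\psi(\widetilde D)$ where $\psi(x)=(x^{2n}+1)\varphi(x)$ is again Schwartz; the commutator correction terms, expanded by the Leibniz rule for $\widetilde\partial$, are each of the form $\widetilde\partial^{j}(\widetilde D^{2n}+1)\cdot\widetilde\partial^{k-j}(\varphi(\widetilde D))$ with $j\geq 1$, and since $\widetilde\partial$ applied to the differential operator $\widetilde D^{2n}+1$ produces a differential operator of order $\leq 2n-1$ (whose coefficients involve iterated brackets with $c(d\sigma)$), one reduces to boundedness of $\big(\text{order }\leq 2n-1\text{ operator}\big)\circ\widetilde\partial^{k-j}(\varphi(\widetilde D))$, which follows from the same Fourier-transform argument applied to $x^{2n-1}\varphi(x)$ and elliptic regularity. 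Carrying this bookkeeping out cleanly — keeping track of which Schwartz function multiplies which power and verifying that every commutator correction lands among functions to which the wave-operator estimate applies — is the part that needs the most care.

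The main obstacle I anticipate is precisely this interplay between the algebraic derivation $\widetilde\partial=[\sigma,\cdot]$ and the unbounded multiplier $\widetilde D^{2n}+1$: one must argue on a common dense domain (e.g. compactly supported smooth sections, or Schwartz sections, which are preserved by $e^{is\widetilde D}$ and by $\sigma$) so that all the manipulations — Duhamel's formula, moving $(\widetilde D^{2n}+1)$ through, the Leibniz expansion — are legitimate, and only at the end conclude boundedness of the resulting operator and extend by density. A secondary technical point is justifying differentiation under the integral sign in $\varphi(\widetilde D)=\frac{1}{2\pi}\int\hat\varphi(s)e^{is\widetilde D}ds$ when applying $\widetilde\partial$; this is fine because the bound $\|\widetilde\partial^k(e^{is\widetilde D})\|\leq C_k|s|^k$ together with $s^k\hat\varphi(s)\in L^1$ gives norm-convergence of the differentiated integral, so one may pass $\widetilde\partial^k$ inside. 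Once these domain issues are dispatched, the proof is essentially a finite-propagation-plus-Schwartz-decay argument, mirroring the structure already used for $\mathscr A(\widetilde M)^\Gamma$ in line \eqref{eq:alg} and \cite[Lemma 6.4]{CM90}.
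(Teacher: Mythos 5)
Your approach matches the paper's on the two key ingredients: the Duhamel/wave-operator estimate $\|\widetilde\partial^k(e^{is\widetilde D})\|\leq C_k|s|^k$ for iterated commutators with $\sigma$, and the observation that $x^m\varphi(x)$ remains Schwartz so that $\varphi(\widetilde D)\widetilde D^m$ and its $\widetilde\partial$-derivatives are bounded. Where your bookkeeping for the trailing factor $\widetilde D^{2n}+1$ differs: the paper treats only $k=1$ explicitly, writing $[\sigma,\varphi(\widetilde D)]\widetilde D^{2n}=[\sigma,\varphi(\widetilde D)\widetilde D^{2n}]-\varphi(\widetilde D)[\sigma,\widetilde D^{2n}]$, handling the first summand by Schwartz functional calculus, and inserting $(\widetilde D^{2n}+1)(\widetilde D^{2n}+1)^{-1}$ into the second so that each term factors as a Schwartz function of $\widetilde D$ times a bounded operator; the higher $k$ are left to induction. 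Your Leibniz expansion of $\widetilde\partial^k$ applied to the product is an equally valid and arguably more systematic way to organize that induction. One correction to the way you wrote it: the definition of $\mathscr A(\widetilde M,\mathcal S)^\Gamma$ requires $\widetilde\partial^k(A)\circ(\widetilde D^{2n}+1)$ bounded with the unbounded factor on the \emph{right}, so you should expand $\widetilde\partial^k\bigl(\varphi(\widetilde D)(\widetilde D^{2n}+1)\bigr)$, in which the term with all $k$ derivations on $\varphi(\widetilde D)$ is exactly the target $\widetilde\partial^k(\varphi(\widetilde D))(\widetilde D^{2n}+1)$; the expansion of $\widetilde\partial^k\bigl((\widetilde D^{2n}+1)\varphi(\widetilde D)\bigr)$ that you wrote instead produces $(\widetilde D^{2n}+1)\widetilde\partial^k(\varphi(\widetilde D))$, which differs from the target by further commutators you would then have to control. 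This is a bookkeeping fix, not a conceptual gap; with it, your argument and the paper's land in the same place.
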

\begin{proof}
	Recall that 
	\[ \varphi(\widetilde D) = \frac{1}{2\pi}\int_{-\infty}^\infty \hat{\varphi}(s)e^{is\widetilde D} ds, \]
	where $\hat\varphi$ is the Fourier transform of $\varphi$, which is also a Schwartz function, since $\varphi$ is. 
	Let us first verify that  $[\varphi(\widetilde D), \sigma]$ is a bounded operator. 
	Consider 
	\[ f(s) = e^{is\widetilde D} A e^{-is\widetilde D} - A. \]
	Then 
	\[  f^\prime(s) =  e^{is\widetilde D} i\widetilde DA e^{-is\widetilde D} - e^{is\widetilde D} A(i\widetilde D) e^{-isD}= ie^{is\widetilde D} [\widetilde D, A] e^{-is\widetilde D}\]
	and $f(0) = 0$, which implies that
	\[ f(s) = e^{is\widetilde D} A e^{-is\widetilde D} - A =   i\int_0^s e^{it\widetilde D} [\widetilde D, A] e^{-it\widetilde D}dt,\]
	or equivalently,  
	\[ [e^{is\widetilde D}, A] =   i\int_0^s e^{it\widetilde D} [\widetilde D, A] e^{i(s-t)\widetilde D}dt.  \]
	It follows that 
	\[ [\varphi(\widetilde D), \sigma] = \frac{i}{2\pi}\int_{-\infty}^\infty \hat{\varphi}(s)\int_0^s e^{it\widetilde D} [\widetilde D, \sigma] e^{i(s-t)\widetilde D}dt ds. \]
	Note that $\|e^{it\widetilde D} [\widetilde D, \sigma] e^{i(s-t)\widetilde D}\| \leq \big\|[\widetilde D, \sigma]\big\|$ for all $s, t\in \mathbb R$. We see that  
	\[ \left\|\int_0^s e^{it\widetilde D} [\widetilde D, \sigma] e^{i(s-t)\widetilde D}dt\right\| \leq s\cdot \big\|[\widetilde D, \sigma]\big\|. \]
	This implies that $[\varphi(\widetilde D), \sigma]$ has finite operator norm, since the Fourier transform $\hat{\varphi}$ is a Schwartz function. Now a straightforward inductive argument shows that $\widetilde \partial^k (\varphi(\widetilde D))$ is a  bounded operator for each $k\in \mathbb N$. 
	
	Now let us show that $\widetilde \partial^k(\varphi(\widetilde D))\circ \widetilde D^{2n}$ is bounded. Note that
	\[  [\sigma, \varphi(\widetilde D)]\widetilde D^{2n} = [\sigma,\varphi(\widetilde D) \widetilde D^{2n}] -  \varphi(\widetilde D) [\sigma, \widetilde D^{2n}].  \]
	By the same argument as above, the operator $[\sigma, \varphi(\widetilde D) \widetilde D^{2n}]$ is bounded, since  $\varphi(\widetilde D) \widetilde D^{2n} = \psi(\widetilde D)$, where $\psi(x) = \varphi(x) x^{2n}$  is a Schwartz function. Also, observe that 
	\begin{align*}
	\varphi(\widetilde D) [\sigma, \widetilde D^{2n}] &= \sum_{k=0}^{n-1}\varphi(\widetilde D)\widetilde D^k [\sigma, \widetilde D]\widetilde D^{n-k-1} \\
	& = \sum_{k=0}^{n-1}\left(\varphi(\widetilde D)\widetilde D^k (\widetilde D^{2n} +1)\right)  (\widetilde D^{2n} +1)^{-1}[\sigma, \widetilde D]\widetilde D^{2n-k-1}.
	\end{align*}  
	Note that $(\widetilde D^{2n} +1)^{-1}[\sigma, \widetilde D]\widetilde D^{2n-k-1}$ is bounded for all $ 0\leq k \leq 2n-1$. It follows that $\varphi(\widetilde D) [\sigma, \widetilde D^{2n}]$ is bounded. This finishes the proof. 
\end{proof}

Now let us prove Theorem $\ref{thm:rhoeta}$. 

\begin{proof}[Proof of Theorem $\ref{thm:rhoeta}$]
	Recall that  a smooth normalizing function $\phi\colon \mathbb R \to [-1, 1]$ is an odd function such that  $\phi(x) >0$ when $x>0$,  and $\phi (x) \to \pm$ as $x \to \pm \infty$. Consider the normalizing function 
	\[ \varphi(x) = \frac{2}{\sqrt \pi}\int_{0}^x e^{-s^2}ds.   \]
	Define $F(t) = \varphi(t^{-1}\widetilde D)$ for $t\in (0, \infty)$. Since the scalar curvature of $\widetilde g$ is uniformly bounded below by a positive number, it follows that $\widetilde D$ is invertible. In particular, there is a spectral gap near $0$ in the spectrum of $\widetilde D$. This implies that $F(t)$ converges to $\textup{sign}(\widetilde D) = \widetilde D|\widetilde D|^{-1}$ in operator norm, as $t\to 0$. Define $F(0) = \textup{sign}(\widetilde D)$. The path 
	\[  u(t) = e^{2\pi i \frac{F(t) + 1}{2}} \textup{ with } t\in [0, \infty) \]
	defines an element in $K_1(C_{L, 0}^\ast(\widetilde M)^\Gamma)$. Indeed, by construction, we have $u(0) = 1$. Moreover, we have 
		\[ \varphi(\widetilde D) = \frac{1}{2\pi}\int_{-\infty}^\infty \hat{\varphi}(s)e^{is\widetilde D} ds, \]
	where $\hat\varphi$ is the Fourier transform of $\varphi$.
Since every smooth normalizing function can be approximated (in supremum norm) by smooth normalizing functions whose distributional Fourier transform has compact support. It follows from functional calculus and finite propagation of the wave operator $e^{is\widetilde D}$  that the path $u$ can be uniformly approximated by paths of invertible elements  with finite propagation. Observe that, for each smooth normalizing function $\psi$, the operator   $\psi(\widetilde D)^2 - 1$ is locally compact, hence $\frac{\psi(\widetilde D) +1}{2}$ is a projection modulo locally compact operators. This implies that   $e^{2\pi i \frac{\psi(\widetilde D) + 1}{2}} \equiv 1$ modulo locally compact operators.   To summarize, we see that 
the path $u$ defines an invertible element in  $(C_{L, 0}^\ast(\widetilde M)^\Gamma)^+$ . Moreover, it is not difficult to see that $1 - \exp(2\pi i\frac{\varphi + 1}{2})$ is a Schwartz function.  By Proposition $\ref{prop:schwartz}$,  it follows that $u(t)  \in (\mathscr A(\widetilde M, \mathcal S)^\Gamma)^+$ for each $t\in [0, \infty)$. To show that  $u$  is an invertible element in $\mathscr (\mathscr A_{L, 0}(\widetilde M, \mathcal S)^\Gamma)^+$, it suffices to show that $u-1$ is  smooth with respect to the Fr\'echet topology of $\mathscr A(\widetilde M, \mathcal S)^\Gamma$. 

For each $N\in \mathbb N$, we define the algebra 
\[ \mathscr A_N = \{ A\in C^\ast(\widetilde M)^\Gamma \mid \widetilde \partial^k(A) \circ (\widetilde D^{2n} + 1) \textup{ is bounded for } \forall k \leq N\} \] and a norm on $\mathscr A_N $ by
\[ \|A\|_{\mathscr A_N }= \sum_{k= 0}^N \frac{1}{k!} \big\|\widetilde \partial^k(A)\circ (\widetilde D^{2n} +1)\big\|_{op}  \]
where $\|\widetilde \partial^k(A)\circ (\widetilde D^{2n} +1)\big\|_{op}$ stands for the operator norm of $\widetilde \partial^k(A)\circ (\widetilde D^{2n} +1)$.  By definition, we have $\mathscr A(\widetilde M, \mathcal S)^\Gamma = \bigcap_{N\in \mathbb N} \mathscr A_N$, which becomes a Fr\'echet algebra under this sequence of norms $\{\|\cdot \|_{\mathscr A_N }\colon  N\in \mathbb N\}$. 

Since $\widetilde \partial$ is a derivation,  it is straightforward to see that $\mathscr A_N$ is closed under holomorphic functional calculus. Therefore, it suffices to show that $u-1$ is smooth with respect to the above norm $\|\cdot \|_{\mathscr A_N}$  for each $N\in \mathbb N$. 

Recall that $\widetilde D$ is invertible. Let  $\sigma >0$ be the spectral gap of $\widetilde D$ at zero, that is, the spectrum of $\widetilde D$ is disjoint from the interval $(-\sigma, \sigma)$. Since $\mathscr A_N$ is closed under holomorphic functional calculus, the spectral radius of $e^{-\widetilde D^2}$ in $\mathscr A_N$ is $e^{-\sigma^2}$. 
Now apply the spectral radius formula
$$\lim_{n\to\infty}\big\|(e^{-\widetilde D})^n\big\|_{\mathscr A_N}^{\frac 1 n}=e^{-\sigma^2}.$$
 It follows that there exists  $C_1>0$ such that
$$\big\|e^{-s\widetilde D^2}\big\|_{\mathscr A_N}\leqslant C_1 \cdot e^{-s\sigma^2/2}$$
for all sufficiently large  $s\gg0$. 
Therefore, there exists $C >0$ such that 
\begin{equation}\label{eq:estimate}
\begin{split}
\| \pi i \dot F(t)\|_{\mathscr A_N} =&
\big\|-2 i \sqrt\pi t^{-2}\widetilde D e^{-\widetilde D^2/t^2}\big\|_{\mathscr A_N}\\
\leqslant& t^{-2}\big\|2\sqrt\pi  \widetilde D e^{-\widetilde D^2}\big\|_{\mathscr A_N}\cdot\big\| e^{-(1/t^2-1)\widetilde D^2}\big\|_{\mathscr A_N}\\
\leqslant & C t^{-2} e^{-\frac{1}{2}\sigma^2/t^2},
\end{split}
\end{equation}
where $\dot F(t)$ is the derivative of $F(t)$ with respect to $t$. 
Note that  we have 
$$u(t)-1=\exp\left(\pi i \int_0^t \dot F(r) dr \right)-1.$$ It follows that, for sufficiently small $t>0$,
\begin{equation}\label{6.4}
\begin{split}
\|u(t)-1\|_{\mathscr A_N}=&\left\|\sum_{n=1}^\infty\frac{1}{n!}\left(\int_0^t \pi i \dot F(r) dr \right)^n\right\|_{\mathscr A_N}	\\
\leqslant& \sum_{n=1}^\infty \frac{1}{n!}\left(Ct^{-2}e^{-\frac{1}{2}\sigma^2/t^2}\right )^n=	
\exp\left( C t^{-2}e^{-\frac{1}{2}\sigma^2/t^2}\right) -1,
\end{split}
\end{equation}
where the last term goes to $0$ as $t\to 0$. This implies that $u-1$ is continuous with respect to the norm $\|\cdot\|_{\mathscr A_N}$. Now take the derivatives of $u-1$, and a similar argument also shows that the derivatives of $u-1$ are continuous with respect to the norm $\|\cdot\|_{\mathscr A_N}$.  Therefore,  $u-1$ is smooth with respect to the norm $\|\cdot\|_{\mathscr A_N}$. This proves that $u$  is an invertible element in $\mathscr (\mathscr A_{L, 0}(\widetilde M, \mathcal S)^\Gamma)^+$.

	 Observe that 
	\[  \frac{1}{2\pi i}\int_{0}^\infty \tr_h( \dot u(t) u^{-1}(t)) dt  = \frac{-1}{\sqrt \pi} \int_{0}^{\infty}  \tr_h(\widetilde D e^{-t^2 \widetilde D^2})dt. \]
	It follows from the work of Lott \cite[Section 4, Page 20]{MR1726745} that the above integral  converges absolutely.

	On the other hand, our definition of $\tau_h (\rho(\widetilde D, \widetilde g) )$ is expressed in terms of  regularized representatives of $\rho(\widetilde D, \widetilde g)$ (cf. Proposition $\ref{prop:reg}$ and Definition $\ref{def:reg}$).  To be precise,  let us choose a specific regularized representative as follows.  Let $\chi$ be a smooth normalizing function such that the support of its distributional Fourier transform $\hat \chi$ has compact support. If  we denote $ G(t) = \chi(t^{-1}\widetilde D)$, then the path $w$ defined by
	\[ w(t)=\begin{cases}
	u(t) &0\leqslant t\leqslant 1\\
	e^{2\pi i\frac{(2-t)F(1)+(t-1) G(1) +1}{2} } 
	& 1\leqslant t\leqslant 2\\
	e^{2\pi i\frac{G(t-1)+1}{2}} & t\geqslant 2
	\end{cases} 
	\] 
	is a regularized representative of $[u]$ in the sense of Proposition $\ref{prop:reg}$.   Now a similar calculation as the transgression formula $\eqref{eq:trans}$ in Proposition $\ref{prop:det}$ shows that 
	\[  \frac{1}{2\pi i}\int_{0}^\infty \tr_h( \dot w_t w^{-1}_t) dt = \frac{1}{2\pi i}\int_{0}^\infty \tr_h( \dot u_t u^{-1}_t) dt. \]
	Here let us briefly comment on the convergence of various terms that appear in the  transgression formula. Observe that there exists $\varepsilon >0$ such that $\tr_h(a) = 0$ for $a\in \mathscr A(\widetilde M, \mathcal S)^\Gamma$, as long as the propagation of $a$ is less than $\varepsilon$. Now the convergence of various terms that appear in the transgression formula  follows from this observation and the work of Lott \cite[Section 4, Page 20]{MR1726745}. To summarize, we have proved that 
	\[ \tau_h (\rho(\widetilde D, \widetilde g) ) = - \frac{1}{2}\eta_{\langle h\rangle}(\widetilde D). \]
	This finishes the proof. 
\end{proof}

\section{Baum-Connes conjecture and algebraicity of delocalized eta invariants}\label{sec:bcalg}

In this section, we prove an algebraicity result concerning the values of delocalized eta invariants. We also give a $K$-theoretic proof of a version of delocalized  Atiyah-Patodi-Singer index theorem.

\begin{definition}
	Given a discrete group $\Gamma$, let $\mathbb Q_\Gamma$ be the field extension of $\mathbb Q$ by the following set of roots of unity:
	\[  \{e^{\pi i/n} \mid \textup{ there exists $\alpha \in \Gamma$ such that the order of $\alpha$ is $n$ }\}. \]
\end{definition}

We have the following algebraicity result concerning the values of delocalized eta invariants. 

\begin{theorem}
Assume the same notation as in Theorem $\ref{thm:rhoeta}$. If the rational Baum-Connes conjecture holds for $\Gamma$, that is, the assembly map $\mu\colon K_i^\Gamma(\underline{E}\Gamma) \to K_i(C_r^\ast(\Gamma))$ is a rational isomorphism, and the conjugacy class $\langle h\rangle$ of a non-identity element $h\in \Gamma$ has polynomial growth,  then
	$\eta_{\langle h\rangle}(\widetilde D)$  is an element in $\mathbb Q_\Gamma$.  If in addition $h$ has infinite order, then 	$\eta_{\langle h\rangle}(\widetilde D) = 0$.
	
\end{theorem}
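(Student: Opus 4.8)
By Theorem~\ref{thm:rhoeta} we have $\eta_{\langle h\rangle}(\widetilde D)=-2\,\tau_h(\rho(\widetilde D,\tilde g))$, so it suffices to show that $\tau_h(\rho(\widetilde D,\tilde g))$ lies in $\mathbb Q_\Gamma$, and that it vanishes when $h$ has infinite order. Tensor all $K$-theory groups with $\mathbb Q$ from now on; since $\mathbb Q\subseteq\mathbb Q_\Gamma$ this is harmless. The role of the rational Baum--Connes hypothesis is to pull the higher rho invariant down to the relative $K$-homology of $(\underline E\Gamma,\widetilde M)$: rationally, the assembly map is an isomorphism, so the group $\mathscr E$ appearing in diagram~\eqref{diag:tophr2} (a complement of the image of $\mu_\ast$) becomes rationally trivial, hence $\partial(\mathscr E)\otimes\mathbb Q=0$, the quotient map $q$ is a rational isomorphism, and since $q\circ\Lambda$ is an isomorphism by the discussion following diagram~\eqref{diag:tophr2}, the map
\[\Lambda\colon K_0^\Gamma(\underline E\Gamma,\widetilde M)\otimes\mathbb Q\longrightarrow K_1(C_{L,0}^\ast(\widetilde M)^\Gamma)\otimes\mathbb Q\]
is a rational isomorphism. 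Thus there is a class $\xi\in K_0^\Gamma(\underline E\Gamma,\widetilde M)\otimes\mathbb Q$ with $\rho(\widetilde D,\tilde g)=\Lambda(\xi)$, and so $\tau_h(\rho(\widetilde D,\tilde g))=\tau_h(\Lambda(\xi))$.

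The next step is to evaluate $\tau_h(\Lambda(\xi))$ by a fixed point computation. Write $\xi$ as a rational combination of Baum--Douglas relative cycles; up to the usual manipulations each such cycle is a $\Gamma$-cocompact $\spin^c$ manifold $W$ with boundary, carrying an auxiliary bundle, together with a $\Gamma$-equivariant map $(W,\partial W)\to(\underline E\Gamma,\widetilde M)$. Because $\Gamma$ acts freely on $\widetilde M$, equivariance forces $\Gamma$, and in particular $\langle h\rangle$, to act freely on $\partial W$ as well. By the construction of $\Lambda$, $\Lambda(\xi)$ is the secondary higher invariant of the Dirac operator on $W$ relative to its boundary, and the delocalized Atiyah--Patodi--Singer index theorem for the determinant map $\tau_h$ (Proposition~\ref{thm:aps}, whose proof rests on the $L^2$-Lefschetz fixed point theorem of B.-L. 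Wang and H. Wang, \cite[Theorem 5.10]{MR3454548}) expresses $\tau_h(\Lambda(\xi))$ as a sum of local index densities integrated over the $h$-fixed submanifold $W^{h}$, minus the corresponding delocalized eta term of $\partial W$. The boundary term is identically zero, since $(\partial W)^{h}=\emptyset$. The surviving bulk term is, by the Atiyah--Segal--Singer fixed point formula in its proper/delocalized form, a $\mathbb Q$-linear combination of the roots of unity $e^{\pi i/n}$ attached to the finite-order element $h$ --- hence an element of $\mathbb Q_\Gamma$. Summing over the rational combination of cycles gives $\tau_h(\Lambda(\xi))\in\mathbb Q\cdot\mathbb Q_\Gamma=\mathbb Q_\Gamma$.

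If $h$ has infinite order, then, the $\Gamma$-action on $\underline E\Gamma$ being proper and all stabilizers therefore finite, every $W$ above satisfies $W^{h}=\emptyset$, so the fixed point formula yields $\tau_h(\Lambda(\xi))=0$ and thus $\eta_{\langle h\rangle}(\widetilde D)=0$. (The same conclusion can be read off the map $\Theta$: $\Theta(\rho(\widetilde D,\tilde g))$ is the Chern character of a rational $K$-homology class, hence lies in $\bigoplus_k H_{2k}^\Gamma(\underline E\Gamma,E\Gamma)\otimes\mathbb Q$, and extracting the $H_0(Z_h;\mathbb C)$-component from the decomposition indexed by finite-order conjugacy classes is a $\mathbb Q_\Gamma$-linear operation, which returns $0$ precisely when $\langle h\rangle$ does not occur there and a $\mathbb Q_\Gamma$-valued quantity otherwise.)

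I expect the real work to be in the middle step: realizing $\Lambda(\xi)$ as the relative higher rho invariant of an explicit equivariant geometric cycle and establishing the delocalized APS formula for $\tau_h$ at that level --- that is, reducing the analytically defined integral $\tau_h(\Lambda(\xi))$ to the Wang--Wang fixed point integral --- and then verifying carefully that each fixed point contribution genuinely decomposes as a $\mathbb Q$-linear combination of roots of unity lying in $\mathbb Q_\Gamma$. Once that is in place, the reduction in the first step and the vanishing in the third are routine diagram chasing on top of Theorem~\ref{thm:rhoeta}, Lemma~\ref{lm:bdry}, and the machinery already developed.
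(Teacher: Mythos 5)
Your overall strategy --- reduce $\tau_h(\rho(\widetilde D,\tilde g))$ to a fixed-point computation via the rational Baum--Connes hypothesis and the Wang--Wang $L^2$-Lefschetz theorem, with the vanishing for infinite-order $h$ coming from emptiness of the fixed-point set --- is the same as the paper's. But your execution takes a detour through relative $K$-homology, and there is a genuine gap in that detour.

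The gap: you claim the boundary eta term $\eta_{\langle h\rangle}(\widetilde D_{\partial W})$ vanishes because $\Gamma$ acts freely on $\partial W$, so $(\partial W)^h=\emptyset$. This is false. The delocalized eta invariant is not a fixed-point quantity; it is a global spectral invariant whose definition involves a sum over the whole conjugacy class $\langle h\rangle$, and it is typically nonzero precisely for free cocompact actions --- that is the entire content of Lott's construction (and of Theorem~\ref{thm:rhoeta} itself, which pairs a nontrivial $\tau_h$ against the rho invariant of a \emph{free} cover). So the claim ``boundary term $=0$ since $(\partial W)^h=\emptyset$'' does not hold, and with it the bookkeeping that was supposed to produce $\tau_h(\Lambda(\xi))=\int_{W^h}$ breaks down. (In fact, for a relative cycle $(W,\partial W)$ of the kind you describe, the boundary eta term is essentially the quantity you are trying to bound, so the argument as written is circular rather than reductive.)

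The paper sidesteps the boundary entirely. Rational Baum--Connes plus the rational injectivity of $K_i(C_L^\ast(E\Gamma)^\Gamma)\to K_i(C_L^\ast(\underline E\Gamma)^\Gamma)$ shows that the connecting map $\partial\colon K_0(C_r^\ast(\Gamma))\otimes\mathbb Q\to K_1(C_{L,0}^\ast(E\Gamma)^\Gamma)\otimes\mathbb Q$ is surjective, so one may write $\rho(\widetilde D,\tilde g)=\partial(p)$ with $p\in K_0(C_r^\ast(\Gamma))$, and Lemma~\ref{lm:bdry} gives $\tau_h(\rho)=-\tr_h(p)$ directly. Then $p$ is represented (rationally) by the higher index $\ind_\Gamma(D_E)$ of a Dirac operator on a \emph{closed} proper cocompact $\Gamma$-$spin^c$ manifold in the absolute Baum--Douglas model for $K_0^\Gamma(\underline E\Gamma)$, and Corollary~\ref{cor:alg} applies immediately. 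No manifold with boundary, no delocalized APS theorem, no eta term to control. If you insist on going through the relative picture, you would need an independent argument that the boundary contribution lies in $\mathbb Q_\Gamma$; the cleaner move is to use $\partial$-surjectivity as the paper does.
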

\begin{proof}
	Consider the following long exact sequence: 
	\[\xymatrix{ K_0(C_{L, 0}^\ast(E\Gamma)^\Gamma)\otimes \mathbb Q \ar[r] &  K_0(C_L^\ast(E\Gamma)^\Gamma) \otimes \mathbb Q\ar[r]^-\mu  & K_0(C_{r}^\ast(\Gamma))\otimes \mathbb Q \ar[d]^\partial  \\
		K_1(C_r^\ast(\Gamma))\otimes \mathbb Q\ar[u]	& K_1(C_{L}^\ast(E\Gamma)^\Gamma)\otimes \mathbb Q\ar[l] & K_1(C_{L, 0}^\ast(E\Gamma)^\Gamma)\otimes \mathbb Q \ar[l] }  \]
	Recall that the morphism $  K_i(C_L^\ast(E\Gamma)^\Gamma)  \to K_i(C_L^\ast(\underline{E}\Gamma)^\Gamma) $ induced by the natural inclusion from $E\Gamma$ to $\underline{E}\Gamma$ is rationally injective (cf. \cite[Section 7]{BCH94}).
	It follows that, if the rational Baum-Connes conjecture holds for $\Gamma$, that is, the assembly map $\mu\colon K_i(C_L^\ast(\underline{E}\Gamma)^\Gamma) \to K_i(C_r^\ast(\Gamma))$ is rationally isomorphic,  then the map $\mu$ is injective and the map $\partial$ is surjective. In particular, rationally every element $u\in K_1( C_{L,0}^\ast(E\Gamma)^\Gamma)$ is the image of some $p \in K_0( C_r^\ast(\Gamma)) $  under the map $\partial$. By Lemma $\ref{lm:bdry}$, we have 
	\[  \tau_h(u) = -\tr_h(p). \]
	Moreover, the map $\tau_h\colon K_1( C_{L,0}^\ast(\widetilde M)^\Gamma) \to \mathbb C$ factors through   $K_1( C_{L,0}^\ast(E\Gamma)^\Gamma)$.  Therefore, 	the image of the map $\tau_h\colon K_1( C_{L,0}^\ast(\widetilde M)^\Gamma) \to \mathbb C$	is (up to multiplication by rational numbers) equal to the image of the map $\tr_h \colon K_0(C_r^\ast(\Gamma)) \to \mathbb C$. 
	
	By Theorem $\ref{thm:rhoeta}$, it suffices to show that image of the map $\tr_h \colon K_0(C_r^\ast(\Gamma)) \to \mathbb C$ is contained in $\mathbb Q_\Gamma$. By using the Baum-Douglas model of $K$-homology,  $K_0(C_L^\ast(\underline{E}\Gamma)^\Gamma)$ is generated by $(M, E, \varphi)$, where $M$ is a complete $spin^c$ manifold equipped with a proper and cocompact $\Gamma$-action, $E$ is a $\Gamma$-equivariant bundle over $M$, and $\varphi\colon M\to \underline{E}\Gamma$ is a $\Gamma$-equivariant map. In this case, the Baum-Connes assembly map takes $(M, E, \varphi)$ to its higher index $\ind_\Gamma(D_E)$, where $D_E$ is the associated Dirac operator on $M$ twisted by $E$. Now by Corollary $\ref{cor:alg}$ in the appendix of our paper,  $\tr_h(\ind_\Gamma(D_E))$ is an algebraic number in $\mathbb Q_\Gamma$, and  if in addition $h$ has infinite order, then $\tr_h(\ind_\Gamma(D_E)) =0$. This finishes the proof.

\end{proof}

In light of the above theorem, we propose the following question. 

\begin{question}
	What values can delocalized eta invariants take in general? Are they always algebraic numbers?
\end{question}

In particular, if a delocalized eta invariant is transcendental, then it would lead to a counterexample to the Baum-Connes conjecture. Note that the above question is a reminiscent of Atiyah's question concerning rationality of $\ell^2$-Betti numbers \cite{MR0420729}. Atiyah's question was answered in negative by Austin, who showed that $\ell^2$-Betti numbers can be transcendental \cite{MR3149852}.

Now let us turn to a delocalized Atiyah-Patodi-Singer index theorem. Let  $W$ be a compact $n$-dimensional spin manifold with boundary $\partial W$. Suppose $W$ is equipped with a Riemannian metric $g_W$ which has product structure near $\partial W$ and in addition has positive scalar curvature on $\partial W$. Let $\widetilde W$ be the universal covering of $W$ and $g_{\widetilde W}$  the   Riemannian metric  on $\widetilde W$ lifted from $g_W$. Denote $\pi_1(W)$ by $\Gamma$. With respect to the metric $g_{\widetilde W}$,  the associated Dirac operator $\widetilde D$ on $\widetilde  W$ naturally defines a higher index, denoted by $\ind_\Gamma(\widetilde D, g_{\widetilde W})$, in $K_n(C^\ast(\widetilde W)^\Gamma) = K_n(C_r^\ast(\Gamma))$, cf. \cite[Section 3]{Xie2014823}. Let $\tilde g_\partial$ be the restriction of $g_{\widetilde W}$ on $\partial \widetilde{W}$. As we have seen above,  with respect to the metric $\tilde g_\partial$,  the associated Dirac operator $\widetilde D_\partial$ on $\partial {\widetilde W}$ naturally defines a higher rho invariant $\rho(\widetilde D_\partial, \tilde g_{\partial} )$ in $K_{n-1}(C_{L, 0}^\ast(\partial\widetilde W)^\Gamma)$. If no confusion is likely to arise,  the image of $\rho(\widetilde D_\partial, \tilde g_{\partial} )$  in $K_{n-1}(C_{L, 0}^\ast(\widetilde W)^\Gamma)$ under the natural morphism $ K_{n-1}(C_{L, 0}^\ast(\partial\widetilde W)^\Gamma) \to K_{n-1}(C_{L, 0}^\ast(\widetilde W)^\Gamma)$ will still be denoted by $\rho(\widetilde D_\partial, \tilde g_{\partial} )$. 

We denote by $\partial\colon K_n(C^\ast(\widetilde W)^\Gamma)  \to  K_{n-1}(C_{L,0}^\ast(\widetilde W)^\Gamma)$  the connecting map in the K-theory long exact sequence induced by the short exact sequence of $C^\ast$-algebras: 
\[ 0 \to C_{L,0}^\ast(\widetilde W)^\Gamma \to  C_L^\ast(\widetilde W)^\Gamma  \to  C^\ast(\widetilde W)^\Gamma \to 0.\]  
Then we have 	\[  \partial ( \ind_\Gamma(\widetilde D, g_{\widetilde W})) =  \rho(\widetilde D_\partial, \tilde g_{\partial} ) \textup{ in }  K_{n-1}(C_{L,0}^\ast(\widetilde W)^\Gamma), \]
cf.  \cite[Theorem 1.14]{MR3286895}, \cite[Theorem A]{Xie2014823}.

We have the following version of the delocalized Atiyah-Patodi-Singer index theorem. 

\begin{proposition}\label{thm:aps}
Let  $W$ be a compact even-dimensional spin manifold with boundary $\partial W$. Suppose $W$ is equipped with a Riemannian metric $g_W$ which has product structure near $\partial W$ and in addition has positive scalar curvature on $\partial W$. If the conjugacy class $\langle h\rangle$ of a non-identity element $h\in \Gamma = \pi_1(W)$ has polynomial growth, then 
	\[ \tr_h(\ind_\Gamma(\widetilde D, g_{\widetilde W})) =  \frac{\eta_{\langle h\rangle}(\widetilde D_\partial)}{2}.  \] 
\end{proposition}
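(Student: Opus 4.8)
The plan is to deduce Proposition \ref{thm:aps} by combining three facts already in hand: the boundary formula $\partial(\ind_\Gamma(\widetilde D, g_{\widetilde W})) = \rho(\widetilde D_\partial, \tilde g_\partial)$ recalled just above the statement, the commutativity of the diagram in Lemma \ref{lm:bdry} relating $\tr_h$ and $\tau_h$ through the connecting map, and the identification $\tau_h(\rho) = -\tfrac12\eta_{\langle h\rangle}$ of Theorem \ref{thm:rhoeta}. Since $W$ is even-dimensional, $n$ is even, so $\ind_\Gamma(\widetilde D, g_{\widetilde W})\in K_n(C^\ast(\widetilde W)^\Gamma)=K_0(C_r^\ast(\Gamma))$ and $\rho(\widetilde D_\partial,\tilde g_\partial)$ lives in $K_{n-1}(C_{L,0}^\ast(\partial\widetilde W)^\Gamma)=K_1(C_{L,0}^\ast(\partial\widetilde W)^\Gamma)$, so all the maps from Sections \ref{sec:second}--\ref{sec:deloc} apply in the expected degrees.

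First I would observe that the entire construction of Section \ref{sec:second} --- the smooth dense subalgebras, the determinant map $\tau_h\colon K_1(C_{L,0}^\ast(\,\cdot\,)^\Gamma)\to\mathbb C$, and Lemma \ref{lm:bdry} --- only uses a free, proper, cocompact, isometric $\Gamma$-action on the relevant manifold, and never uses simple-connectivity. Hence it applies verbatim with $\widetilde W$ in place of $\widetilde M$, and also to the $\Gamma$-cover $\partial\widetilde W\to\partial W$ (which is the pullback of $\widetilde W\to W$ along $\partial W\hookrightarrow W$, not the universal cover of $\partial W$ in general, but still carries a free cocompact $\Gamma$-action). Applying Lemma \ref{lm:bdry} for $\widetilde W$ to $p=\ind_\Gamma(\widetilde D, g_{\widetilde W})$ gives
\[ \tau_h\big(\partial(\ind_\Gamma(\widetilde D, g_{\widetilde W}))\big) = -\tr_h\big(\ind_\Gamma(\widetilde D, g_{\widetilde W})\big), \]
and substituting the boundary formula yields $\tau_h\big(\rho(\widetilde D_\partial,\tilde g_\partial)\big) = -\tr_h\big(\ind_\Gamma(\widetilde D, g_{\widetilde W})\big)$, where here $\tau_h$ is the determinant map on $K_1(C_{L,0}^\ast(\widetilde W)^\Gamma)$.

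The remaining point is to compute $\tau_h$ of $\rho(\widetilde D_\partial,\tilde g_\partial)$ directly on the closed odd-dimensional manifold $\partial W$. For this I would check naturality: the morphism $K_1(C_{L,0}^\ast(\partial\widetilde W)^\Gamma)\to K_1(C_{L,0}^\ast(\widetilde W)^\Gamma)$ induced by the inclusion of a collar intertwines the two determinant maps, since the trace $\tr_h$ on the relevant smooth dense subalgebras is induced by the same conjugacy-class sum and a regularized representative in the sense of Proposition \ref{prop:reg} supported near $\partial\widetilde W$ pushes forward to one on $\widetilde W$ computing the same integral in \eqref{eq:int}. Consequently $\tau_h\big(\rho(\widetilde D_\partial,\tilde g_\partial)\big)$, evaluated on $\widetilde W$, equals its value on $\partial\widetilde W$, and Theorem \ref{thm:rhoeta} (whose proof, as noted, goes through for any free cocompact $\Gamma$-cover) applied to the closed odd-dimensional positive-scalar-curvature manifold $\partial W$ gives $\tau_h\big(\rho(\widetilde D_\partial,\tilde g_\partial)\big) = -\tfrac12\,\eta_{\langle h\rangle}(\widetilde D_\partial)$. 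Combining with the displayed equation above,
\[ -\tr_h\big(\ind_\Gamma(\widetilde D, g_{\widetilde W})\big) = -\tfrac12\,\eta_{\langle h\rangle}(\widetilde D_\partial), \]
which is the asserted identity.

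The main obstacle is not conceptual but the verification that the determinant-map machinery genuinely transports from closed manifolds to the manifold-with-boundary setting and is natural under $\partial\widetilde W\hookrightarrow\widetilde W$; concretely, one must either set up the Connes--Moscovici-type algebra $\mathscr B(\widetilde W)^\Gamma$ (or the $\mathscr A$-type algebra of Section \ref{sec:deloc} built from the Laplacian or a Dirac-type operator with suitable boundary behaviour) so that the inclusion of a collar neighborhood of $\partial\widetilde W$ carries the boundary smooth dense subalgebra into that of $\widetilde W$, or --- more cleanly --- exploit that $\rho(\widetilde D_\partial,\tilde g_\partial)$ and $\tau_h$ of it depend only on an arbitrarily small neighborhood of $\partial\widetilde W$, thereby reducing every step to the closed manifold $\partial W$ where Sections \ref{sec:conjpol}--\ref{sec:deloc} apply without change. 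The only other subtlety, the non-universality of the $\Gamma$-cover $\partial\widetilde W\to\partial W$, is harmless since none of Lemma \ref{lm:bdry}, the construction of $\tau_h$, or the proof of Theorem \ref{thm:rhoeta} uses that the cover is simply connected.
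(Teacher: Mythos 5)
Your proposal is correct and follows the same route as the paper: combine the boundary formula $\partial(\ind_\Gamma(\widetilde D, g_{\widetilde W})) = \rho(\widetilde D_\partial, \tilde g_\partial)$ with the commutativity in Lemma \ref{lm:bdry} and the identity $\tau_h(\rho) = -\tfrac12\eta_{\langle h\rangle}$ of Theorem \ref{thm:rhoeta}. The paper's own proof is a one-line citation of these facts; your extra care about applying the machinery to $\widetilde W$ (a manifold with boundary), the naturality of $\tau_h$ under $\partial\widetilde W\hookrightarrow\widetilde W$, and the non-universality of the cover $\partial\widetilde W\to\partial W$ addresses exactly the implicit points the paper leaves to the reader.
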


\begin{proof}
	Since 
	\[  \partial ( \ind_\Gamma(\widetilde D, g_{\widetilde W})) =  \rho(\widetilde D_\partial, \tilde g_{\partial} ) \textup{ in }  K_{1}(C_{L,0}^\ast(\widetilde W)^\Gamma), \]
	the statement follows immediately from Theorem $\ref{thm:rhoeta}$ and Lemma $\ref{lm:bdry}$. 
\end{proof}

\begin{remark}
	When $\Gamma$ is virtually nilpotent, a similar result has been proved at the level of noncommutative de Rham homology by Leichtnam and Piazza \cite[Theorem 14.1]{MR1488084}. 
\end{remark}

\appendix
\section{ $L^2$-Lefschetz fixed point theorem}

In this appendix, we shall briefly review a modified version of the $L^2$-Lefschetz fixed point theorem of B.-L. Wang and H. Wang 
 for proper actions of discrete groups on complete manifolds \cite[Theorem 5.10]{MR3454548}.

Let $X$ be a complete $spin^c$ manifold of dimension $n$, and $E$ a Hermitian vector bundle over $X$. Suppose a discrete group $\Gamma$ acts properly and cocompactly on $X$  through isometries. Moreover, assume this action lifts to actions on the associated $spin^c$ bundle $S$ and the bundle $E$ over $X$ through isometric bundle morphisms. 

For each $h\in \Gamma$, let $X^h = \{ x\in X\mid h\cdot x = x\}$ the fixed point set of $h$. Denote the normal bundle of $X^h$ by $N$. The bundle $N$ admits a decomposition 
\[  N = N(\pi) \oplus \bigoplus_{0 < \theta < \pi} N(\theta) \]
where the differential $d_h$ of the map $h$ acts on $N(\pi)$ by multiplication by $-1$, and for each $0< \theta < \pi$, $N(\theta)$ is a complex bundle in which $d_h$ acts by multiplication by $e^{i\theta}$. Since $h$ is orientation preserving, the bundle $N(\pi)$ is an  oriented even-dimensional real bundle. 

The $L^2$-Lefschetz fixed point theorem will be expressed in terms of characteristic classes as follows. We refer to \cite[Chapter III, Section 14]{BLMM89} for more details. If $V$ is a complex vector bundle with formal splitting $V = \ell_1\oplus \dots \oplus \ell_k$  into line bundles with the corresponding first Chern class denoted by $c_1(\ell_j) = x_j$, then for each $0< \theta < \pi$, we define 
\[ \hat A_{\theta}(V) = 2^{-k}\prod_{j=1}^{k}  \frac{1}{\sinh \frac{1}{2}(x_j + i\theta)} = \prod_{j=1}^{k}\frac{e^{\frac{1}{2}(x_j + i\theta)}}{e^{(x_j+i\theta)} - 1} \]
When $\theta = \pi$,  we define a characteristic class $\hat A_{\pi}(V)$ for any oriented real $2k$-dimensional bundle as follows. Let $V = V_1\oplus \dots \oplus V_k$  be a formal splitting into oriented $2$-plane bundles, and set $x_j = \chi(V_j)$ the Euler class of $V_j$. We define 
\[ \hat A_{\pi}(V) = 2^{-k}\prod_{j=1}^{k}  \frac{1}{\sinh \frac{1}{2}(x_j + i\pi)} = (2i)^{-k} \prod_{j=1}^{k}\frac{1}{\cosh(x_j/2)}. \]

Let $\ell$ be the associated line bundle for the $spin^c$-structure of $X$, and $c_1 = c_1(\ell)$ its first Chern class. Suppose $d_h$ acts on $\ell$ by multiplication by $e^{i\beta}$.

Furthermore, $X^h$ consists of  Let $D_E$ be the associated Dirac operator twisted by $E$ on $X$  and denote its higher index by $\ind_\Gamma(D_E) \in K_n(C^\ast_r(\Gamma))$.  We have the following modified version of $L^2$-Lefschetz fixed point theorem of B.-L. Wang and H. Wang \cite[Theorem 5.10 \& Theorem 6.1]{MR3454548}.

\begin{theorem}
	With the same notation as above,  if the conjugacy class $\langle h \rangle $ of $h\in \Gamma$ has polynomial growth, then 
	\begin{equation}\label{eq:loc}
	\tr_h(\ind_\Gamma(D_E)) =  \int_{\mathcal F} \Big( \prod_{0 < \theta \leq \pi} \hat A_\theta(N(\theta))\cdot \hat A(X^h) \cdot  e^{\frac{c_1 + i\beta }{2}}\cdot \textup{ch}(E)\Big). 
	\end{equation} 
Here  $ \tr_h(\ind_\Gamma(D_E))$ stands for  the evaluation of the linear map $\tr_h\colon K_0(C_r^\ast(\Gamma)) \to \mathbb C$ on the higher index class $\ind_\Gamma(D_E)$, and  $\mathcal F$ is a fundamental domain of $X^h$ under the action of  $Z_h$, where $Z_h$ is the centralizer group of $h$ in $\Gamma$. 
\end{theorem}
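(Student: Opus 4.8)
The plan is to adapt the proof of the $L^2$-Lefschetz fixed point formula of B.-L.\ Wang and H.\ Wang \cite[Theorem 5.10 \& Theorem 6.1]{MR3454548}; the only substantive new point is that the relevant trace here is $\tr_h$, attached to a conjugacy class of polynomial growth rather than to a finite-order element with finite conjugacy class. First I would dispose of the case in which $h$ has infinite order: since the $\Gamma$-action on $X$ is proper, every stabilizer is finite, so $X^h=\emptyset$ and the right-hand side of \eqref{eq:loc} is the integral over the empty set, hence $0$; on the other side, cocompactness gives a uniform bound $d(x,gx)\geq\delta>0$ for all $x\in X$ and all $g\in\langle h\rangle$, which together with the polynomial growth of $\langle h\rangle$ and the Gaussian off-diagonal decay of the heat kernel forces $\tr_h(\ind_\Gamma(D_E))=0$ as well. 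From now on $h$ has finite order, so that $X^h/Z_h$ is compact and $\mathcal F$ is well defined.

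For the main computation I would represent $\ind_\Gamma(D_E)\in K_0(C^\ast_r(\Gamma))$ by a Connes-Moscovici-type heat-kernel idempotent built from $e^{-tD_E^2}$ and elementary rational functions of $D_E$; for every $t>0$ its entries are $\Gamma$-invariant smoothing operators, hence lie in the smooth dense subalgebra of $C^\ast(X)^\Gamma\cong C^\ast_r(\Gamma)\otimes\mathcal K$ on which $\tr_h$ is defined and continuous (Lemma \ref{lm:tr} and its analogue for $X$). Applying $\tr_h$ and unwinding the identification $\tr_h(A)=\sum_{g\in\langle h\rangle}\int_{\mathcal F}A(x,gx)\,dx$ from Section \ref{sec:deloc}, one finds that
\[ \tr_h(\ind_\Gamma(D_E)) = \sum_{g\in\langle h\rangle}\int_{\mathcal F}\mathrm{str}\,\bigl(e^{-tD_E^2}\bigr)(x,gx)\,dx \]
is independent of $t>0$, the $t$-independence being the McKean--Singer identity, valid term by term and after summation because $\tr_h$ kills graded commutators of smoothing operators. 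I would then study the limit $t\to 0^+$: the kernel $e^{-tD_E^2}(x,gx)$ is $O(t^\infty)$ uniformly outside any neighborhood of $X^g=\{x : gx=x\}$, so each term localizes to a tubular neighborhood of $X^g$, and --- reindexing $g=g_0hg_0^{-1}$ and combining the sum over $\langle h\rangle$ with $\int_{\mathcal F}$ --- the whole expression collapses to a single integral over a fundamental domain of $X^h$ for the $Z_h$-action. On the decomposition $N=N(\pi)\oplus\bigoplus_{0<\theta<\pi}N(\theta)$ of the normal bundle, the pointwise small-time asymptotics of $\mathrm{str}\,(e^{-tD_E^2})(x,hx)$ is given by Mehler's formula exactly as in the classical equivariant index theorem, the rotation angles $\theta$ (and the angle $\beta$ on the $spin^c$ line bundle $\ell$) producing the factors $\hat A_\theta(N(\theta))$ and $e^{(c_1+i\beta)/2}$; this pointwise step is identical to the one in \cite[Theorem 5.10 \& Theorem 6.1]{MR3454548}, which I would simply cite.

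The step I expect to be the main obstacle --- and the reason for the word ``modified'' above --- is the passage of the limit $t\to 0^+$ through the infinite sum $\sum_{g\in\langle h\rangle}$. This requires a dominated-convergence argument based on a uniform Gaussian estimate $\|e^{-tD_E^2}(x,gx)\|\leq C\,t^{-N}e^{-c\,d(x,gx)^2/t}$, the elementary bound $d(x,gx)\geq |g|-R_0$ for $x$ in a fixed fundamental domain of diameter $R_0$, and the polynomial count $\sharp\{g\in\langle h\rangle : |g|\leq R\}\leq CR^d$; if $d\geq\dim X$ one replaces $e^{-tD_E^2}$ by $t^{m}D_E^{2m}e^{-tD_E^2}$ for $m$ large, which has the same $h$-supertrace up to a term with controlled small-time behavior and which decays fast enough in $|g|$ to make the sum absolutely and uniformly convergent on $t\in(0,1]$. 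A secondary, purely bookkeeping matter is tracking the normalization constants (powers of $2\pi i$, and $\hat A$- versus Todd-type normalizations) so that the heat-kernel computation matches the formula in \eqref{eq:loc}.
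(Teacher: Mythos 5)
Your proposal is correct and takes essentially the same approach as the paper. The paper's own treatment is considerably terser: it observes that (i) $\ind_\Gamma(D_E)$ has a representative built from the heat kernel $e^{-\widetilde D_E^2}$ (citing Connes--Moscovici), (ii) by Proposition~\ref{prop:schwartz} such a representative lies in the Fr\'echet subalgebra $\mathscr A(\widetilde X, S\otimes E)^\Gamma$ on which $\tr_h$ is defined and continuous, and then (iii) the same computation as in Wang--Wang's Theorem~6.1 applies verbatim. You flesh out what the paper delegates to the citation --- the McKean--Singer $t$-independence, the localization to $X^h$ as $t\to 0^+$, and Mehler's formula for the pointwise asymptotics --- and you make explicit the dominated-convergence estimate needed to pass the $t\to 0^+$ limit through $\sum_{g\in\langle h\rangle}$ using the Gaussian decay and the polynomial count of $\langle h\rangle$, which is precisely the content that makes the weaker ``polynomial growth'' hypothesis sufficient. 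The one small imprecision in your write-up is justifying membership in the smooth dense subalgebra by ``its entries are $\Gamma$-invariant smoothing operators''; being a smoothing kernel is not by itself enough (the subalgebra requires bounded derivatives against $(\nDelta+1)^r$, i.e.\ off-diagonal decay), and the correct justification is the paper's Proposition~\ref{prop:schwartz} applied to the Schwartz function $e^{-x^2}$. Your Gaussian estimates later in the proposal implicitly contain this, so the substance is right, but the reference should be to Proposition~\ref{prop:schwartz} rather than to Lemma~\ref{lm:tr} alone.
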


Although B.-L. Wang and H. Wang made the  assumption that $\tr_h$ extends to a trace map on $C_r^\ast(\Gamma)$ in \cite[Theorem 5.10]{MR3454548}, we point out that it suffices to have the less restrictive assumption that  the conjugacy class $\langle h \rangle $ of $h\in \Gamma$ has polynomial growth. Indeed, the higher index class $\ind_\Gamma(D_E)$ can be represented by elements in terms of the heat kernel operator $e^{-(\widetilde D_E)^2}$ (see for example \cite[Page 356]{CM90}). Such a representative lies in $\mathscr A(\widetilde X, S\otimes E)^\Gamma$ (see Proposition $\ref{prop:schwartz}$ above). Now the same proof as in \cite[Theorem 6.1]{MR3454548} gives the above theorem.

\begin{remark}
The assumption that  the conjugacy class $\langle h \rangle $ has polynomial growth is only used to guarantee that  the trace map $\tr_h \colon \mathbb C\Gamma \to \mathbb C$ induces a linear map $\tr_h\colon K_0(C_r^\ast(\Gamma)) \to \mathbb C.$ On the other hand, the $L^2$-Lefschetz fixed point theorem continues to hold in complete generality, without any growth conditions on $\langle h \rangle$. Indeed, observe that $\ind_\Gamma(D_E)$ can be represented by elements with finite propagation. Now the local index formula can be calculated by using finite propagation speed methods such as those employed in  \cite[Theorem 3.4]{MR1254310}.  
\end{remark}

As an immediate consequence of the theorem above, we have the following corollary. 

\begin{definition}
	Let $\mathbb Q_\Gamma$ be the field extension of $\mathbb Q$ by the following set of roots of unity:
	\[  \{e^{\pi i/n} \mid \textup{ there exists $\alpha \in \Gamma$ such that the order of $\alpha$ is $n$}\}. \]
\end{definition}

\begin{corollary}\label{cor:alg}
	With the same notation as above, $\tr_h(\ind_\Gamma(D_E))$ is an algebraic number in $\mathbb Q_\Gamma$. If in addition $h$ has infinite order, then $\tr_h(\ind_\Gamma(D_E)) =0$.  
\end{corollary}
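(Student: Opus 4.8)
The plan is to read off algebraicity directly from the fixed-point formula \eqref{eq:loc}, and to handle the infinite-order case by a dimension-counting argument on the fixed-point set. First I would observe that if $h$ has infinite order, then its fixed-point set $X^h$ is empty: since the $\Gamma$-action on $X$ is proper, every stabilizer is finite, so no point of $X$ can be fixed by the infinite cyclic group generated by $h$. Hence the integrand on the right-hand side of \eqref{eq:loc} is supported on the empty set, and $\tr_h(\ind_\Gamma(D_E)) = 0$. This disposes of the second assertion.

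For the first assertion I would assume $h$ has finite order, say of order $m$, so that $e^{i\theta}$, $e^{i\pi}$ and $e^{i\beta}$ are all $m$-th roots of unity (the rotation angles $\theta$ of $d_h$ on the normal bundle, and the angle $\beta$ of its action on the $spin^c$ line bundle $\ell$, are integer multiples of $2\pi/m$). The key point is that the right-hand side of \eqref{eq:loc} is a sum of integrals over the (compact, because $Z_h$ acts cocompactly on $X^h$) fundamental domain $\mathcal F$ of products of characteristic forms. The characteristic classes $\hat A(X^h)$, $\mathrm{ch}(E)$, $c_1$ are represented by closed forms with \emph{rational} (in fact, for $\hat A$ and $\mathrm{ch}$, rational-coefficient polynomial in Pontryagin/Chern) universal expressions; integrating a product of such forms against the fundamental class of a closed manifold (more precisely, the cocompact quotient $X^h/Z_h$) yields a \emph{rational} number by the integrality/rationality of characteristic numbers. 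The only non-rational ingredients are the explicit constants $e^{i\theta}$, $e^{i\pi/2}$-type factors and $e^{i\beta/2}$ appearing in $\hat A_\theta(N(\theta))$, $\hat A_\pi(N(\pi))$ and in $e^{(c_1+i\beta)/2}$: expanding these, one sees that the whole expression \eqref{eq:loc} lies in the ring generated over $\mathbb Q$ by $e^{i\theta/2}$ for $\theta \in (0,\pi]$ with $e^{i\theta}$ an $m$-th root of unity, and by $e^{i\beta/2}$. Each such $e^{i\theta/2}$ is a $2m$-th root of unity, hence of the form $e^{\pi i k /m}$ for some integer $k$, so it lies in $\mathbb Q_\Gamma$ since $m$ is the order of $h \in \Gamma$. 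Therefore $\tr_h(\ind_\Gamma(D_E)) \in \mathbb Q_\Gamma$, which is by construction a subfield of $\overline{\mathbb Q}$, so the value is algebraic.

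The main obstacle, and the step requiring the most care, is the bookkeeping that turns the $\theta$-dependent denominators $\frac{1}{\sinh\frac12(x_j+i\theta)}$ and $\frac{1}{\cosh(x_j/2)}$ into genuine elements of $\mathbb Q_\Gamma$ rather than just $\mathbb C$: one must expand $\frac{e^{\frac12(x_j+i\theta)}}{e^{x_j+i\theta}-1}$ as a power series in the nilpotent class $x_j$ (nilpotent because $X^h$ is finite-dimensional) and check that every coefficient, after the form integration, is a $\mathbb Q$-linear combination of products of the roots of unity $e^{i\theta/2}$ — equivalently, that no transcendental denominator survives. Concretely one uses that $e^{x_j+i\theta}-1$ has constant term $e^{i\theta}-1 \neq 0$ (valid precisely because $\theta \neq 0$, i.e. $N(\theta)$ is the part of the normal bundle on which $d_h$ acts nontrivially), so the geometric-series inversion is legitimate and produces only polynomial expressions in $e^{\pm i\theta}$, hence in $e^{\pm i\theta/2}$ up to sign; similarly for the $\cosh$ term at $\theta=\pi$. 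Once this is in place, the rationality of characteristic numbers over a closed manifold (or cocompact quotient) and the finiteness of the sum over the $\theta$'s finish the argument.
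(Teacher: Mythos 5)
Your proof is correct and follows essentially the same route as the paper: derive the vanishing when $h$ has infinite order from emptiness of $X^h$ (properness of the action), and for $h$ of finite order read off from the fixed-point formula \eqref{eq:loc} that the only non-rational ingredients are roots of unity $e^{i\theta/2}$, $e^{i\beta/2}$ lying in $\mathbb Q_\Gamma$. The paper leaves the characteristic-class bookkeeping implicit where you spell it out, but the argument is the same.
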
 
\begin{proof}
	Suppose $h$ has finite order $n$, then the possible values for $\theta$ and $\beta$ that appear in the formula  $\eqref{eq:loc}$ are  $(2k\pi/n)$ for $0\leq k < [n/2]$. It follows that the possible values for $\theta/2$ and $\beta/2$  are $(k\pi/n)$ for $0\leq k < [n/2]$.  This proves the first part.	Now  if $h$ has infinite order, then the fixed point set of $h$ is empty, since the action of $\Gamma$ on $X$ is proper. It follows that $\tr_h(\ind_\Gamma(D_E)) =0$. 
\end{proof}

 \nocite{MR1707352, MR3590536}

\end{document}